\newcommand{\RR}{\mathbb{R}}
\newtheorem{theorem}{Theorem}[section]
\newtheorem{lemma}[theorem]{Lemma}
\newtheorem{proposition}[theorem]{Proposition}
\newtheorem{corollary}[theorem]{Corollary}
\newtheorem{definition}[theorem]{Definition}
\newtheorem{remark}[theorem]{Remark}
\DeclareMathOperator{\diam}{diam}
\newcommand{\spb}[1]{\smallskip}
\newcommand{\mpb}[1]{\medskip}
\newcommand{\bpb}[1]{\bigskip}
\newcommand{\p}{\partial}
\renewcommand{\a}{\alpha}
\renewcommand{\b}{\beta}
\newcommand{\e}{\varepsilon}
\renewcommand{\d}{\delta}
\newcommand{\g}{\gamma}
\newcommand{\G}{\Gamma}
\newcommand{\s}{\sigma}
\begin{document}
\DeclareGraphicsExtensions{.jpg,.pdf,.mps,.png}

\title{Hyperbolicity of direct products of graphs}
\author[Walter Carballosa]{Walter Carballosa$^{(1)}$}
\address{National Council of Science and Technology (CONACYT) $\&$ Autonomous University of Zacatecas,
Paseo la Bufa, int. Calzada Solidaridad, 98060 Zacatecas, ZAC, M\'exico}
\email{waltercarb@gmail.com}
\thanks{$^{(1)}$  Supported in part by two grants from Ministerio de Econom{\'\i}a y Competititvidad (MTM2013-46374-P and MTM2015-69323-REDT), Spain.}

\author[Amauris de la Cruz]{Amauris de la Cruz$^{(1)}$}
\address{Departamento de Matem\'aticas, Universidad Carlos III de Madrid,
Avenida de la Universidad 30, 28911 Legan\'es, Madrid, Spain}
\email{alcruz@math.uc3m.es}

\author[Alvaro Mart\'{\i}nez-P\'erez]{Alvaro Mart\'{\i}nez-P\'erez$^{(2)}$}
\address{ Facultad CC. Sociales de Talavera,
Avda. Real Fábrica de Seda, s/n. 45600 Talavera de la Reina, Toledo, Spain}
\email{alvaro.martinezperez@uclm.es}
\thanks{$^{(2)}$ Supported in part by a grant
from Ministerio de Econom{\'\i}a y Competitividad (MTM 2012-30719), Spain.
}

\author[Jos\'e M. Rodr{\'\i}guez]{Jos\'e M. Rodr{\'\i}guez$^{(1)(3)}$}
\address{Departamento de Matem\'aticas, Universidad Carlos III de Madrid,
Avenida de la Universidad 30, 28911 Legan\'es, Madrid, Spain}
\email{jomaro@math.uc3m.es}
\thanks{$^{(3)}$ Supported in part by a grant from CONACYT (FOMIX-CONACyT-UAGro 249818), M\'exico.}

\date{\today}

\maketitle{}

\begin{abstract}
If $X$ is a geodesic metric space and $x_1,x_2,x_3\in X$, a {\it
geodesic triangle} $T=\{x_1,x_2,x_3\}$ is the union of the three
geodesics $[x_1x_2]$, $[x_2x_3]$ and $[x_3x_1]$ in $X$. The space
$X$ is $\d$-\emph{hyperbolic} $($in the Gromov sense$)$ if any side
of $T$ is contained in a $\d$-neighborhood of the union of the two
other sides, for every geodesic triangle $T$ in $X$.
If $X$ is hyperbolic, we denote by
$\d(X)$ the sharp hyperbolicity constant of $X$, i.e.,
$\d(X)=\inf\{\d\ge 0: \, X \, \text{ is $\d$-hyperbolic}\,\}.$
Some previous works characterize the hyperbolic product graphs (for the Cartesian, strong, join, corona and lexicographic products) in terms of properties of the factor graphs.
However, the problem with the direct product is more complicated.
In this paper, we prove that if the direct product $G_1\times G_2$ is hyperbolic, then one factor is hyperbolic and the other one is bounded.
Also, we prove that this necessary condition is, in fact, a characterization in many cases.
In other cases, we find characterizations which are not so simple.
Furthermore, we obtain formulae or good bounds for the hyperbolicity constant of the direct product of some important graphs.
\end{abstract}

{\it Keywords:}  Direct product of graphs; Geodesics; Gromov hyperbolicity.

{\it AMS Subject Classification numbers:}   05C69;  05A20; 05C50.

\section{Introduction}

The different kinds of products of graphs are an important research topic.
Some large graphs are composed from some existing smaller ones by using several products of graphs,
and many properties of such large graphs are strongly associated with that of the corresponding smaller ones.
In particular, given two graphs $G_1,G_2$, the \emph{direct product} $G_1 \times G_2$ is the graph with the vertex set $V (G_1 \times G_2)$,
and such that two vertices $(u_1,v_1)$ and $(u_2,v_2)$ of $G_1\times G_2$ are adjacent if $[u_1,u_2]\in E(G_1)$ and $[v_1,v_2]\in E(G_2)$.
The direct product is clearly commutative and associative.
Weichsel observed that $G_1 \times G_2$
is connected if and only if $G_1$ and $G_2$ are connected and $G_1$ or $G_2$ is not a bipartite graph \cite{W}.
Many different properties of direct product of graphs have been studied (sometimes with various different names,
such as cardinal product, tensor product, Kronecker product, categorical product,
conjunction,...). The study includes structural results \cite{BH, BIKB, H1, IR, IS, JK}, hamiltonian properties \cite{BP, KK}, and above all the well-known Hedetniemi's
conjecture on chromatic number of direct product of two graphs (see \cite{IK00} and \cite{Z}).
Open problems in the area suggest that a deeper structural understanding of
this product would be welcome.

Hyperbolic spaces play an important role in geometric
group theory and in the geometry of negatively curved
spaces (see \cite{ABCD, GH, G1}).
The concept of Gromov hyperbolicity grasps the essence of negatively curved
spaces like the classical hyperbolic space, simply connected Riemannian manifolds of
negative sectional curvature bounded away from $0$, and of discrete spaces like trees
and the Cayley graphs of many finitely generated groups. It is remarkable
that a simple concept leads to such a rich
general theory (see \cite{ABCD, GH, G1}).

The first works on Gromov hyperbolic spaces deal with
finitely generated groups (see \cite{G1}).
Initially, Gromov spaces were applied to the study of automatic groups in the science of computation
(see, e.g., \cite{O}); indeed, hyperbolic groups are strongly geodesically automatic, i.e., there is an automatic structure on the group \cite{Cha}.
The concept of hyperbolicity appears also in discrete mathematics, algorithms
and networking. For example, it has been shown empirically
in \cite{ShTa} that the internet topology embeds with better accuracy
into a hyperbolic space than into a Euclidean space
of comparable dimension (formal proofs that the distortion is related to the hyperbolicity can be found in \cite{VeSu});
furthermore, it is evidenced that many real networks are hyperbolic (see, e.g., \cite{AAD,ASM,CoCoLa,KPKVB,MoSoVi}).
Another important application of these spaces is the study of the spread of viruses through the internet (see \cite{K21,K22}).
Furthermore, hyperbolic spaces are useful in secure transmission of information on the
network (see \cite{
K21,K22});
also to traffic flow and effective resistance of networks \cite {CDV,GJ,LiTu}.
The hyperbolicity has also been used extensively in the context of random graphs (see, e.g., \cite{Sha1,Sha2,Sha3}).

In \cite{T} it was proved the
equivalence of the hyperbolicity of many negatively curved surfaces
and the hyperbolicity of a graph related to it; hence, it is useful
to know hyperbolicity criteria for graphs from a geometrical viewpoint.
Hence, the study of Gromov hyperbolic graphs is a subject of increasing interest; see, e.g.,
\cite{AAD,ASM,BRS,
BRSV2,BKM,CoCoLa,
K21,K22,MP,MRSV2,RSVV,Sha1,Sha2,Sha3,T,VeSu,WZ} and the references therein.

We say that a curve $\g:[a,b] \rightarrow X$ in a metric space $X$ is a
\emph{geodesic} if we have $L(\g|_{[t,s]})=d(\g(t),\g(s))=|t-s|$ for every $s,t\in [a,b]$,
where $L$ and $d$ denote length and distance, respectively, and $\g|_{[t,s]}$ is the restriction of the curve $\g$ to the interval $[t,s]$
(then $\gamma$ is equipped with an arc-length parametrization).
The metric space $X$ is said \emph{geodesic} if for every couple of points in
$X$ there exists a geodesic joining them; we denote by $[xy]$
any geodesic joining $x$ and $y$; this notation is ambiguous, since in general we do not have uniqueness of
geodesics, but it is very convenient.
Consequently, any geodesic metric space is connected.
If the metric space $X$ is
a graph, then the edge joining the vertices $u$ and $v$ will be denoted by $[u,v]$.

In order to consider a graph $G$ as a geodesic metric space, identify (by an isometry)
any edge $[u,v]\in E(G)$ with the interval $[0,1]$ in the real line;
then the edge $[u,v]$ (considered as a graph with just one edge)
is isometric to the interval $[0,1]$.
Thus, the points in $G$ are the vertices and, also, the points in the interior
of any edge of $G$.
In this way, any connected graph $G$ has a natural distance
defined on its points, induced by taking shortest paths in $G$,
and we can see $G$ as a metric graph.
If $x,y$ are in different connected components of $G$, we define $d_G(x,y)=\infty$.

Throughout this paper, $G=(V,E)=(V(G),E(G))$ denotes a connected simple (without loops and multiple edges) graph such that every edge has length $1$
and $V\neq \emptyset$.
These properties guarantee that $G$ is a geodesic metric space.
Note that to exclude multiple
edges and loops is not an important loss of generality, since
\cite[Theorems 8 and 10]{BRSV2} reduce the problem of computing
the hyperbolicity constant of graphs with multiple edges and/or
loops to the study of simple graphs.

If $X$ is a geodesic metric space and $x_1,x_2,x_3\in X$, the union
of three geodesics $[x_1 x_2]$, $[x_2 x_3]$ and $[x_3 x_1]$ is a
\emph{geodesic triangle} that will be denoted by $T=\{x_1,x_2,x_3\}$
and we will say that $x_1,x_2$ and $x_3$ are the vertices of $T$; it
is usual to write also $T=\{[x_1x_2], [x_2x_3], [x_3x_1]\}$. We say
that $T$ is $\d$-{\it thin} if any side of $T$ is contained in the
$\d$-neighborhood of the union of the two other sides. We denote by
$\d(T)$ the sharp thin constant of $T$, i.e., $ \d(T):=\inf\{\d\ge 0:
\, T \, \text{ is $\d$-thin}\,\}. $ The space $X$ is
$\d$-\emph{hyperbolic} $($or satisfies the {\it Rips condition} with
constant $\d)$ if every geodesic triangle in $X$ is $\d$-thin. We
denote by $\d(X)$ the sharp hyperbolicity constant of $X$, i.e.,
$\d(X):=\sup\{\d(T): \, T \, \text{ is a geodesic triangle in
}\,X\,\}.$
We say that $X$ is
\emph{hyperbolic} if $X$ is $\d$-hyperbolic for some $\d \ge 0$; then $X$ is hyperbolic if and only if
$ \d(X)<\infty$.
If we have a triangle with two
identical vertices, we call it a ``bigon". Obviously, every bigon in
a $\d$-hyperbolic space is $\d$-thin.
If $X$ has connected components $\{X_i\}_{i\in I}$, then we define $\d(X):=\sup_{i\in I} \d(X_i)$, and we say that $X$ is hyperbolic if $\d(X)<\infty$.

In the classical references on this subject (see, \emph{e.g.}, \cite{ABCD,BHB,GH})
appear several different definitions of Gromov hyperbolicity, which are equivalent in the sense
that if $X$ is $\d$-hyperbolic with respect to one definition,
then it is $\d'$-hyperbolic with respect to another definition (for some $\d'$ related to $\d$).
The definition that we have chosen has a deep geometric meaning (see, \emph{e.g.}, \cite{GH}).

We want to remark that the main examples of hyperbolic graphs are the trees.
In fact, the hyperbolicity constant of a geodesic metric space can be viewed as a measure of
how ``tree-like'' the space is, since those spaces $X$ with $\delta(X) = 0$ are precisely the metric trees.
This is an interesting subject since, in
many applications, one finds that the borderline between tractable and intractable
cases may be the tree-like degree of the structure to be dealt with
(see, e.g., \cite{CYY}).

For a finite graph with $n$ vertices it is possible to compute $\d(G)$ in time $O(n^{3.69})$ \cite{FIV} (this is improved in \cite{CoCoLa,CD}).
Given a Cayley graph (of a presentation with solvable word problem) there is an algorithm which allows to decide if it is hyperbolic \cite{Pap}.
However, deciding whether or not a general infinite graph is hyperbolic is usually very difficult.
Thus, a way to approach the problem is to study hyperbolicity for particular types of graphs.
In this line, many researches have studied the hyperbolicity of several classes of graphs: chordal graphs \cite{BCRS,BKM,MP,WZ},
vertex-symmetric graphs \cite{CaFu}, bipartite and intersection graphs \cite{CoDu}, bridged graphs \cite{K56}, expanders \cite{LiTu}
and some products of graphs: Cartesian product \cite{MRSV2}, strong product \cite{CCCR}, corona and join product \cite{CRS1}.

In this paper we characterize in many cases the hyperbolic direct product of graphs.
Here the situation is more complex than with the Cartesian or the strong product, which is in part due to the
facts that the direct product of two bipartite graphs is already disconnected and that the formula for the distance in $G_1\times G_2$
is more complicated that in the case of other products of graphs.
Theorem \ref{t:neccon} proves that if $G_1\times G_2$ is hyperbolic, then one factor is hyperbolic and the other one is bounded.
Also, we prove that this necessary condition is, in fact, a characterization in many cases.
If $G_1$ is a hyperbolic graph and $G_2$ is a bounded graph, then we prove that $G_1\times G_2$ is hyperbolic when
$G_2$ has some odd cycle (Theorem \ref{t:G2odd}) or $G_1$ and $G_2$ do not have odd cycles (Theorem \ref{t:no-odd}).
Otherwise, the characterization is a more difficult task;
if $G_1$ has some odd cycle and $G_2$ does not have odd cycles, Theorems \ref{t:P2} and \ref{t:dense}
provide sufficient conditions for non-hyperbolicity and hyperbolicity, respectively;
besides, Theorems \ref{t:char} and Corollary \ref{c:char}
characterize the hyperbolicity of $G_1\times G_2$ under some additional conditions.
Furthermore, we obtain formulae or good bounds for the hyperbolicity constant of the direct product of some important graphs
(in particular, Theorem \ref{t:bipartite} provides the precise value of the hyperbolicity constant for many direct products of bipartite graphs).


\
\section{Hyperbolic direct products}

In order to study the hyperbolicity constant of the direct product of two graphs $G_1\times G_2$, we will need bounds for the distance between two arbitrary points.
We will use the definition given in \cite{HIK}.

\begin{definition}\label{def:DP}
Let $G_1=(V(G_1),E(G_1))$ and $G_2=(V(G_2),E(G_2))$ be two graphs. The \emph{direct product} $G_1\times G_2$ of $G_1$ and $G_2$ has $V(G_1) \times V(G_2)$ as vertex set, so that two distinct vertices $(u_1,v_1)$ and $(u_2,v_2)$ of $G_1\times G_2$ are adjacent if $[u_1,u_2]\in E(G_1)$ and $[v_1,v_2]\in E(G_2)$.
\end{definition}

If $G_1$ and $G_2$ are isomorphic, we write $G_1 \simeq G_2$. It is clear that if $G_1\simeq G_2$, then $\d(G_1)=\d(G_2)$.

From the definition, it follows that the direct product of two graphs is commutative, i.e., $G_1\times G_2\simeq G_2\times G_1$.
Hence, the conclusion of every result in this paper with some ``non-symmetric" hypothesis also holds if we change the roles of $G_1$ and $G_2$ (see, e.g., Theorems \ref{t:G2odd}, \ref{t:no-odd}, \ref{t:P2}, \ref{t:dense}
and \ref{t:char} and Corollary \ref{c:char}).

In what follows we denote by $\pi_i$ the projection $\pi_i:V(G_1\times G_2)\rightarrow V(G_i)$ for $i\in\{1,2\}$.
Note that, in fact, this projection is well defined as a map $\pi_i: G_1\times G_2 \rightarrow  G_i$ for $i\in\{1,2\}$.

\smallskip

We collect some previous results of \cite{HIK}, which will be useful.
If $G$ is a graph and $u, u'\in V(G)$, then by a $u, u'$-\emph{walk} in $G$ we mean a path joining $u$ and $u'$ where repeating vertices is allowed.

\begin{proposition}\cite[Proposition 5.7]{HIK}
\label{p:distvert}
Suppose $(u,v)$ and $(u',v')$ are vertices of the direct product $G_1\times G_2$, and $n$ is an integer for which $G_1$ has a $u, u'$-walk of length $n$ and $G_2$ has a $v, v'$-walk of length $n$. Then $G_1\times G_2$ has a walk of length $n$ from $(u,v)$ to $(u',v')$. The smallest such $n$ (if it exists) equals $d_{G_1\times G_2}((u,v),(u',v'))$. If no such $n$ exists, then $d_{G_1\times G_2}((u,v),(u',v'))= \infty$.
\end{proposition}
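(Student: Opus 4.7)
The statement has three parts to establish: existence of a walk of length $n$ in the product from walks of length $n$ in the factors; identification of the smallest such $n$ with the product distance; and the infinite case.

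The plan is to exploit the fact that an edge of $G_1\times G_2$ is, by Definition \ref{def:DP}, a pair $[(u_1,v_1),(u_2,v_2)]$ with $[u_1,u_2]\in E(G_1)$ and $[v_1,v_2]\in E(G_2)$. Thus the projections $\pi_1,\pi_2$ send each edge of $G_1\times G_2$ to an edge of $G_1$ or $G_2$ respectively (not to a vertex), which will let walks of the same length pass in both directions between $G_1\times G_2$ and the factor pair $(G_1,G_2)$.

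First, I would handle the forward construction. Given walks $u=u_0,u_1,\dots,u_n=u'$ in $G_1$ and $v=v_0,v_1,\dots,v_n=v'$ in $G_2$, define the sequence $(u_0,v_0),(u_1,v_1),\dots,(u_n,v_n)$ in $G_1\times G_2$. For each consecutive pair, $[u_i,u_{i+1}]\in E(G_1)$ and $[v_i,v_{i+1}]\in E(G_2)$ by hypothesis, so the definition of the direct product yields $[(u_i,v_i),(u_{i+1},v_{i+1})]\in E(G_1\times G_2)$. This produces a walk of length exactly $n$ from $(u,v)$ to $(u',v')$, and in particular $d_{G_1\times G_2}((u,v),(u',v'))\le N$, where $N$ is the smallest $n$ for which paired walks of the same length exist in the factors.

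For the reverse inequality and the infinite case, I would argue via projection. Suppose there is any walk $W=(u_0,v_0),(u_1,v_1),\dots,(u_m,v_m)$ of length $m$ in $G_1\times G_2$ from $(u,v)$ to $(u',v')$. Then $\pi_1(W)=u_0,u_1,\dots,u_m$ is a $u,u'$-walk in $G_1$ of length $m$, and $\pi_2(W)=v_0,v_1,\dots,v_m$ is a $v,v'$-walk in $G_2$ of length $m$, because each edge of $G_1\times G_2$ projects to an edge in each factor. Hence any $m$ realized as a walk length in the product is realized as a common walk length in the factors; this gives $m\ge N$, so $d_{G_1\times G_2}((u,v),(u',v'))\ge N$, completing the equality when $N$ exists. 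If no such $n$ exists, the same projection argument shows that no walk in $G_1\times G_2$ can connect $(u,v)$ to $(u',v')$, i.e., $d_{G_1\times G_2}((u,v),(u',v'))=\infty$.

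There is no real obstacle here; the only point that requires a moment's attention is verifying that $\pi_i$ sends an edge to an edge (and not, say, to a single vertex), which is immediate from Definition \ref{def:DP} since adjacency in the direct product forces distinct coordinates in each factor joined by a factor edge. Everything else is a direct application of the definitions, so the proof should be short and clean.
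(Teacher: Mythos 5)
Your proof is correct. Note that the paper itself gives no proof of this statement: it is quoted verbatim from \cite[Proposition 5.7]{HIK}, so there is nothing internal to compare against. Your argument (zip the two factor walks together for the upper bound, project any product walk edge-by-edge onto each factor for the lower bound and the infinite case, observing that an edge of $G_1\times G_2$ projects to an edge in each factor because the graphs are simple) is the standard one and is exactly what the cited reference does.
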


\begin{proposition}\cite[Proposition 5.8]{HIK} \label{p:distvert1}
Suppose $x$ and $y$ are vertices of $G_1\times G_2$. Then
$$
d_{G_1\times G_2}(x, y) = \min\big\{n \in \mathbb{N}\,| \text{ each factor }  G_i  \text{ has a $\pi_{i}(x), \pi_{i}(y)$-walk of length $n$ for }  i=1,2\big\},
$$
where it is understood that $d_{G_1 \times G_2}(x,y)=\infty$ if no such $n$ exists.
\end{proposition}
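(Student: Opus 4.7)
The plan is to derive Proposition \ref{p:distvert1} as a direct consequence of Proposition \ref{p:distvert}, establishing the equality by proving two opposite inequalities between $d_{G_1\times G_2}(x,y)$ and the minimum on the right-hand side. Denote by $N(x,y)$ the set of natural numbers $n$ such that each $G_i$ admits a $\pi_i(x),\pi_i(y)$-walk of length $n$.

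For the inequality $d_{G_1\times G_2}(x,y)\le \min N(x,y)$, I would take any $n\in N(x,y)$ and apply Proposition \ref{p:distvert} directly: since $G_1$ has a $\pi_1(x),\pi_1(y)$-walk of length $n$ and $G_2$ has a $\pi_2(x),\pi_2(y)$-walk of length $n$, the proposition provides a walk of length $n$ from $x$ to $y$ in $G_1\times G_2$. Hence $d_{G_1\times G_2}(x,y)\le n$, and taking the minimum over $n\in N(x,y)$ gives the desired bound.

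For the reverse inequality, assume $n_0:=d_{G_1\times G_2}(x,y)<\infty$ and fix a geodesic $x=w_0,w_1,\dots,w_{n_0}=y$ joining $x$ and $y$ in $G_1\times G_2$ (which is in particular a walk of length $n_0$). The key observation is that if two vertices $w_j,w_{j+1}$ of $G_1\times G_2$ are adjacent, then by Definition \ref{def:DP} the pair $[\pi_i(w_j),\pi_i(w_{j+1})]$ lies in $E(G_i)$ for $i=1,2$; in particular $\pi_i(w_j)\ne \pi_i(w_{j+1})$. Consequently $\pi_i(w_0),\pi_i(w_1),\dots,\pi_i(w_{n_0})$ is a $\pi_i(x),\pi_i(y)$-walk of length exactly $n_0$ in $G_i$, so $n_0\in N(x,y)$ and $\min N(x,y)\le n_0$. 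Combining both inequalities yields the equality. Finally, if $N(x,y)=\emptyset$, then in particular no integer $n$ simultaneously satisfies the hypothesis of Proposition \ref{p:distvert}, so there is no walk at all between $x$ and $y$ and $d_{G_1\times G_2}(x,y)=\infty$, matching the convention stated.

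There is essentially no real obstacle: Proposition \ref{p:distvert} already contains the nontrivial content (the construction of a walk in the product from coordinated walks in the factors, and the optimality of such $n$), so Proposition \ref{p:distvert1} is merely a reformulation in minimum form. The only point deserving explicit mention is the verification that projections of walks in $G_1\times G_2$ are walks of the \emph{same} length in each factor, which is immediate from the definition of adjacency in the direct product.
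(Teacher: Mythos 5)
Your proposal is correct. The paper does not prove this statement itself --- it is quoted verbatim from \cite[Proposition 5.8]{HIK}, and indeed the second half of Proposition \ref{p:distvert} already asserts exactly the claimed equality --- but your derivation is sound: the inequality $d_{G_1\times G_2}(x,y)\le\min N(x,y)$ follows from the walk-construction part of Proposition \ref{p:distvert}, the reverse inequality follows from the (correctly identified and justified) observation that each projection of a walk in $G_1\times G_2$ is a walk of the same length in the corresponding factor, and the $\infty$ case is handled consistently. This matches the standard argument given in the cited handbook.
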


\begin{definition}\label{def:diam}
 The diameter of the vertices of the graph $G$, denoted by $\diam V(G)$, is defined as
    $$\diam V(G):= \sup \{d_{G}(u,v): u,v\in V(G)\},$$
 and the diameter of the graph $G$, denoted by $\diam G$, is defined as
$$\diam G:= \sup \{d_{G}(x,y): x,y\in G\}.$$
\end{definition}

\begin{corollary} \label{c:lower}
We have for every $(u,v),(u',v') \in V(G_1\times G_2)$
$$
d_{G_1\times G_2}((u,v),(u',v')) \ge \max\big\{ d_{G_1}(u,u'),\, d_{G_2}(v,v') \big\}
$$
and, consequently,
$$
\diam V(G_1\times G_2) \ge \max\big\{ \diam V(G_1),\, \diam V(G_2) \big\}.
$$
\medskip

Furthermore, if $d_{G_1}(u,u')$ and $d_{G_2}(v,v')$ have the same parity, then
$$
d_{G_1\times G_2}((u,v),(u',v')) = \max\big\{ d_{G_1}(u,u'),\, d_{G_2}(v,v') \big\}
$$
and, consequently,
$$
\diam V(G_1\times G_2) = \max\big\{ \diam V(G_1),\, \diam V(G_2) \big\}.
$$
\end{corollary}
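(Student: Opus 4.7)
The plan is to derive both parts from Proposition \ref{p:distvert1}, which identifies $d_{G_1\times G_2}((u,v),(u',v'))$ with the smallest common walk length in the two factors.

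First I would establish the lower bound. Set $a := d_{G_1}(u,u')$, $b := d_{G_2}(v,v')$ and $n := d_{G_1\times G_2}((u,v),(u',v'))$. By Proposition \ref{p:distvert1}, $G_1$ admits a $u,u'$-walk of length $n$ and $G_2$ admits a $v,v'$-walk of length $n$ (the case $n=\infty$ is trivial). Since the length of any walk between two vertices is at least the distance between them, we get $n \ge a$ and $n \ge b$, whence $n \ge \max\{a,b\}$. Taking the supremum over $(u,v),(u',v')\in V(G_1\times G_2)$ yields the first diameter inequality.

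Next I would prove the equality under the same-parity hypothesis. Assume without loss of generality $a \le b$; since $a$ and $b$ have the same parity, $b - a = 2k$ for some $k\ge 0$. Fix a geodesic $u = w_0, w_1, \ldots, w_a = u'$ in $G_1$. If $a\ge 1$, extend it to a walk of length $a + 2k = b$ by traversing the edge $[w_{a-1},w_a]$ back and forth $k$ times, ending at $u'$; if $a = 0$ and $b = 0$ the constant walk suffices, and if $a = 0 < b$ we pick any neighbor $w$ of $u$ in $G_1$ (which exists, since $G_1$ is connected and nontrivial, otherwise the right-hand side forces $v=v'$ and there is nothing to prove) and traverse $[u,w]$ back and forth $k$ times. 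In either case, $G_1$ has a $u,u'$-walk of length $b$, while $G_2$ has the $v,v'$-geodesic of length $b$. Proposition \ref{p:distvert1} then gives $d_{G_1\times G_2}((u,v),(u',v')) \le b = \max\{a,b\}$, and combined with the already proved lower bound we obtain equality.

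Finally, the diameter equality follows by taking suprema: the lower bound part was handled above, while for the upper bound we use that the sup defining $\diam V(G_1\times G_2)$ is realized (at least as a limit) by pairs whose factor distances have the same parity, since the right-hand side $\max\{\diam V(G_1),\diam V(G_2)\}$ bounds every finite value $\max\{a,b\}$ produced by a same-parity pair. The only delicate point, and the one worth flagging, is the extension step above: it requires that the factor containing the shorter geodesic has an edge incident to the relevant endpoint, which is guaranteed by the standing connectedness (and non-triviality) assumption on the graphs throughout the paper. Everything else reduces to bookkeeping with Proposition \ref{p:distvert1}.
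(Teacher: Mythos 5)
Your argument is correct and is exactly the derivation the paper intends: the corollary is stated without proof as an immediate consequence of Proposition \ref{p:distvert1}, and your two steps (any walk between two vertices has length at least their distance, giving the lower bound; padding the shorter geodesic by back-and-forth traversals of a terminal edge to equalize the two lengths when the parities agree, giving the upper bound) are the standard justification. The only delicate point --- that the padding step needs the relevant factor to have an edge at the endpoint, i.e.\ to be non-trivial --- is one you already flag, and it is implicit in the paper's standing assumptions.
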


In this paper by trivial graph we mean a graph having just a single vertex.

The following theorem, first proved by Weichsel in $1962$, characterizes connectedness in direct products of two factors.
As usual, by \emph{cycle} we mean a simple closed curve, i.e., a path with different vertices,
unless the last one, which is equal to the first vertex.

\begin{theorem}\cite[Theorem 5.9]{HIK} \label{t:conect}
Suppose $G_1$ and $G_2$ are connected non-trivial graphs. If at least one of $G_1$ or $G_2$ has an odd cycle,
 then $G_1\times G_2$ is connected. If both $G_1$ and $G_2$ are bipartite, then $G_1\times G_2$ has exactly two connected components.
\end{theorem}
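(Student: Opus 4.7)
The plan is to reduce connectedness in $G_1\times G_2$ to the existence of common walk lengths in the two factors, via Proposition \ref{p:distvert1}: two vertices $(u_1,v_1),(u_2,v_2)$ lie in the same component of $G_1\times G_2$ if and only if there is some $n\in\NN$ such that $G_1$ has a $u_1,u_2$-walk of length $n$ and $G_2$ has a $v_1,v_2$-walk of length $n$. Everything then reduces to understanding the set
$$W_G(u,u'):=\{n\in \NN: G\text{ has a } u,u'\text{-walk of length }n\}$$
for the connected factor graphs.

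The key auxiliary fact is a walk-length lemma with two parts: (i) If $G$ is connected and $u,u'\in V(G)$, then $W_G(u,u')\supseteq \{d_G(u,u')+2k:k\ge 0\}$, obtained by prepending back-and-forth traversals of an edge incident to $u$; in particular, in a bipartite graph, $W_G(u,u')$ consists of \emph{all} sufficiently large integers of one fixed parity (namely the parity of $d_G(u,u')$, determined by the bipartition classes of $u,u'$). (ii) If in addition $G$ has an odd cycle, then $W_G(u,u')$ contains all sufficiently large integers, obtained by optionally inserting a traversal of the odd cycle (together with a detour to and from it, whose length can be adjusted by $2$) to flip parity.

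For the first assertion, assume without loss of generality that $G_1$ has an odd cycle. Given arbitrary $(u_1,v_1),(u_2,v_2)\in V(G_1\times G_2)$, by (ii) the set $W_{G_1}(u_1,u_2)$ contains all sufficiently large integers, while by (i) $W_{G_2}(v_1,v_2)$ contains all sufficiently large integers of the parity of $d_{G_2}(v_1,v_2)$. Picking $n$ large with that parity gives $n\in W_{G_1}(u_1,u_2)\cap W_{G_2}(v_1,v_2)$, so Proposition \ref{p:distvert1} yields $d_{G_1\times G_2}((u_1,v_1),(u_2,v_2))\le n<\infty$, proving connectedness.

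For the second assertion, write $V(G_i)=A_i\sqcup B_i$ for the bipartitions, and set $C_1:=(A_1\times A_2)\cup(B_1\times B_2)$ and $C_2:=(A_1\times B_2)\cup(B_1\times A_2)$. Any edge of $G_1\times G_2$ changes both coordinates' bipartition class simultaneously, so edges connect $A_1\times A_2$ with $B_1\times B_2$ and $A_1\times B_2$ with $B_1\times A_2$; hence no edge crosses between $C_1$ and $C_2$. Within $C_1$ (and analogously within $C_2$), any two vertices $(u_1,v_1),(u_2,v_2)$ satisfy that $d_{G_1}(u_1,u_2)$ and $d_{G_2}(v_1,v_2)$ have the \emph{same} parity, since this parity is determined by the bipartition classes; part (i) of the lemma then provides a common sufficiently large $n$ in both walk-length sets, so by Proposition \ref{p:distvert1} these vertices are connected in $G_1\times G_2$. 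Non-triviality of $G_1,G_2$ ensures both $C_1$ and $C_2$ are non-empty, giving exactly two components. The main obstacle is the walk-length lemma, especially the odd-cycle parity-flipping argument in (ii); everything else is a bookkeeping exercise on parities of distances.
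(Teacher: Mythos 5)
The paper does not prove this statement at all: it is quoted verbatim from \cite[Theorem 5.9]{HIK} (Weichsel's theorem), so there is no in-paper proof to compare against. Your argument is correct and is essentially the standard proof of that cited result: reduce connectedness to the existence of a common walk length via Proposition \ref{p:distvert1}, note that in a connected non-trivial factor the set of $u,u'$-walk lengths contains all sufficiently large integers of the parity of $d_G(u,u')$ (and of both parities when an odd cycle is present), and for the bipartite case observe that every edge of $G_1\times G_2$ flips both bipartition classes, so the two classes $C_1,C_2$ are unions of components, each non-empty and internally connected by the matching-parity argument. The only hypotheses you use implicitly --- that every vertex has an incident edge and that both sides of each bipartition are non-empty --- do follow from the factors being connected and non-trivial, so there is no gap.
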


\begin{corollary}\cite[Corollary 5.10]{HIK}
\label{c:conect}
A direct product of connected non-trivial graphs is connected if and only if at most one of the factors is bipartite.
In fact, the product has $2^{\max\{k,1\}-1}$ connected components, where $k$ is the number of bipartite factors.
\end{corollary}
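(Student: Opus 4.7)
The plan is to prove the sharper statement that the number of components of $G_1\times\cdots\times G_n$ is exactly $2^{\max\{k,1\}-1}$; the characterization of connectedness then follows, since this number equals $1$ precisely when $k\le 1$. I would argue by induction on the number $n$ of factors, taking the base case $n=2$ directly from Theorem \ref{t:conect}.

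For the inductive step, write $G=H\times G_n$ with $H=G_1\times\cdots\times G_{n-1}$, and let $k'$ be the number of bipartite factors among $G_1,\dots,G_{n-1}$, so that $k=k'$ if $G_n$ is non-bipartite and $k=k'+1$ if $G_n$ is bipartite. By the inductive hypothesis $H$ has $2^{\max\{k',1\}-1}$ components, and since $V(G)=\bigsqcup_j V(H_j)\times V(G_n)$ as $H_j$ ranges over the components of $H$, it suffices to understand each $H_j\times G_n$ separately. The crucial auxiliary fact is: (i) every component of $H$ is bipartite whenever $k'\ge 1$, and (ii) the unique component of $H$ is non-bipartite whenever $k'=0$. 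Claim (i) is immediate: if $G_i$ is bipartite with $2$-coloring $c_i$, then the map $(v_1,\dots,v_{n-1})\mapsto c_i(v_i)$ is a valid $2$-coloring of each component of $H$, because every edge of the direct product requires an edge in $G_i$, which flips $c_i$.

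Given (i) and (ii), each component $H_j$ of $H$ is connected and non-trivial, so Theorem \ref{t:conect} applies to $H_j\times G_n$ and returns $1$ or $2$ components depending on whether at most one or both of $H_j,G_n$ is bipartite. A short case analysis on the four possibilities (whether $G_n$ is bipartite and whether $k'\ge 1$) yields in every case exactly $2^{\max\{k,1\}-1}$ components for $G$; for example, when both $G_n$ and some $G_i$ with $i<n$ are bipartite (so $k=k'+1\ge 2$), every $H_j$ is bipartite by (i) and each $H_j\times G_n$ splits into $2$ components, giving $2^{k'-1}\cdot 2=2^{k-1}$ in total. The remaining three cases are handled analogously and give the same answer.

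The main obstacle is claim (ii). I would prove it via the standard fact that in any connected graph containing an odd cycle, at each vertex $v$ there exist closed $v,v$-walks of every sufficiently large length and of either parity (by concatenating a path to the odd cycle, a suitable number of its traversals, and the reverse path). Applying this to each factor $G_i$ of $H$ (all non-bipartite, since $k'=0$) to produce closed walks of a common odd length at chosen vertices, and then invoking Proposition \ref{p:distvert}, yields a closed walk of odd length in $H$. Hence $H$ contains an odd cycle and is in particular non-bipartite, completing the verification of (ii) and the induction.
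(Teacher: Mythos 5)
The paper does not prove this corollary; it is quoted verbatim from \cite[Corollary 5.10]{HIK}, so there is no in-paper argument to compare against. Your proof is correct and is essentially the standard inductive derivation from Weichsel's two-factor theorem (Theorem \ref{t:conect}) that one finds in the cited reference: the component-wise decomposition $V(H\times G_n)=\bigsqcup_j V(H_j)\times V(G_n)$ is legitimate since every edge of $H\times G_n$ projects to an edge of $H$ and hence stays inside a single $H_j\times G_n$; claim (i) via the pulled-back $2$-coloring and claim (ii) via closed walks of a common large odd length in each factor (combined through the $n$-factor version of Proposition \ref{p:distvert}, or by iterating the two-factor version using associativity) are both sound; and the four-way case analysis does produce $2^{\max\{k,1\}-1}$ in each case. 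The only points you leave implicit are minor: each component $H_j$ is non-trivial because every vertex of a direct product of connected non-trivial graphs has a neighbour (choose a neighbour in each coordinate), which is needed before invoking Theorem \ref{t:conect} on $H_j\times G_n$; and a closed walk of odd length contains an odd cycle, which is the standard fact justifying the last line of your claim (ii).
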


\begin{proposition}\label{p:unbounded} Let $G_1$ and $G_2$ be two unbounded graphs. Then $G_1\times G_2$ is not hyperbolic.
\end{proposition}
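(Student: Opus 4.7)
The plan is to exhibit, for every even integer $n$, a geodesic triangle $T_n$ in $G_1\times G_2$ with $\d(T_n)\ge n/2-1$, which immediately forces $\d(G_1\times G_2)=\infty$. Since each factor is connected and unbounded, fix in $G_1$ a path on vertices $u_0,u_1,\dots,u_n$ with $d_{G_1}(u_0,u_n)=n$ (a geodesic segment) and in $G_2$ a path on vertices $v_0,v_1,\dots,v_n$ with $d_{G_2}(v_0,v_n)=n$.

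Consider the three vertices $A:=(u_0,v_0)$, $B:=(u_n,v_n)$ and $C:=(u_n,v_0)$. Their pairs of coordinate distances are $(n,n)$, $(n,0)$ and $(0,n)$, which have matching parities since $n$ is even; hence Corollary~\ref{c:lower} gives $d_{G_1\times G_2}(A,B)=d_{G_1\times G_2}(A,C)=d_{G_1\times G_2}(B,C)=n$, and in particular $A,B,C$ lie in a single connected component. I then fix three explicit geodesics: $\a_1$ with vertices $(u_j,v_j)$ for $0\le j\le n$ (the ``diagonal''); $\a_2$ with vertices $(u_j,v_{j\bmod 2})$, advancing along the $G_1$-geodesic while the second coordinate bounces along the edge $[v_0,v_1]$; and $\a_3$ with vertices $(u_{n-(j\bmod 2)},v_{n-j})$, descending the $G_2$-geodesic while the first coordinate bounces along $[u_{n-1},u_n]$. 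Each $\a_k$ has length $n$ and consists of legitimate edges of the direct product, so is a geodesic between its endpoints.

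Let $M:=(u_{n/2},v_{n/2})$ be the central vertex of $\a_1$; I claim $d_{G_1\times G_2}(M,\a_2\cup\a_3)\ge n/2-1$. Corollary~\ref{c:lower} shows that the coordinate projections $\pi_i$ are $1$-Lipschitz on vertex pairs, and since each edge of $G_1\times G_2$ maps isometrically onto an edge of each factor, $\pi_i$ extends to a $1$-Lipschitz map $G_1\times G_2\to G_i$. By construction $\pi_2(\a_2)\subseteq[v_0,v_1]$, and $v_{n/2}$ is at distance $n/2$ from $v_0$ and $n/2-1$ from $v_1$ along the chosen geodesic in $G_2$, so
\begin{equation*}
d_{G_1\times G_2}(M,\a_2) \;\ge\; d_{G_2}\bigl(v_{n/2},[v_0,v_1]\bigr) \;=\; \tfrac{n}{2}-1.
\end{equation*}
Symmetrically $\pi_1(\a_3)\subseteq[u_{n-1},u_n]$ yields $d_{G_1\times G_2}(M,\a_3)\ge n/2-1$, so $\d(T_n)\ge n/2-1$ for the triangle $T_n:=\{\a_1,\a_2,\a_3\}$. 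Letting $n\to\infty$ proves that $G_1\times G_2$ is not hyperbolic.

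The main subtlety lies in choosing a configuration that actually witnesses non-hyperbolicity. The parity constraints in Proposition~\ref{p:distvert1} and Corollary~\ref{c:lower} make many natural three- or four-point configurations degenerate; for instance, the ``rectangle'' $\{(u_0,v_0),(u_n,v_0),(u_0,v_n),(u_n,v_n)\}$ has all six pairwise distances equal to $n$ and therefore cannot witness non-hyperbolicity via the four-point condition. The bouncing construction for $\a_2$ and $\a_3$ bypasses this by compressing the projection of each of those two sides into a single edge of one factor, so that the $1$-Lipschitz projections force $M$ to stay far from both of them even though the triangle is equilateral.
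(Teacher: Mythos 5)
Your proof is correct and follows essentially the same strategy as the paper's: both build a geodesic triangle from one ``diagonal'' side and two sides whose projection to one factor is confined to a single edge, then use the projection lower bound of Corollary \ref{c:lower} to show that the midpoint of the diagonal is at distance roughly $n/2$ from the union of the other two sides. The differences are only cosmetic (even versus odd $n$, the placement of the corners, and your explicit use of the $1$-Lipschitz extension of $\pi_i$ to non-vertex points, which the paper sidesteps by estimating distances to vertices only).
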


\begin{proof} Since $G_1$ and $G_2$ are unbounded graphs, for each positive integer $n$ there exist two geodesic paths $P_1:=[w_1,w_2]\cup [w_2,w_3]\cup \cdots \cup [w_{n-1},w_n]$ in $G_1$ and $P_2:=[v_1,v_2]\cup [v_2,v_3]\cup \cdots \cup [v_{n-1},v_n]$ in $G_2$. If $n$ is odd, then we can consider the geodesic triangle $T$ in $G_1\times G_2$ defined by the following geodesics:
$$
\begin{aligned}
\gamma_1 & := [(w_1,v_2),(w_2,v_1)] \cup [(w_2,v_1),(w_3,v_2)] \cup  [(w_3,v_2),(w_4,v_1)] \cup  \cdots \cup [(w_{n-1},v_1),(w_n,v_2)],
\\
\gamma_2 & := [(w_1,v_2),(w_2,v_3)] \cup [(w_2,v_3),(w_1,v_4)] \cup  [(w_1,v_4),(w_2,v_5)] \cup \cdots \cup [(w_{1},v_{n-1}),(w_2,v_n)],
\\
\gamma_3 & := [(w_2,v_n),(w_3,v_{n-1})] \cup [(w_3,v_{n-1}),(w_4,v_{n-2})] \cup  [(w_4,v_{n-2}),(w_5,v_{n-3})] \cup \cdots \cup [(w_{n-1},v_{3}),(w_n,v_2)],
\end{aligned}
$$

Corollary \ref{c:lower} gives that $\gamma_1, \gamma_2, \gamma_3$ are geodesics.

Let $m:=\frac{n+1}{2}$ and consider the vertex $(w_{m},v_{m+1})$ in $\gamma_3$. For every vertex $(w_i,v_j)$ in $\gamma_1$, $j\in \{1,2\},$ we have $d_{G_1\times G_2}((w_m,v_{m+1}),(w_i,v_j))\ge d_{G_2}(v_{m+1},v_j)\ge m+1-2=\frac{n-1}{2}$ by Corollary \ref{c:lower}. We have for every vertex $(w_i,v_j)$ in $\gamma_2$, $i\in \{1,2\}$, by Corollary \ref{c:lower}, $d_{G_1\times G_2}((w_m,v_{m+1}),(w_i,v_j))\ge d_{G_1}(w_m,w_i)\ge m-2=\frac{n-3}{2}$. Hence, $d_{G_1\times G_2}\big((w_{m},v_{m+1}),\gamma_1\cup \gamma_2\big)\geq \frac{n-3}{2}$ and $\d(G_1\times G_2)\ge \d(T)\ge\frac{n-3}{2}$. Since $n$ is arbitrarily large, $G_1\times G_2$ is not hyperbolic.
\end{proof}

Let $(X,d_X)$ and $(Y,d_Y)$  be two metric spaces. A map $f: X\longrightarrow Y$ is said to be
an $(\alpha, \beta)$-\emph{quasi-isometric embedding}, with constants $\alpha\geq 1,\, \beta\geq 0$ if, for every $x, y\in X$:
$$
\alpha^{-1}d_X(x,y)-\beta\leq d_Y(f(x),f(y))\leq \alpha d_X(x,y)+\beta.
$$
The function $f$ is $\varepsilon$-\emph{full} if
for each $y \in Y$ there exists $x\in X$ with $d_Y(f(x),y)\leq \varepsilon$.

A map $f: X\longrightarrow Y$ is said to be
a \emph{quasi-isometry}, if there exist constants $\alpha\geq 1,\, \beta,\varepsilon \geq 0$ such that $f$ is an $\varepsilon$-full
$(\alpha, \beta)$-quasi-isometric embedding.

Two metric spaces $X$ and $Y$ are \emph{quasi-isometric} if there exists
a quasi-isometry $f:X\longrightarrow Y$.
One can check that to be quasi-isometric is an equivalence relation.
An $(\alpha, \beta)$-\emph{quasi-geodesic} in $X$ is an
$(\alpha, \beta)$-quasi-isometric embedding between an interval of $\RR$ and $X$.

A fundamental property of hyperbolic spaces is the following (see, e.g., \cite[p.88]{GH}):

\begin{theorem}[Invariance of hyperbolicity]\label{invarianza}
Let $f:X\longrightarrow Y$ be an $(\alpha,\beta)$-quasi-isometric embedding between the geodesic
metric spaces $X$ and $Y$.
If $Y$ is hyperbolic, then $X$ is hyperbolic.

Besides, if $f$ is $\e$-full for some $\e\ge0$ (a quasi-isometry), then $X$ is
hyperbolic if and only if $\,Y$ is hyperbolic.
\end{theorem}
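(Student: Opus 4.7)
The plan is to deduce the result from the stability of quasi-geodesics (the ``Morse lemma'') in hyperbolic spaces, which states that in a $\d$-hyperbolic geodesic metric space, every $(\a,\b)$-quasi-geodesic with endpoints $p,q$ lies within Hausdorff distance $H=H(\d,\a,\b)$ of any geodesic $[pq]$. This is the main ingredient, and while nontrivial, it is a standard fact available in \cite[p.~88]{GH}; I would cite it rather than reproduce its proof.

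For the first statement, fix a geodesic triangle $T=\{[x_1x_2],[x_2x_3],[x_3x_1]\}$ in $X$ and a point $p\in[x_1x_2]$. The restriction of $f$ to each geodesic side of $T$ is an $(\a,\b)$-quasi-geodesic in $Y$, so I would choose actual geodesics $[f(x_i)f(x_j)]$ in $Y$ and invoke the Morse lemma to obtain a constant $H=H(\d(Y),\a,\b)$ with the property that each $f([x_ix_j])$ lies in the $H$-neighborhood of $[f(x_i)f(x_j)]$ (and vice versa). Then $f(p)$ is at distance at most $H$ from some point $q\in [f(x_1)f(x_2)]$; the $\d(Y)$-thinness of the triangle $\{[f(x_1)f(x_2)],[f(x_2)f(x_3)],[f(x_3)f(x_1)]\}$ provides a point $q'$ on one of the other two sides with $d_Y(q,q')\le \d(Y)$; and the Morse lemma again yields a point $f(p')$ on the image of the corresponding side of $T$ with $d_Y(q',f(p'))\le H$. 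Combining,
\[
d_Y(f(p),f(p'))\le 2H+\d(Y),
\]
and the lower bound in the quasi-isometric embedding inequality gives $d_X(p,p')\le \a(2H+\d(Y)+\b)$. Hence every side of $T$ lies in the $\a(2H+\d(Y)+\b)$-neighborhood of the union of the other two, so $\d(X)\le \a(2H+\d(Y)+\b)<\infty$.

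For the ``besides'' clause, assuming $f$ is $\e$-full, I would construct a quasi-inverse $g:Y\to X$ by choosing, for each $y\in Y$, a point $g(y)\in X$ with $d_Y(f(g(y)),y)\le \e$. A routine check using the defining inequality of $f$ shows that $g$ is an $(\a,\a\b+2\a\e)$-quasi-isometric embedding, and applying the first part of the theorem to $g$ yields the reverse implication.

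The main obstacle is really the Morse lemma: everything else is a bookkeeping argument with the quasi-isometry inequalities and the thin-triangle condition. Since the Morse lemma is a deep fact of hyperbolic geometry but is exactly the content of the reference given, the proof as presented is essentially an assembly of the right estimates around that single black-box.
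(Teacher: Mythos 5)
Your proposal is correct, but note that the paper does not actually prove Theorem \ref{invarianza}: it is quoted as a known fact with a citation to \cite[p.88]{GH}, so there is no internal proof to compare against. Your argument is the standard one from the literature: reduce everything to the stability of quasi-geodesics (which the paper itself later records as Theorem \ref{t:Geodesic-Stability}, from the same reference), push a geodesic triangle of $X$ forward to a quasi-geodesic triangle of $Y$, compare it to a genuine geodesic triangle via the Hausdorff-distance bound $H$, use $\d(Y)$-thinness there, and pull the estimate back through the lower quasi-isometry inequality to get $\d(X)\le \a\bigl(2H+\d(Y)+\b\bigr)$. The bookkeeping is right, and the quasi-inverse construction for the $\e$-full case (yielding an $(\a,\a\b+2\a\e)$-quasi-isometric embedding $g:Y\to X$ to which the first part applies) is the standard way to obtain the converse implication. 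The only caveat is the one you acknowledge yourself: the entire analytic content sits inside the Morse lemma, which you treat as a black box; since that is exactly what the cited source supplies, this is a legitimate division of labor rather than a gap.
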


\begin{lemma} \label{l:vertices}
Consider two graphs $G_1$ and $G_2$.
If $f: V(G_1) \longrightarrow V(G_2)$ is an $(\alpha,\beta)$-quasi-isometric embedding, then
there exists an $(\alpha,\alpha+\beta)$-quasi-isometric embedding $g: G_1 \longrightarrow G_2$ with $g=f$ on $V(G_1)$.
Furthermore, if $f$ is $\e$-full, then $g$ is $(\e+\frac{1}{2})$-full.
\end{lemma}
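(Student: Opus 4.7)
Proof plan. The plan is to extend $f$ in the crudest possible way: collapse each edge to its nearest vertex. Precisely, I would fix for every point $x\in G_1$ a vertex $v_x\in V(G_1)$ realizing $d_{G_1}(x,V(G_1))$ (ties broken arbitrarily, and $v_x=x$ whenever $x\in V(G_1)$), and define $g(x):=f(v_x)$. Since every point on a length-$1$ edge lies at distance at most $1/2$ from some endpoint, $d_{G_1}(x,v_x)\le 1/2$ for every $x\in G_1$, and by construction $g=f$ on $V(G_1)$.

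To verify the quasi-isometric embedding inequality, the key observation is that by the triangle inequality
\[
d_{G_1}(x,y)-1\ \le\ d_{G_1}(v_x,v_y)\ \le\ d_{G_1}(x,y)+1
\]
for every $x,y\in G_1$. Applying the $(\alpha,\beta)$-QIE hypothesis to the pair $v_x,v_y\in V(G_1)$ then gives the upper bound
\[
d_{G_2}(g(x),g(y))=d_{G_2}(f(v_x),f(v_y))\ \le\ \alpha d_{G_1}(v_x,v_y)+\beta\ \le\ \alpha d_{G_1}(x,y)+\alpha+\beta,
\]
and the lower bound
\[
d_{G_2}(g(x),g(y))\ \ge\ \alpha^{-1}d_{G_1}(v_x,v_y)-\beta\ \ge\ \alpha^{-1}d_{G_1}(x,y)-\alpha^{-1}-\beta\ \ge\ \alpha^{-1}d_{G_1}(x,y)-\alpha-\beta,
\]
where in the last step I use $\alpha\ge 1$ so that $\alpha^{-1}\le\alpha$. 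This is exactly the $(\alpha,\alpha+\beta)$-QIE condition.

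For fullness, suppose $f$ is $\varepsilon$-full and let $y\in G_2$. Choose a vertex $w\in V(G_2)$ with $d_{G_2}(y,w)\le 1/2$; by $\varepsilon$-fullness of $f$ there exists $u\in V(G_1)$ with $d_{G_2}(f(u),w)\le\varepsilon$. Since $g(u)=f(u)$, the triangle inequality yields $d_{G_2}(g(u),y)\le\varepsilon+1/2$, so $g$ is $(\varepsilon+\tfrac{1}{2})$-full.

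There is no real obstacle here — this is a routine ``vertex-collapse'' extension. The only points of care are verifying that $d_{G_1}(x,V(G_1))\le 1/2$ on any graph whose edges have length $1$ (immediate from the identification of each edge with $[0,1]$), and keeping track of the constants so that the additive term comes out to $\alpha+\beta$ rather than something larger; absorbing $\alpha^{-1}$ into $\alpha$ (valid since $\alpha\ge 1$) is what makes the bound symmetric on both sides.
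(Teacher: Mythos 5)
Your proposal is correct and is essentially the paper's own argument: the same nearest-vertex collapse $g(x)=f(v_x)$, the same $\pm 1$ comparison between $d_{G_1}(x,y)$ and $d_{G_1}(v_x,v_y)$ with $\alpha^{-1}\le\alpha$ absorbing the additive term, and the same half-edge argument for fullness (which the paper compresses into the remark that $g(G_1)=f(V(G_1))$). No issues.
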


\begin{proof}
For each $x\in G_1$, let us choose a closest point $v_x \in V(G_1)$ from $x$, and define $g(x):=f(v_x)$.
Note that $v_x=x$ if $x \in V(G_1)$ and so $g=f$ on $V(G_1)$.
Given $x,y\in G_1,$ we have
$$
\begin{aligned}
d_{G_2}(g(x),g(y)) & = d_{G_2}(f(v_x),f(v_y)) \leq \alpha d_{G_1}(v_x,v_y)+\beta \le \alpha \big(d_{G_1}(x,y) + 1\big) +\beta ,
\\
d_{G_2}(g(x),g(y)) & = d_{G_2}(f(v_x),f(v_y)) \geq \alpha^{-1} d_{G_1}(v_x,v_y)-\beta \ge \alpha^{-1} \big(d_{G_1}(x,y) - 1\big) -\beta ,
\end{aligned}
$$
and $g$ is an $(\alpha,\alpha+\beta)$-quasi-isometric embedding, since $\alpha \ge 1 \ge \alpha^{-1}$.

Furthermore, if $f$ is $\e$-full, then $g$ is $(\e+\frac{1}{2})$-full since $g(G_1)=f(V(G_1))$.
\end{proof}

Given a graph $G$, let $g_I(G)$ denote the \emph{odd girth} of $G$, this is, the length of the shortest odd cycle in $G$.

\begin{theorem}\label{t:G2odd} Let $G_1$ be a graph and $G_2$ be a non-trivial bounded graph with some odd cycle. Then, $G_1 \times G_2$ is hyperbolic if and only if $G_1$ is hyperbolic.
\end{theorem}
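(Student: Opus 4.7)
The forward implication is almost immediate from Theorem~\ref{t:neccon}: if $G_1\times G_2$ is hyperbolic, then one factor must be hyperbolic and the other bounded; since $G_2$ is bounded (hence hyperbolic), the hyperbolic factor is forced to be $G_1$. The content lies in the converse, and my plan is to produce a quasi-isometry $g\colon G_1\to G_1\times G_2$ and then invoke Theorem~\ref{invarianza}.

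Set $\ell:=g_I(G_2)$ and choose a vertex $v_0\in V(G_2)$ lying on an odd cycle of length $\ell$. Define $f\colon V(G_1)\to V(G_1\times G_2)$ by $f(u):=(u,v_0)$. Two elementary walk-level remarks drive the estimate, to be combined with Proposition~\ref{p:distvert1}. First, since $G_2$ is connected and non-trivial, the length-zero walk at $v_0$ together with edge backtracking produces closed walks at $v_0$ of every even length $\ge 0$, while the odd cycle through $v_0$ (again padded by backtracking) produces closed walks at $v_0$ of every odd length $\ge\ell$. Second, in any graph, a $u_1,u_2$-geodesic can be padded by backtracking along one edge to yield $u_1,u_2$-walks of every length $d_{G_1}(u_1,u_2)+2k$, $k\ge 0$. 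A short parity case-split in Proposition~\ref{p:distvert1} then shows that the smallest common walk length $n$ satisfies $d_{G_1}(u_1,u_2)\le n\le d_{G_1}(u_1,u_2)+\ell$: if $d_{G_1}(u_1,u_2)$ is even, $n=d_{G_1}(u_1,u_2)$; if it is odd, $n=\max\{d_{G_1}(u_1,u_2),\ell\}$. Combined with the lower bound of Corollary~\ref{c:lower}, this makes $f$ a $(1,\ell)$-quasi-isometric embedding on vertices.

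For $\varepsilon$-fullness (we may assume $G_1$ non-trivial, the opposite case being immediate since then $\delta(G_1)=\delta(G_1\times G_2)=0$), let $(u,v)\in V(G_1\times G_2)$ and set $d':=d_{G_2}(v,v_0)\le\diam G_2=:D$. If $d'$ is even, the $v,v_0$-geodesic together with the trivial closed walk at $u$ padded to length $d'$ yields $d_{G_1\times G_2}((u,v),f(u))\le d'$. If $d'$ is odd, pick any neighbor $u'$ of $u$ in $G_1$; then $u,u'$-walks of odd length $\ge 1$ exist, and matching them with the $v,v_0$-walk of length $d'$ gives $d_{G_1\times G_2}((u,v),f(u'))\le d'$. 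Hence $f$ is $D$-full on vertices. Lemma~\ref{l:vertices} now extends $f$ to a $\bigl(D+\tfrac12\bigr)$-full $(1,\ell+1)$-quasi-isometry $g\colon G_1\to G_1\times G_2$, and Theorem~\ref{invarianza} delivers the biconditional.

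The main delicate point I expect is precisely the parity bookkeeping in Proposition~\ref{p:distvert1}: the quasi-isometry argument succeeds because the odd cycle in $G_2$ absorbs any parity mismatch between a $G_1$-walk and a closed $G_2$-walk at $v_0$, at a cost bounded by the fixed constant $\ell$. If $G_2$ were bipartite this compensation would fail, which is consistent with the genuinely different behavior treated in Theorems~\ref{t:no-odd}, \ref{t:P2}, \ref{t:dense}, \ref{t:char} and Corollary~\ref{c:char}.
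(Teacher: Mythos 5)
Your proposal is correct and follows essentially the same route as the paper: the section $u\mapsto(u,v_0)$ with $v_0$ on a shortest odd cycle, the parity analysis via Proposition~\ref{p:distvert1} showing a $(1,g_I(G_2))$-quasi-isometric embedding, fullness, and then Lemma~\ref{l:vertices} and Theorem~\ref{invarianza} (your fullness step, moving to a neighbour $u'$ in $G_1$ instead of traversing the odd cycle in $G_2$, even gives a slightly better constant than the paper's $\diam V(G_2)+g_I(G_2)$). One caution: do not lean on Theorem~\ref{t:neccon} for the forward implication, since in the paper that theorem is derived downstream of this one (via Corollary~\ref{c:non-hyperbolic}) and the appeal would be circular --- but this is harmless here because your $\varepsilon$-full quasi-isometry already yields the biconditional through Theorem~\ref{invarianza}.
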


\begin{proof} Let $v_0\in V(G_2)$ such that $v_0$ is contained in an odd cycle $C$ with $L(C)=g_I(G_2)$. Consider the map $i:V(G_1) \to V(G_1\times G_2)$ such that $i(w):=(w,v_0)$ for every $w\in V(G_1)$.

By Corollary \ref{c:lower}, for any pair of vertices $w_1,w_2\in V(G_1)$,
$d_{G_1}(w_1,w_2)\leq d_{G_1\times G_2}\big((w_1,v_0),(w_2,v_0) \big)$. Also, Proposition \ref{p:distvert1} gives the following.

If a geodesic joining $w_1$ and $w_2$ has even length, then
$$d_{G_1\times G_2}\big((w_1,v_0),(w_2,v_0) \big)=d_{G_1}(w_1,w_2).$$
If a geodesic joining $w_1$ and $w_2$ has odd length, then $C$ defines a $v_0, v_0$-walk with odd length and
$$d_{G_1\times G_2}\big((w_1,v_0),(w_2,v_0) \big)\leq \max\{d_{G_1}(w_1,w_2),g_I(G_2)\}\le d_{G_1}(w_1,w_2)+g_I(G_2).$$

Thus, $i$ is a $\big(1,g_I(G_2)\big)$ quasi-isometric embedding.

Consider any $(w,v)\in V(G_1\times G_2)$. Then, if the geodesic joining $v$ and $v_0$ has even length,
$$d_{G_1\times G_2}\big((w,v),(w,v_0) \big)=d_{G_2}(v,v_0).$$
If a geodesic joining $v$ and $v_0$ has odd length, $[vv_0]\cup C$ defines a $v, v_0$-walk with even length. Therefore,
$$d_{G_1\times G_2}\big((w,v),(w,v_0) \big)\leq d_{G_2}(v,v_0)+g_I(G_2).$$
Thus, $i$ is $\big( \diam(V(G_2))+g_I(G_2)\big)$-full.

Hence, by Lemma \ref{l:vertices}, there is a $\big(\!\diam(V(G_2))+g_I(G_2)+\frac12 \big)$-full $\big(1,g_I(G_2)+1 \big)$-quasi-isometry, $j:G_1 \to G_1\times G_2$, and $G_1\times G_2$ is hyperbolic if and only if $G_1$ is hyperbolic by Theorem \ref{invarianza}.
\end{proof}

\begin{theorem}\label{t:no-odd}  Let $G_1$ be a graph without odd cycles and $G_2$ be a non-trivial bounded graph without odd cycles. Then, $G_1 \times G_2$ is hyperbolic if and only if $G_1$ is hyperbolic.
\end{theorem}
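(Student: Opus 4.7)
The approach parallels the proof of Theorem \ref{t:G2odd}, with the essential difference that both factors being bipartite forces $G_1\times G_2$ to decompose into exactly two connected components by Corollary \ref{c:conect}. The strategy is to exhibit, for each component $X$, a quasi-isometry from $G_1$ onto $X$, so that Theorem \ref{invarianza} yields the desired equivalence of hyperbolicity in both directions.

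Writing $V(G_j)=A_j\cup B_j$ for the bipartition of $G_j$, the two components of $G_1\times G_2$ are the ones containing $(A_1\times A_2)\cup (B_1\times B_2)$ and $(A_1\times B_2)\cup (B_1\times A_2)$; call them $X_1$ and $X_2$. After disposing of the trivial case $|V(G_1)|=1$, I would choose an edge $[v_0,v_0']\in E(G_2)$ with $v_0\in A_2$ and $v_0'\in B_2$ (which exists since $G_2$ is non-trivial, connected and bipartite) and define $i: V(G_1)\to V(X_1)$ by $i(w)=(w,v_0)$ for $w\in A_1$ and $i(w)=(w,v_0')$ for $w\in B_1$. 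Using Proposition \ref{p:distvert1}, $i$ is seen to be an isometric embedding: for any $w_1,w_2\in V(G_1)$, the distance $d_{G_1}(w_1,w_2)$ and the $G_2$-distance between the second coordinates of $i(w_1),i(w_2)$ (namely $0$ if $w_1,w_2$ lie in the same bipartition class, and $1$ otherwise) share parity, and walks of any prescribed parity and sufficient length exist at every vertex of each factor (since $G_1$ and $G_2$ are connected and non-trivial), so the minimum common walk length is exactly $d_{G_1}(w_1,w_2)$. Moreover, for any $(w,v)\in V(X_1)$ the $G_2$-distance from $v$ to the second coordinate of $i(w)$ is an even integer bounded by $\diam V(G_2)$, which yields $d_{G_1\times G_2}((w,v),i(w))\le \diam V(G_2)$; hence $i$ is $\diam V(G_2)$-full.

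By Lemma \ref{l:vertices}, $i$ extends to a $(1,1)$-quasi-isometric embedding $g: G_1\to X_1$ that is $\big(\diam V(G_2)+\tfrac12\big)$-full, i.e.\ a quasi-isometry, and Theorem \ref{invarianza} gives that $X_1$ is hyperbolic if and only if $G_1$ is hyperbolic. Swapping the roles of $A_1$ and $B_1$ in the definition of $i$ produces the analogous quasi-isometry onto $X_2$, and the theorem follows from the convention $\delta(G_1\times G_2)=\max\{\delta(X_1),\delta(X_2)\}$. The main obstacle I anticipate is the parity constraint inherited from both factors being bipartite: the naive single-layer map $w\mapsto(w,v_0)$ sends $A_1$ and $B_1$ to different components of $G_1\times G_2$, so distances between images across the two bipartition classes of $G_1$ become infinite; splitting the image across the adjacent vertices $v_0$ and $v_0'$ is precisely what synchronizes the parities, keeps the image inside a single component, and lets Proposition \ref{p:distvert1} yield a genuine isometry rather than the vacuous bound $\infty$.
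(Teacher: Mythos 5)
Your proposal is correct and follows essentially the same route as the paper: the paper's map $i(w):=(w,v_1)$ or $(w,v_2)$ according to the parity of $w_0,w$-walks is exactly your bipartition-splitting map $w\mapsto (w,v_0)$ on $A_1$ and $w\mapsto (w,v_0')$ on $B_1$, and both arguments then invoke Proposition \ref{p:distvert1} (or the same-parity case of Corollary \ref{c:lower}) to get an isometric embedding, check $\diam$-fullness, and conclude via Lemma \ref{l:vertices} and Theorem \ref{invarianza}, treating the second component symmetrically.
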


\begin{proof} Fix some vertex $w_0\in V(G_1)$ and some edge $[v_1,v_2]\in E(G_2)$.

By Theorem \ref{t:conect}, there are exactly two components in $G_1 \times G_2$. Since there are no odd cycles, there is no $(w_0,v_1), (w_0, v_2)$-walk in
$G_1\times G_2$. Thus, let us denote by $(G_1\times G_2)^1$ the component containing the vertex $(w_0,v_1)$ and  by $(G_1\times G_2)^2$ the component containing the vertex $(w_0,v_2)$.

Consider $i:V(G_1) \to V(G_1\times G_2)^1$ defined as $i(w):=(w,v_1)$ for every $w\in V(G_1)$ such that every $w_0, w$-walk has even length and  $i(w):=(w,v_2)$ for every $w\in V(G_1)$ such that every $w_0, w$-walk has odd length.

By Proposition \ref{p:distvert1}, $d_{G_1\times G_2}\big(i(w_1),i(w_2)\big)=d_{G_1}(w_1,w_2)$ for every $w_1,w_2\in V(G_1)$ and $i$ is a $(1,0)$-quasi-isometric embedding.

Let $(w,v)\in V(G_1\times G_2)^1$. Let $v_j$ with $j\in\{1,2\}$ such that every $v, v_j$-walk has even length. Then, by Proposition \ref{p:distvert1}, $d_{G_1\times G_2}\big((w,v),(w,v_j)\big)=d_{G_2}(v,v_j)\leq \diam(G_2)$. Therefore, $i$ is $\diam(G_2)$-full.

Hence, by Lemma \ref{l:vertices}, there is a  $\big(\!\diam(G_2)+\frac12 \big)$-full $\big(1,1 \big)$-quasi-isometry, $j:G_1 \to (G_1\times G_2)^1$, and $(G_1\times G_2)^1$ is hyperbolic if and only if $G_1$ is hyperbolic by Theorem \ref{invarianza}.

The same argument proves that $(G_1\times G_2)^2$ is hyperbolic.
\end{proof}

Denote by $P_2$ the path graph with two vertices, i.e., a graph with two vertices and an edge.

\begin{lemma} \label{l:P2}
Let $G_1$ be a graph with some odd cycle and $G_2$ a non-trivial bounded graph without odd cycles.
Then $G_1\times G_2$ and $G_1\times P_2$ are quasi-isometric and $\d(G_1\times P_2)\le \d(G_1\times G_2)$.
\end{lemma}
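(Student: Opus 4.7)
The plan is to use the bipartite structure of $G_2$ (equivalent to the absence of odd cycles) and to read both distances in $G_1\times G_2$ and in $G_1\times P_2$ off Proposition \ref{p:distvert1}. Since $G_1$ has an odd cycle, Theorem \ref{t:conect} ensures that both products are connected.

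First I would establish the quasi-isometry. Let $\{A,B\}$ be the bipartition of $V(G_2)$, fix an edge $[v_1,v_2]\in E(G_2)$ with $v_1\in A$, $v_2\in B$, and write $V(P_2)=\{a,b\}$. Define $f:V(G_1\times G_2)\to V(G_1\times P_2)$ by $f(u,v)=(u,a)$ if $v\in A$ and $f(u,v)=(u,b)$ if $v\in B$. Using that in a bipartite graph walks between two vertices attain every length of the correct parity that is at least the distance (by padding along a single edge), Proposition \ref{p:distvert1} rewrites $d_{G_1\times G_2}((u,v),(u',v'))$ as the smallest integer $n$ of the same parity as $d_{G_2}(v,v')$ for which $G_1$ admits a $u,u'$-walk of length $n$ and $n\ge d_{G_2}(v,v')$; while $d_{G_1\times P_2}(f(u,v),f(u',v'))$ is the smallest such $n$ with the lower-bound condition dropped. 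One inequality is immediate, and the other follows from padding any minimizing $G_1$-walk by pairs (possible because $G_1$ is connected and non-trivial) up to length $d_{G_2}(v,v')$, yielding $d_{G_1\times G_2}\le d_{G_1\times P_2}+\diam V(G_2)$. Thus $f$ is a $0$-full $(1,\diam V(G_2))$-quasi-isometric embedding on vertex sets, and Lemma \ref{l:vertices} lifts it to the desired quasi-isometry.

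Next, for the inequality $\d(G_1\times P_2)\le\d(G_1\times G_2)$, I would build an isometric embedding $j$ in the opposite direction by $j(u,a):=(u,v_1)$, $j(u,b):=(u,v_2)$ on vertices, extended linearly on edges. The same distance analysis shows that vertex distances are preserved exactly: because $d_{G_2}(v_i,v_j)\in\{0,1\}$ for $v_i,v_j\in\{v_1,v_2\}$, the lower bound $n\ge d_{G_2}(v_i,v_j)$ in the formula from Proposition \ref{p:distvert1} is automatically satisfied by any $n$ of the correct parity supporting a $G_1$-walk. Each edge $[(u,a),(u',b)]$ of $G_1\times P_2$ maps bijectively onto the edge $[(u,v_1),(u',v_2)]$ of $G_1\times G_2$, so the extension is an isometric embedding of metric graphs. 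Since isometric embeddings carry geodesic triangles to geodesic triangles with the same thinness, this yields $\d(G_1\times P_2)\le\d(G_1\times G_2)$.

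The main obstacle will be the careful parity bookkeeping around Proposition \ref{p:distvert1}: one must confirm both that walks in the bipartite factor $G_2$ realize every admissible length of the correct parity, and that walks in $G_1$ admit unlimited padding by $2$'s, a point that rests on $G_1$ being non-trivial and connected (both guaranteed by the odd cycle hypothesis). A minor technical step is promoting the vertex-level isometry $j$ to a metric-graph isometric embedding; this is automatic because edges map bijectively onto edges and distances between interior points of a metric graph reduce to vertex distances through the standard formula.
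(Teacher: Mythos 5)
Your proof is correct and follows essentially the same route as the paper: both arguments rest on Proposition \ref{p:distvert1} together with the parity/padding analysis of walks in the bipartite factor, and your map $j$ is exactly the paper's identification of $G_1\times P_2$ with the isometric subgraph $G_1\times [w_1,w_2]$ of $G_1\times G_2$. The only cosmetic difference is that you obtain the quasi-isometry from a collapse map $G_1\times G_2\to G_1\times P_2$ induced by the bipartition of $G_2$, whereas the paper shows the inclusion is $\diam V(G_2)$-full and applies Lemma \ref{l:vertices}; the underlying distance estimate is the same.
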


\begin{proof}
By Theorem \ref{t:conect}, we know that $G_1\times G_2$ and $G_1\times P_2$ are connected graphs.

Denote by $v_1$ and $v_2$ the vertices of $P_2$ and fix $[w_1,w_2] \in E(G_2)$.
The map $f: V(G_1\times P_2) \longrightarrow V(G_1\times [w_1,w_2])$ defined as $f(u,v_j):=(u,w_j)$ for every $u\in V(G_1)$ and $j=1,2,$
is an isomorphism of graphs; hence, it suffices to prove that
$G_1\times G_2$ and $G_1\times [w_1,w_2]$ are quasi-isometric.

Consider the inclusion map $i: V(G_1\times [w_1,w_2]) \longrightarrow V(G_1\times G_2)$.
Since $G_1\times [w_1,w_2]$ is a subgraph of $G_1\times G_2$, we have
$d_{G_1\times G_2}(x,y) \le d_{G_1\times [w_1,w_2]}(x,y)$ for every $x,y \in V(G_1\times [w_1,w_2])$.

Since $G_2$ is a graph without odd cycles, every $w_1, w_2$-walk has odd length and every $w_j, w_j$-walk has even length for $j=1,2$.
Thus Proposition \ref{p:distvert1} gives, for every $x=(u,w_1),y=(v,w_2) \in V(G_1\times [w_1,w_2])$,
$$
d_{G_1\times [w_1,w_2]}(x, y) = d_{G_1\times G_2}(x, y) = \min\big\{L(g)\,| \; g \text{ is a }   u, v \text{-walk of odd length} \big\}.
$$
Furthermore, for every $x=(u,w_j),y=(v,w_j) \in V(G_1\times [w_1,w_2])$ and $j=1,2,$
$$
d_{G_1\times [w_1,w_2]}(x, y) = d_{G_1\times G_2}(x, y) = \min\big\{L(g)\,| \; g \text{ is a }   u, v \text{-walk of even length} \big\}.
$$
Hence, $d_{G_1\times [w_1,w_2]}(x,y) = d_{G_1\times G_2}(x,y)$ for every $x,y \in V(G_1\times [w_1,w_2])$, and the inclusion map $i$ is an $(1,0)$-quasi-isometric embedding.
Therefore, $\d(G_1\times P_2)=\d(G_1\times [w_1,w_2])\le \d(G_1\times G_2)$.

Since $G_2$ is a graph without odd cycles, given any $w \in V(G_2)$, we have either that every $w, w_1$-walk has even length and every $w, w_2$-walk has odd length  or that every $w, w_2$-walk has even length and every $w, w_1$-walk has odd length. Also, since $G_1$ is connected, for each $u\in V(G_1)$ there is some $u'\in V(G_1)$ such that $[u,u']\in E(G_1)$. Therefore, by Proposition \ref{p:distvert1}, for every $(u,w) \in V(G_1\times G_2)$, if $\min \big\{ d_{G_2}(w,w_1), \, d_{G_2}(w,w_2) \big\}$ is even, then
$$d_{G_1\times G_2}\big((u,w),V(G_1\times [w_1,w_2])\big)=d_{G_1\times G_2}\big((u,w),V(u\times [w_1,w_2])\big)= \min \big\{ d_{G_2}(w,w_1), \, d_{G_2}(w,w_2) \big\},$$ and if $\min \big\{ d_{G_2}(w,w_1), \, d_{G_2}(w,w_2) \big\}$ is odd, then $$d_{G_1\times G_2}\big((u,w),V(G_1\times [w_1,w_2])\big)=d_{G_1\times G_2}\big((u,w),V(u'\times [w_1,w_2])\big)= \min \big\{ d_{G_2}(w,w_1), \, d_{G_2}(w,w_2) \big\}.$$

In both cases,
$$
\begin{aligned}
d_{G_1\times G_2}\big((u,w),V(G_1\times [w_1,w_2])\big)\le \diam V(G_2) ,
\end{aligned}
$$

\noindent and $i$ is $\big( \! \diam V(G_2)\big)$-full.
By Lemma \ref{l:vertices},
there exists a $\big( \!\diam V(G_2)+\frac{1}{2}\big)$-full $(1,1)$-quasi-isometry $g: G_1\times [w_1,w_2] \longrightarrow G_1\times G_2$.
\end{proof}

We say that a subgraph $\G$ of $G$ is \emph{isometric} if $d_{\G}(x,y)=d_{G}(x,y)$ for every $x,y\in \G$.
It is easy to check that a subgraph $\G$ of $G$ is isometric if and only if $d_{\G}(u,v)=d_{G}(u,v)$ for every $u,v\in V(\G)$.
Isometric subgraphs are very important in the study of hyperbolic graphs, as the following result shows.

\begin{lemma}\cite[Lemma 5]{RSVV}
\label{l:subgraph}
If $\G$ is an isometric subgraph of $G$, then $\d(\G) \le \d(G)$.
\end{lemma}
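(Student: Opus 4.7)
The plan is to lift an arbitrary geodesic triangle of $\G$ to a geodesic triangle of $G$ and then transport back the thinness inequality using that $\G$ is isometrically embedded.

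First, I would fix a geodesic triangle $T=\{x_1,x_2,x_3\}$ in $\G$ with sides $\g_{ij}\subset\G$ that are $\G$-geodesics joining $x_i$ and $x_j$. The first step is to observe that each $\g_{ij}$ is also a $G$-geodesic: its length equals $d_{\G}(x_i,x_j)$, and by the isometric hypothesis (which, by the remark preceding the lemma, extends from vertices to all points of $\G$, since any point lies on an edge isometric to $[0,1]$ that is common to $\G$ and $G$) we have $d_{\G}(x_i,x_j)=d_G(x_i,x_j)$. Hence $\g_{ij}$ realizes the $G$-distance between its endpoints and is a geodesic of $G$. In particular $T$ is a geodesic triangle of $G$.

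Next, apply the Rips condition in $G$: for each point $p\in\g_{ij}$ there exists $q\in\g_{ik}\cup\g_{jk}$ such that $d_G(p,q)\le \d(G)$. Since both $p$ and $q$ lie in $\G$, the isometric property gives $d_{\G}(p,q)=d_G(p,q)\le \d(G)$. This shows that every side of $T$ is contained in the $\d(G)$-neighborhood (in $\G$) of the union of the other two sides, i.e., $T$ is $\d(G)$-thin as a geodesic triangle of $\G$, so $\d(T)\le \d(G)$. Taking the supremum over all geodesic triangles $T$ in $\G$ yields $\d(\G)\le \d(G)$.

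The only real point to check carefully is that the isometric hypothesis, which is formulated for vertices, implies $d_{\G}(x,y)=d_G(x,y)$ for all $x,y\in \G$ (including interior points of edges); this is precisely the equivalence pointed out just before the lemma statement, and it is what makes both the ``geodesics of $\G$ are geodesics of $G$'' step and the ``pull the thinness back to $\G$'' step work. No other obstacle arises.
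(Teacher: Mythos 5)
Your argument is correct: since the paper's definition of an isometric subgraph already requires $d_{\G}(x,y)=d_G(x,y)$ for \emph{all} points $x,y\in\G$ (the remark before the lemma only says it suffices to check this on vertices), every $\G$-geodesic is a $G$-geodesic, so each geodesic triangle of $\G$ is a geodesic triangle of $G$, and the thinness bound transports back verbatim. The paper itself only cites \cite[Lemma 5]{RSVV} without reproducing a proof, and your argument is precisely the standard one behind that reference, so there is nothing further to compare.
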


A $u, v$-walk $g$ in $G$ is a \emph{shortcut} of a cycle $C$ if $g \cap C=\{u,v\}$ and $L(g) < d_{C}(u,v)$ where $d_C$ denotes the length metric on $C$.

\smallskip

A cycle $C'$ is a \emph{reduction} of the cycle $C$ if both have odd length and $C'$ is the union of a subarc $\eta$ of $C$ and a shortcut of $C$ joining the endpoints of $\eta$.
Note that $L(C') \le L(C) - 2$.
We say that a cycle is \emph{minimal} if it has odd length and it does not have a reduction.

\begin{lemma}
\label{l:oddcycle}
If $C$ is a minimal cycle of $G$, then $L(C) \le 4\d(G)$.
\end{lemma}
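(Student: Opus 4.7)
The plan is to show that a minimal cycle $C$ must be an isometric subgraph of $G$, and then to invoke the known hyperbolicity constant of a cycle together with Lemma \ref{l:subgraph}.

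First I would verify that $C$ has no shortcut. Suppose, for contradiction, that there is a shortcut $g$ of $C$ joining vertices $u,v \in C$, so $g\cap C=\{u,v\}$ and $L(g) < d_C(u,v)$. The two subarcs $\eta_1,\eta_2$ of $C$ joining $u$ and $v$ satisfy $L(\eta_1)+L(\eta_2)=L(C)$, which is odd. Hence exactly one of $L(\eta_1),L(\eta_2)$ has the opposite parity to $L(g)$, and so exactly one of the two cycles $C_i' := g \cup \eta_i$ has odd length. Denote this odd cycle by $C'$ and its subarc by $\eta$. Since $L(g) < d_C(u,v) = \min\{L(\eta_1), L(\eta_2)\} \le L(\eta)$, we obtain
$$
L(C') = L(g) + L(\eta) < L(\eta_1) + L(\eta_2) = L(C) ,
$$
so $C'$ is a reduction of $C$, contradicting the minimality of $C$.

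Next, the absence of any shortcut means that for every $u,v \in V(C)$ the shortest $u,v$-walk in $G$ has length at least $d_C(u,v)$, hence $d_G(u,v)=d_C(u,v)$. Therefore $C$ is an isometric subgraph of $G$, and Lemma \ref{l:subgraph} yields $\d(C) \le \d(G)$.

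To finish I would use the standard fact that the hyperbolicity constant of the cycle graph of length $n$ equals $n/4$. Applied to $C$, this gives $L(C)/4 = \d(C) \le \d(G)$, i.e., $L(C) \le 4\d(G)$. The only nontrivial step is the shortcut-to-reduction construction above; the parity bookkeeping that produces the odd cycle $C'$ from an arbitrary shortcut is the place one has to be careful, but once this is in place the rest is an immediate consequence of Lemma \ref{l:subgraph} and the value of $\d$ on a cycle.
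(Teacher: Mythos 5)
Your proposal is correct and follows essentially the same route as the paper: the shortcut--parity argument showing that a minimal cycle is an isometric subgraph, followed by Lemma \ref{l:subgraph} and the fact that a cycle of length $n$ has hyperbolicity constant $n/4$ (which the paper establishes in situ by exhibiting the geodesic bigon through two antipodal points rather than citing it). The only cosmetic difference is that your case analysis selects the odd cycle $g\cup\eta_i$ by a single parity count instead of splitting on the parity of $L(g)$, which is an equivalent bookkeeping.
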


\begin{proof}
We prove first that $C$ is an isometric subgraph of $G$.
Seeking for a contradiction assume that $C$ is not an isometric subgraph.
Thus, there exists a shortcut $g$ of $C$ with endpoints $u,v$.
There are two subarcs $\eta_1,\eta_2$ of $C$ joining $u$ and $v$; since $C$ has odd length, we can assume that $\eta_1$ has even length and $\eta_2$ has odd length.
If $g$ has even length, then $C':= g \cup \eta_2$ is a reduction of $C$.
If $g$ has odd length, then $C'':= g \cup \eta_1$ is a reduction of $C$.
Hence, $C$ is not minimal, which is a contradiction, and so $C$ is an isometric subgraph of $G$.

Let $x,y \in C$ with $d_C(x,y)=L(C)/2$ and $\s_1, \s_2$ the two subarcs of $C$ joining $x,y$.
Since $C$ is an isometric subgraph, $T:=\{\s_1, \s_2\}$ is a geodesic bigon.
If $p$ is the midpoint of $\s_1$, then Lemma \ref{l:subgraph} gives $L(C)/4 = d_{G}(p,\{x,y\})= d_{G}(p,\s_2) \le \d(C) \le \d(G)$.
\end{proof}

Given any $w_{0}, w_{k}$-walk $g = [w_{0},w_{1}] \cup [w_{1},w_{2}] \cup  \cdots  \cup [w_{k-1},w_{k}]$ in $G_1$ and $P_2=[v_1,v_2]$,
if $L(g)$ is either odd or even, then we define the $(w_0,v_1), (w_k, v_i)$-walk for $i\in {1,2}$,
$$
\begin{aligned}
\Gamma_1 g & := [(w_0,v_1),(w_1,v_2)] \cup  [(w_1,v_2),(w_2,v_1)] \cup [(w_2,v_1),(w_3,v_2)] \cup \cdots \cup [(w_{k-1},v_1),(w_k,v_2)],
\\
\Gamma_1 g & := [(w_0,v_1),(w_1,v_2)] \cup  [(w_1,v_2),(w_2,v_1)] \cup [(w_2,v_1),(w_3,v_2)] \cup \cdots \cup [(w_{k-1},v_2),(w_k,v_1)],
\end{aligned}
$$
respectively.

\begin{remark}\label{r:geodesic} By Proposition \ref{p:distvert1}, if $g$ is a geodesic path in $G_1$, then $\Gamma_1 g$ is a geodesic path in $G_1\times P_2$.
\end{remark}

Let us define the map $R: V(G_1\times P_2) \rightarrow V(G_1\times P_2)$ as
$R(w,v_1)=(w,v_2)$ and $R(w,v_2)=(w,v_1)$
for every $w \in V(G_1)$, and the path $\Gamma_2 g$ as $\Gamma_2 g=R(\Gamma_1 g)$.

Let us define the map $(\Gamma_1 g)': g \to \Gamma_1 g$  which is an isometry on the edges and such that $(\Gamma_1 g)'(w_{j})=(w_j,v_1)$ if $j$ is even and $(\Gamma_1 g)'(w_{j})=(w_j,v_2)$ if $j$ is odd. Also, let $(\Gamma_2 g)':g \rightarrow \Gamma_2 g$ be the map defined by $(\Gamma_2 g)':=R\circ (\Gamma_1 g)'$.

\smallskip

Given a graph $G$, denote by $\mathfrak{C}(G)$ the set of minimal cycles of $G$.

\begin{lemma} \label{l:dc}
Let $G_1$ be a graph with some odd cycle and $P_2=[v_1,v_2]$.
Consider a geodesic $g= [w_{0}w_{k}]= [w_{0},w_{1}] \cup [w_{1},w_{2}] \cup  \cdots  \cup [w_{k-1},w_{k}]$ in $G_1$.
Let us define $w_0':=(\Gamma_1 g)'(w_0)=(w_0,v_1)$ and $w_k':=(\Gamma_2 g)'(w_k)$, i.e., $w_k':=(w_k,v_1)$ or $w_k':=(w_k,v_2)$ if $k$ is odd or even, respectively.
Then $d_{G_1\times P_2}( w_0', w_k') > \sqrt{d_{G_1}\big( w_j, \mathfrak{C}(G_1) \big)}$ for every $0 \le j \le k$.
\end{lemma}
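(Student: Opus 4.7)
\emph{Proof plan.} Set $n := d_{G_1\times P_2}(w_0', w_k')$. The goal is to produce an explicit minimal cycle of $G_1$ within distance $n^2$ of $w_j$, from which the conclusion $n>\sqrt{d_{G_1}(w_j,\mathfrak{C}(G_1))}$ follows immediately.

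First, I would unpack $n$ via Proposition~\ref{p:distvert1}. From the definitions of $w_0'$ and $w_k'$, together with the bipartite parity of walks in $P_2$ (walks from $v_1$ to $v_1$ have even length, walks from $v_1$ to $v_2$ have odd length), one checks that $n$ is the smallest positive integer such that $G_1$ admits a $w_0, w_k$-walk of length $n$ with $n\not\equiv k\pmod{2}$. In particular $n\geq k+1$, and the parity mismatch between $w_0'$ and $w_k'$ rules out $n\in\{0,1\}$, so $n\geq 2$. Next, fix a walk $\omega$ of length $n$ in $G_1$ realizing this distance and form $W:=\omega\cup g^{-1}$, a closed walk at $w_0$ of odd length $n+k$ passing through $w_j$. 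The classical fact that every closed walk of odd length contains an odd cycle as a subgraph yields an odd cycle $C_0\subseteq W$ with $L(C_0)\leq n+k$. Since $w_j$ and every point of $C_0$ both lie on $W$, travelling along $W$ in the shorter direction gives
$$
d_{G_1}(w_j, C_0)\;\leq\;\tfrac{L(W)}{2}\;=\;\tfrac{n+k}{2}.
$$

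The main technical step is to replace $C_0$ by a minimal cycle $C_{\min}$ without losing too much proximity to $w_j$. Apply reductions iteratively $C_0\to C_1\to\cdots\to C_r=C_{\min}$. If $p_i\in C_i$ denotes a closest point to $w_j$ on $C_i$ and the $i$-th reduction replaces an arc $\alpha_i$ of $C_i$ by a shortcut with endpoints $a_i,b_i$, then either $p_i$ lies on the retained arc (so $d_{G_1}(w_j, C_{i+1})\leq d_{G_1}(w_j,C_i)$), or $p_i\in\alpha_i$ and
$$
d_{G_1}(w_j, C_{i+1})\;\leq\;d_{G_1}(w_j,\{a_i,b_i\})\;\leq\;d_{G_1}(w_j, C_i)+\tfrac{|\alpha_i|}{2}\;\leq\;d_{G_1}(w_j, C_i)+\tfrac{L(C_i)}{2}.
$$
Since each reduction decreases odd length by at least $2$ (so $L(C_i)\leq L(C_0)-2i$), telescoping yields
$$
d_{G_1}(w_j, C_{\min})\;\leq\;\tfrac{n+k}{2}+\sum_{i=0}^{r-1}\tfrac{L(C_i)}{2}\;\leq\;\tfrac{n+k}{2}+\tfrac{(n+k+1)^{2}}{8}.
$$

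Finally, the strict inequality $k<n$ (forced by the parity mismatch $n\not\equiv k\pmod 2$ with $n\geq k$) gives $n+k<2n$, so the right-hand side above is strictly less than $n+n^{2}/2$; and for $n\geq 2$ one has $n+n^{2}/2\leq n^{2}$. Therefore $d_{G_1}(w_j,\mathfrak{C}(G_1))\leq d_{G_1}(w_j, C_{\min})<n^{2}$, as required. The hard part is the bookkeeping in the reduction step: individual reductions can move the cycle substantially farther from $w_j$, and one needs both the initial estimate $d_{G_1}(w_j,C_0)\leq (n+k)/2$ and the strict parity gap $k<n$ to absorb the quadratic drift $L(C_0)^{2}/8$ inside $n^2$ rather than $(n+k)^2$, which is what makes the bound $\sqrt{\,\cdot\,}$ exact.
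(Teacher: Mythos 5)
Your proof is correct and follows essentially the same route as the paper's: both read off $d_{G_1\times P_2}(w_0',w_k')$ as the least length of a $w_0,w_k$-walk whose parity differs from $k$, extract from that walk together with $g$ an odd cycle within distance $(n+k)/2$ of $w_j$, and then run the identical reduction--telescoping estimate to reach a minimal cycle, with total drift bounded by a quadratic that stays below $n^2$ thanks to $k<n$. The only (harmless) difference is that you invoke the standard fact that an odd closed walk contains an odd cycle, whereas the paper builds $C_0$ explicitly from $g\cup\sigma_0$.
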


\begin{proof}
Fix $0 \le j \le k$.
Define
$$
\mathfrak{P}:= \big\{\s\,| \; \s \text{ is a } w_0, w_k \text{-walk } \text{ such that $L(\s)$ has a parity different from that of } k \big\} .
$$
Proposition \ref{p:distvert1} gives
$$
d_{G_1\times P_2}( w_0', w_k')
 = \min\big\{L(\s)\,| \; \s \in \mathfrak{P} \big\} .
$$
Choose $\s_0 \in \mathfrak{P}$ such that $L(\s_0) = d_{G_1\times P_2}( w_0', w_k')$.
Since $L(g)+L(\s_0)$ is odd, we have $L(g)+L(\s_0)=2t+1$ for some positive integer $t$.
Thus $d_{G_1\times P_2}(w_{0}',w_{k}')=L(\s_0)>\frac{1}{2}(2t+1)$.

If $g \cup \s_0$ is a cycle, then let us define $C_0:=g \cup \s_0$.
Thus, $L(C_0)=2t+1$ and $d_{G_1}\big( w_j,C_0 \big)=0$ for every $0 \le j \le k$.
Otherwise, we may assume that $g\cap \s_0=[w_0w_{i_1}]\cup [w_{i_2}w_k]$ for some $0\leq i_1<i_2\leq k$.
If $\s_1=\s_0\setminus g$, then let us define $C_0:=[w_{i_1}w_{i_2}]\cup \s_1$ (where $[w_{i_1}w_{i_2}]\subset g$).
Hence, $C_0$ is a cycle, $L(C_0)\leq 2t-1$ and $d_{G_1}\big( w_j,C_0 \big)  < \frac12 (2t+1) $.

If $C_0$ is not minimal, then consider
a reduction $C_1$ of $C_0$. Let us repeat the process until we obtain a minimal cycle $C_s$.
Note that $L(C_1)\le L(C_0) -2$ and for every point $p_1\in C_0$, $d_{G_1}\big( p_1, C_1 \big)  < \frac12 L(C_0) $.
Now, repeating the argument, for every $1< i \le s$, $L(C_i)\le L(C_{i-1}) -2$ and for every point $p_i \in C_{i-1}$,
$d_{G_1}\big( p_i, C_i \big) < \frac12 L(C_{i-1})$.
Therefore,
$$
\begin{aligned}
d_{G_1}\big( w_j, \mathfrak{C}(G_1) \big) \leq d_{G_1}\big( w_j, C_s \big)
&\le d_{G_1}\big( w_j, C_0 \big) + \frac12 L(C_0)  +  \frac12 L(C_1)  + \cdots + \frac12 L(C_s)
\\
& < \frac12 (2t+1)  +  \frac12 (2t-1)  + \cdots +\frac52 + \frac32\,.
\end{aligned}
$$

Hence, $$d_{G_1}\big( w_j, \mathfrak{C}(G_1) \big)<\frac12\sum_{i=1}^t (2i+1)=\frac12t^2+t <\Big(\frac12 (2t+1)\Big)^2 <
\Big( d_{G_1\times P_2}( w_0', w_k')\Big)^2.$$
\end{proof}

\begin{corollary} \label{c:dc}
Let $G_1$ be a hyperbolic graph with some odd cycle and $P_2=[v_1,v_2]$.
Consider a geodesic $g= [w_{0}w_{k}]= [w_{0},w_{1}] \cup [w_{1},w_{2}] \cup  \cdots  \cup [w_{k-1},w_{k}]$ in $G_1$.
Let us define $w_0':=(\Gamma_1 g)'(w_0)=(w_0,v_1)$ and $w_k':=(\Gamma_2 g)'(w_k)$.
Then, we have for every $0 \le j \le k$,
$$
\frac12 \Big( k + \sqrt{d_{G_1}\big( w_j, \mathfrak{C}(G_1) \big)} \, \Big)
\le d_{G_1\times P_2}( w_0', w_k')
\le k + 2d_{G_1}\big( w_j, \mathfrak{C}(G_1) \big) + 4\d(G_1).
$$
\end{corollary}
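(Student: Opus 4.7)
The plan is to prove the two inequalities independently, using Lemma \ref{l:dc} for the lower bound and an explicit walk construction for the upper bound.

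For the lower bound, combine Lemma \ref{l:dc} (which gives $d_{G_1\times P_2}(w_0',w_k')>\sqrt{d_{G_1}(w_j,\mathfrak{C}(G_1))}\,$) with Corollary \ref{c:lower} applied to the projection on the first factor (which gives $d_{G_1\times P_2}(w_0',w_k')\ge d_{G_1}(w_0,w_k)=k$). Since $\max\{a,b\}\ge (a+b)/2$, these two bounds together yield
$$
d_{G_1\times P_2}(w_0',w_k')\ge \max\!\Big\{k,\,\sqrt{d_{G_1}(w_j,\mathfrak{C}(G_1))}\,\Big\}\ge \tfrac12\!\left(k+\sqrt{d_{G_1}(w_j,\mathfrak{C}(G_1))}\,\right).
$$

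For the upper bound, set $d:=d_{G_1}(w_j,\mathfrak{C}(G_1))$ and pick a minimal cycle $C^*\in\mathfrak{C}(G_1)$ together with a closest vertex $u\in V(C^*)$ to $w_j$; by Lemma \ref{l:oddcycle}, $L(C^*)\le 4\d(G_1)$. I would construct a $w_0,w_k$-walk $\s$ in $G_1$ by concatenating the subarc of $g$ from $w_0$ to $w_j$ (length $j$), a geodesic from $w_j$ to $u$ (length essentially $d$), one full traversal of $C^*$ (length $L(C^*)$), the reversed geodesic from $u$ back to $w_j$, and the subarc of $g$ from $w_j$ to $w_k$ (length $k-j$). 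Then $L(\s)\le k+2d+4\d(G_1)$, and Proposition \ref{p:distvert1} applied to $\s$ (paired with an appropriate walk of the same length in $P_2$) yields $d_{G_1\times P_2}(w_0',w_k')\le L(\s)$.

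The essential---and only delicate---point is parity. From the definitions of $w_0'$ and $w_k'$ via $\G_1 g$ and $\G_2 g$, and because walks in $P_2$ between equal (resp.\ distinct) vertices have even (resp.\ odd) length, Proposition \ref{p:distvert1} forces any walk in $G_1\times P_2$ from $w_0'$ to $w_k'$ to come from a $w_0, w_k$-walk in $G_1$ whose length has parity opposite to $k$. The walk $\s$ has length $k+2d+L(C^*)$, and since $C^*$ is minimal, hence of odd length, this parity is indeed opposite to $k$. Thus Lemma \ref{l:oddcycle} is what makes the argument quantitative: the existence of an odd cycle of length at most $4\d(G_1)$ near $w_j$ is precisely what allows the required parity change at bounded cost, and everything else reduces to routine concatenation of geodesics.
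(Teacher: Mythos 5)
Your proposal is correct and follows essentially the same route as the paper: the lower bound comes from combining Corollary \ref{c:lower} and Lemma \ref{l:dc} via $\max\{a,b\}\ge\frac12(a+b)$, and the upper bound from the walk $[w_0w_j]\cup\g\cup C\cup\g\cup[w_jw_k]$ through a minimal cycle, bounded by Lemma \ref{l:oddcycle}. Your explicit parity verification is exactly the content the paper compresses into ``one can check that $\Gamma_1 g'$ is a $w_0',w_k'$-walk.''
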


\begin{proof}
Corollary \ref{c:lower} and Lemma \ref{l:dc} give $d_{G_1\times P_2}( w_0', w_k') \ge k$ and $d_{G_1\times P_2}( w_0', w_k') \ge \sqrt{d_{G_1}\big( w_j, \mathfrak{C}(G_1) \big)} $,
and these inequalities provide the lower bound of $d_{G_1\times P_2}( w_0', w_k')$.

Consider a geodesic $\g$ joining $w_j$ and $C \in \mathfrak{C}(G_1)$ with $L(\g)=d_{G_1}( w_j, C )= d_{G_1}\big( w_j, \mathfrak{C}(G_1) \big)$
and the $w_0, w_k$-walk $$g':= [w_0w_j] \cup  \g \cup C \cup \g \cup [w_jw_k].$$
One can check that $\Gamma_1 g'$ is a $w_0', w_k'$-walk in $G_1\times P_2$, and so Lemma \ref{l:oddcycle} gives
$$d_{G_1\times P_2}( w_0', w_k')
\le L(\Gamma_1 g')
= L(g')
= k + 2d_{G_1}\big( w_j, \mathfrak{C}(G_1) \big) + L(C)
\le k + 2d_{G_1}\big( w_j, \mathfrak{C}(G_1) \big) + 4\d(G_1).$$
\end{proof}

If $[v_1,v_2]\in E(G)$, then we say that the point $x\in [v_1,v_2]$ with $d_{G}(x,v_1)=d_{G}(x,v_2)=1/2$ is the \emph{midpoint} of $[v_1,v_2]$.
Denote by $J(G)$ the set of vertices and midpoints of edges in $G$.
Consider the set $\mathbb{T}_1(G)$ of geodesic triangles $T$ in
$G$ that are cycles and
such that the three vertices of the triangle $T$ belong to $J(G)$, and
denote by $\delta_1(G)$ the infimum of the constants
$\lambda$ such that every triangle in $\mathbb{T}_1(G)$ is
$\lambda$-thin.

The following three results, which appear in \cite{BRS}, will be used throughout the paper.

\begin{theorem}\cite[Theorem 2.5]{BRS}
\label{t:main}
For every graph $G$ we have $\d_1(G)=\d(G)$.
\end{theorem}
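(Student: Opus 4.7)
The inequality $\delta_1(G)\le\delta(G)$ is immediate, since $\mathbb{T}_1(G)$ is a sub-family of all geodesic triangles in $G$ and so the infimum over $\mathbb{T}_1(G)$ is at most the supremum over all triangles. Hence the plan is to concentrate on the reverse inequality $\delta(G)\le\delta_1(G)$. Setting $\lambda:=\delta_1(G)$, I would pick an arbitrary geodesic triangle $T=\{x_1,x_2,x_3\}$ in $G$, a point $p$ on a side (say $p\in[x_1x_2]$), and prove $d_G(p,[x_2x_3]\cup[x_3x_1])\le\lambda$; the conclusion then follows by taking suprema.

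The first reduction is to assume $T$ is a cycle. If $T$ is not a cycle, two of its sides must overlap along a subarc ending at a common vertex; say $[x_1x_2]$ and $[x_2x_3]$ share a terminal arc $[y,x_2]$. If the point $p$ lies on this shared subarc then $d_G(p,[x_2x_3])=0$ and nothing is to be done; otherwise, replacing $T$ by $T'=\{x_1,y,x_3\}$ with sides $[x_1y]\subset[x_1x_2]$, $[yx_3]\subset[x_2x_3]$ and the original $[x_3x_1]$ gives a genuine geodesic triangle in which $d_G(p,\text{other sides of }T')=d_G(p,\text{other sides of }T)$. Repeating the peeling process a finite number of times reduces the problem to a cycle triangle.

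The second reduction is to move the three vertices and the point $p$ into $J(G)$ without decreasing $\delta(T)$. For $p$ fixed, consider the function $F(p):=d_G(p,[x_2x_3]\cup[x_3x_1])$ as $p$ moves inside a single edge of $[x_1x_2]$: because every edge of $G$ has length $1$ and distances in a graph are piecewise linear with slopes in $\{-1,0,+1\}$, $F$ is piecewise linear with integer slopes along that edge, and hence its maximum over the edge is attained at a point of $J(G)$. So we may assume $p\in J(G)$. Next, fix $p$, $x_2$, $x_3$ and vary $x_1$ inside its edge; for each position of $x_1$ we choose a geodesic $[x_3x_1]$ (whenever the choice is ambiguous we take one that minimizes $F$, so we work with the worst case for the thin condition). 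The same piecewise-linear analysis, applied to the two geodesics $[x_1x_2]$ and $[x_3x_1]$, shows that the resulting value of $F$ is piecewise linear in the position of $x_1$, so its supremum on the edge is again attained at a point of $J(G)$. Iterating for $x_2$ and $x_3$, and repeating the cycle-reduction of Step~1 if a new overlap appears, we end up with a triangle $T^*\in\mathbb{T}_1(G)$ and a point $p^*\in T^*$ with $d_G(p^*,\text{other sides of }T^*)\ge d_G(p,\text{other sides of }T)$.

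Since $T^*\in\mathbb{T}_1(G)$ is $\lambda$-thin by the definition of $\delta_1(G)$, we conclude $d_G(p,\text{other sides of }T)\le\lambda$, and taking the supremum over $T$ and $p$ gives $\delta(G)\le\lambda=\delta_1(G)$. The main obstacle I foresee is the piecewise-linearity argument in Step~2: one must handle the non-uniqueness of geodesics (which is done by always choosing the worst one) and check that when moving a vertex along its edge the thin-distance really does vary piecewise linearly with integer slopes — this ultimately relies on the fact that every edge of $G$ has length one, so points in $J(G)$ are exactly the places where the combinatorial structure of geodesics can change.
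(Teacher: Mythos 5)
The paper does not prove this statement at all --- it is quoted verbatim from \cite[Theorem 2.5]{BRS} --- so there is no internal proof to compare against; I can only judge your proposal on its own terms. The skeleton is right (the easy inequality $\d_1(G)\le\d(G)$, reduction to cycle triangles, then relocation of the corners into $J(G)$), but the decisive step, moving a corner $x_1$ along its edge into $J(G)$, is not actually carried out, and the piecewise-linearity argument you offer for it fails on two counts. First, as $x_1$ moves to a nearby point $x_1'$, the concatenation $[x_1'x_1]\cup[x_1x_2]$ need not be a geodesic (one would need $d(x_1',x_2)=d(x_1',x_1)+d(x_1,x_2)$, which is not automatic), so you do not even have a well-defined family of geodesic triangles parametrized by the position of $x_1$, let alone one along which $p$ remains on the side $[x_1x_2]$; moreover, where the combinatorial type of $[x_3x_1]$ changes, the set $[x_2x_3]\cup[x_3x_1]$ and hence $F$ can jump. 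Second, even granting that $F$ is piecewise linear with slopes in $\{-1,0,+1\}$, its maximum on an edge sits at a breakpoint such as $t^*=\frac12\bigl(1+d(B,S)-d(A,S)\bigr)$ with $S=[x_2x_3]\cup[x_3x_1]$, and this is a point of $J(G)$ only when $d(B,S)-d(A,S)$ is an integer --- which in general requires the other corners to lie in $J(G)$ already, precisely what you are trying to arrange. In short, the location of the breakpoints, not piecewise linearity, is the whole content of the theorem, and you have flagged it as ``the main obstacle'' rather than overcome it.

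A correct argument (this is what \cite{BRS} does) has to exploit the cycle hypothesis combinatorially: if a corner $x$ lies in the interior of an edge $[A,B]$, the two sides issuing from $x$ leave it in \emph{opposite} directions along that edge, and a case analysis on $d(x,A)$ and on where the extremal point $p$ sits shows that replacing $x$ by $A$, by $B$, or by the midpoint of $[A,B]$ (according to the case) again yields geodesic sides and does not decrease $\d(T)$. It is worth noting that in the present paper the authors, when they need the analogous statement with corners in $V(G)$ (Theorem \ref{t:main2}), only obtain the inequality $\d(G)\le 4\d_v(G)+1/2$ via a hexagon argument; the exact equality for $J(G)$ is genuinely delicate and does not follow from a soft continuity or linearity argument.
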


The next result will narrow the posible values for the hyperbolicity constant $\delta$.

\begin{theorem}\cite[Theorem 2.6]{BRS}
\label{l:cuartos}
Let $G$ be any graph. Then $\delta(G)$ is always a multiple of $1/4$.
\end{theorem}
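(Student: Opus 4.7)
The plan is to combine the reduction in Theorem \ref{t:main} with a discretization of the distance function along each geodesic side. By Theorem \ref{t:main}, $\delta(G)=\delta_1(G)=\sup\{\delta(T):T\in\mathbb{T}_1(G)\}$, so it suffices to show $\delta(T)\in\frac{1}{4}\mathbb{Z}_{\geq 0}$ for every $T\in\mathbb{T}_1(G)$. Granted this, if $\delta(G)<\infty$ then $\{\delta(T):T\in\mathbb{T}_1(G)\}\subseteq\frac{1}{4}\mathbb{Z}\cap[0,\delta(G)]$ is a finite set, so its supremum is attained as a maximum and lies in $\frac{1}{4}\mathbb{Z}$; if $\delta(G)=\infty$ there is nothing to prove.

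Fix $T=\{x_1,x_2,x_3\}\in\mathbb{T}_1(G)$ with sides $\gamma_1,\gamma_2,\gamma_3$, and parametrize each $\gamma_i$ by arc length from an endpoint. Because both endpoints lie in $J(G)$, every vertex of $G$ meeting $\gamma_i$ sits at an arc-length position in $\frac{1}{2}\mathbb{Z}$. The key observation is that for any fixed $q\in\gamma_j\cup\gamma_k$, the map $\tau\mapsto d_G(p(\tau),q)$ is the minimum of at most four ``path skeletons''---one through each pair (endpoint of the edge of $G$ containing $p(\tau)$, endpoint of the edge of $G$ containing $q$)---and each skeleton has the form $\pm\tau+c$, with slope $\pm 1$ and constant $c\in\frac{1}{2}\mathbb{Z}$. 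Indeed, $c$ is the sum of an integer distance between vertices of $G$ and a half-integer accounting for the position $\tau_0\in\frac{1}{2}\mathbb{Z}$ of the edge of $p(\tau)$ on $\gamma_i$ (together with, when $q$ is fixed, its own half-integer offset on its edge).

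Taking the infimum over $q\in\gamma_j\cup\gamma_k$, which is a finite concatenation of edges of $G$, reduces to a minimum over finitely many functions of $\tau$, each still of the form $\pm\tau+c$ with $c\in\frac{1}{2}\mathbb{Z}$. Thus $f_i(\tau):=d_G(p(\tau),\gamma_j\cup\gamma_k)$ is piecewise linear with slopes $\pm 1$ and half-integer local constants. Its breakpoints occur either at vertex positions of $G$ on $\gamma_i$ (in $\frac{1}{2}\mathbb{Z}$) or at intersections $\tau+c_1=-\tau+c_2$ of two linear pieces of opposite slopes; the latter yield $\tau=(c_2-c_1)/2\in\frac{1}{4}\mathbb{Z}$ with common value $(c_1+c_2)/2\in\frac{1}{4}\mathbb{Z}$. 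Since $f_i$ vanishes at the endpoints of $\gamma_i$ and is continuous on a compact interval, its maximum $M_i$ is attained at an interior peak, hence $M_i\in\frac{1}{4}\mathbb{Z}_{\geq 0}$ and $\delta(T)=\max\{M_1,M_2,M_3\}\in\frac{1}{4}\mathbb{Z}_{\geq 0}$.

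The main obstacle is the case analysis establishing that all ``local constants'' in $d_G(p(\tau),q)$ lie in $\frac{1}{2}\mathbb{Z}$; one must separately treat the cases where $p(\tau)$ and $q$ are vertices, midpoints, or interior points of edges, and verify that the minimum over the four skeletons really does define a piecewise linear function of the stated form on each subinterval in $\tau$. Once this bookkeeping is in place, the rest is elementary linear algebra on slopes $\pm 1$ together with the final compactness argument reducing the supremum over $\mathbb{T}_1(G)$ to a maximum over a finite subset of $\frac{1}{4}\mathbb{Z}$.
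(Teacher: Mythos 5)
This statement is quoted from \cite{BRS}; the paper under review contains no proof of it, so there is nothing internal to compare your argument against. Your proof is correct and is essentially the standard argument behind the cited result: reduce via Theorem \ref{t:main} to triangles with corners in $J(G)$, observe that all relevant distances between points of $J(G)$ lie in $\frac{1}{2}\mathbb{Z}$, and conclude that the concave, piecewise-linear function $\tau\mapsto d_G(p(\tau),\gamma_j\cup\gamma_k)$ (slopes $\pm 1$, half-integer offsets) attains its maxima at values in $\frac{1}{4}\mathbb{Z}$, after which discreteness turns the supremum over $\mathbb{T}_1(G)$ into an attained maximum. Two points in your bookkeeping deserve explicit treatment: first, the reduction of the infimum over the continuum $q\in\gamma_j\cup\gamma_k$ to a minimum over the finitely many points of $J(G)$ on those sides --- this holds because, for fixed $p$, the restriction of $d_G(p,\cdot)$ to each traversed edge-arc is a ``tent'' whose minimum over the arc is attained at an endpoint or vertex, all of which lie in $J(G)$; second, the case where $p(\tau)$ and $q$ sit on the same edge of $G$ near a corner of $T$, where the direct within-edge path is a fifth ``skeleton'' not among your four --- its contribution to the infimum is $\pm\tau+c$ with $c$ the half-integer position of the corner, so the conclusion is unaffected. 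Neither issue is a genuine gap.
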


\begin{theorem}\cite[Theorem 2.7]{BRS}
\label{t:3}
For any hyperbolic graph $G$, there exists a geodesic triangle $T\in \mathbb{T}_1(G)$ such that $\delta(T)=\delta(G)$.
\end{theorem}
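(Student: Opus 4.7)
The plan is to reduce the statement to a discreteness argument on the values of $\delta(T)$ for $T\in\mathbb{T}_1(G)$. By Theorem~\ref{t:main}, $\delta(G)=\sup_{T\in\mathbb{T}_1(G)}\delta(T)$, and this supremum is finite since $G$ is hyperbolic. I would pick a sequence $\{T_n\}\subset\mathbb{T}_1(G)$ with $\delta(T_n)\to \delta(G)$. For each $n$, the triangle $T_n$ is a compact topological cycle, so the continuous function $f_n(p):=d_G\bigl(p,\,\text{union of the two sides of }T_n\text{ not containing }p\bigr)$ attains its maximum on $T_n$, and this maximum equals $\delta(T_n)$.

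The crux is the following discreteness claim: for every $T\in\mathbb{T}_1(G)$, one has $\delta(T)\in \tfrac14\ZZ_{\ge 0}$. Granting this, the sequence $\{\delta(T_n)\}$ lies in the discrete set $\tfrac14\ZZ\cap [0,\delta(G)]$ and converges to $\delta(G)$, so for all sufficiently large $n$ the inequality $\delta(G)-\delta(T_n)<1/4$ forces $\delta(T_n)=\delta(G)$. Any such $T_n$ is then the required geodesic triangle in $\mathbb{T}_1(G)$.

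To establish the quarter-integer claim, I would work one side of $T$ at a time. Fix a side $\gamma$ of $T$ with endpoints in $J(G)$ and parameterize it by arclength $t\in[0,L]$, where $L\in \tfrac12\ZZ$. For a point $q$ in the interior of an edge $e$ lying on one of the other two sides, the function $t\mapsto d_G(\gamma(t),q)$ is piecewise affine with slopes in $\{-1,0,1\}$, because any shortest path from $\gamma(t)$ to $q$ must leave the edge containing $\gamma(t)$ through one of its two endpoints and enter $e$ through one of its two endpoints. Taking infimum over $q$ running along the opposite sides, the function $\varphi(t):=d_G(\gamma(t),\,\text{opposite sides})$ is $1$-Lipschitz and piecewise affine, and its breakpoints fall into two types: those where $\gamma(t)\in J(G)$, and ``equidistance'' breakpoints interior to an edge where two competing nearest points on the opposite sides become equidistant from $\gamma(t)$. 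A short case check shows that at every local maximum of $\varphi$ the value is either a distance between two points of $J(G)$ (a multiple of $1/2$) or receives exactly a $1/4$ offset from such a distance at an equidistance breakpoint; in either case the local-maximum value lies in $\tfrac14\ZZ$.

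The main obstacle is this last case analysis: one has to track carefully how the identity and location of the minimizing $q$ on the opposite sides evolves as $\gamma(t)$ crosses a vertex or midpoint of $\gamma$, and verify that every local maximum of $\varphi$ lands in $\tfrac14\ZZ$. Once that technical point is settled, the discreteness step of the second paragraph closes the proof.
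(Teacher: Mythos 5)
The paper does not prove this statement itself (it is quoted from \cite[Theorem 2.7]{BRS}), but your argument is correct and is essentially the standard one: it is the same discreteness strategy the paper uses to prove the analogous attainment claim in Theorem \ref{t:main2} (there with triangles in $\mathbb{T}_v(G)$ and multiples of $1/2$), and the quarter-integer claim you isolate is exactly the content behind Theorem \ref{l:cuartos}. Your case analysis does close: at a vertex or midpoint of $\gamma$ the distance to the opposite sides lies in $\tfrac12\ZZ$, and an interior equidistance maximum has value $\tfrac12\bigl(1+d(u,Y)+d(v,Y)\bigr)\in\tfrac14\ZZ$, so combined with $\delta_1(G)=\delta(G)$ from Theorem \ref{t:main} the supremum is over a finite subset of $\tfrac14\ZZ\cap[0,\delta(G)]$ and is attained.
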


Consider the set $\mathbb{T}_v(G)$ of geodesic triangles $T$ in
$G$ that are cycles and
such that the three vertices of the triangle $T$ belong to $V(G)$, and
denote by $\delta_v(G)$ the infimum of the constants
$\lambda$ such that every triangle in $\mathbb{T}_v(G)$ is
$\lambda$-thin.

\begin{theorem} \label{t:main2}
For every graph $G$ we have $\d_v(G) \le \d(G) \le 4\d_v(G)+1/2$.
Hence, $G$ is hyperbolic if and only if $\d_v(G) < \infty$.
Furthermore, if $G$ is hyperbolic, then $\delta_v(G)$ is always a multiple of $1/2$ and there exist a geodesic triangle $T=\{x,y,z\}\in \mathbb{T}_v(G)$ and $p\in [xy] \cap J(G)$ such that
$d(p,[xz] \cup [zy])=\delta(T)=\delta_v(G)$.
\end{theorem}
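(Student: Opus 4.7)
The lower bound $\d_v(G) \le \d(G)$ is immediate. Since $V(G) \subseteq J(G)$, every triangle in $\mathbb{T}_v(G)$ also belongs to $\mathbb{T}_1(G)$, whence $\d_v(G) \le \d_1(G)$, and the latter equals $\d(G)$ by Theorem \ref{t:main}.

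For the upper bound $\d(G) \le 4\d_v(G) + 1/2$, I would take a triangle $T = \{x,y,z\} \in \mathbb{T}_1(G)$ and a point $p \in [xy]$, and aim to bound $d(p, [xz] \cup [zy])$. My plan is to approximate each of the three ``vertices'' $x,y,z$ of $T$ (which may be midpoints of edges) by actual vertices $x',y',z' \in V(G)$ at distance at most $1/2$, forming an auxiliary triangle $T' \in \mathbb{T}_v(G)$, and then to transfer the $\d_v$-thinness of $T'$ back to $T$ by shadowing along the short segments connecting each original ``vertex'' to its vertex approximation. In the favorable case, where at each midpoint the two geodesics leaving it head through the same endpoint of the underlying edge, the sides of $T'$ agree with those of $T$ up to short end-segments and the transfer yields only a small additive error. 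I expect the main obstacle to be the opposite case: when at some midpoint ``vertex'' the two geodesics head through different endpoints, one must subdivide $T$ along the relevant mid-edge and apply $\d_v$-thinness separately to the resulting sub-triangles in $\mathbb{T}_v(G)$. Composing these bounds across the (up to three) midpoint vertices of $T$, together with an approximation of $p$ itself by a nearby vertex, is what I expect to produce exactly the factor $4\d_v(G) + 1/2$.

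The equivalence ``$G$ is hyperbolic if and only if $\d_v(G) < \infty$'' then follows immediately from the two-sided bound. For the parity statement, I would use that $d_G(u,v) \in \mathbb{N}$ for $u,v \in V(G)$, so the sides of any $T = \{x,y,z\} \in \mathbb{T}_v(G)$ have integer lengths and points of $J(G)$ on a side lie at half-integer distances from its endpoints. Since the distance from a fixed point in $G$ to a point moving along a geodesic is piecewise linear with slopes $\pm 1$, its minimum over $[xz] \cup [zy]$ is attained at a point of $J(G)$, so $d(p, [xz] \cup [zy]) \in \tfrac{1}{2}\mathbb{N}$ for every $p \in [xy] \cap J(G)$. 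A reduction in the spirit of Theorem \ref{t:main} (showing that the supremum defining $\d(T)$ can be taken over $p \in J(G)$) then gives $\d(T) \in \tfrac{1}{2}\mathbb{N}$ for every $T \in \mathbb{T}_v(G)$, and, if $\d_v(G) < \infty$, a finite supremum of elements of $\tfrac{1}{2}\mathbb{N}$ is attained, which simultaneously yields an extremal triangle $T \in \mathbb{T}_v(G)$ and the required extremal point $p \in [xy] \cap J(G)$.
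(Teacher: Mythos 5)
Your lower bound and your argument for the half-integrality and attainment of $\d_v(G)$ essentially match the paper's. The problem is the upper bound $\d(G)\le 4\d_v(G)+1/2$, which is the heart of the theorem and which your proposal does not actually establish. Two concrete gaps. First, replacing $x,y,z$ by nearby vertices $x',y',z'$ and forming a new triangle $T'\in\mathbb{T}_v(G)$ does not work as stated: the sides $[x'y']$, $[y'z']$, $[z'x']$ are new geodesics with no a priori relation to the sides of $T$ (closeness of geodesics with nearby endpoints is itself a hyperbolicity-type statement, i.e.\ part of what is being proved), and even in your ``favorable case'' the concatenation of a subarc of $[xy]$ with a half-edge ending at a vertex need not be a geodesic. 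The paper avoids all of this: it keeps the sides of $T$ and truncates them at the vertices $x_1,x_2,y_1,y_2,z_1,z_2$ adjacent to $x,y,z$, obtaining a geodesic \emph{hexagon} $H=\{x_2,y_1,y_2,z_1,z_2,x_1\}$ whose six sides are subarcs of the sides of $T$ together with the three edges containing $x,y,z$; a geodesic hexagon decomposes into four geodesic triangles with vertices in $V(G)$, which is where the factor $4$ comes from, and the three half-edges contribute the $+1/2$. Your ``subdivide along the mid-edge'' gestures in this direction, but you never carry it out, and your own phrasing (``is what I expect to produce exactly the factor $4\d_v(G)+1/2$'') concedes that the constant is asserted rather than derived.

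Second, even granting such a subdivision, the resulting triangles are in general \emph{not} cycles, so they do not belong to $\mathbb{T}_v(G)$ and you cannot apply $\d_v$-thinness to them directly. The paper must first introduce the class $\mathbb{T}_v'(G)$ of all geodesic triangles with vertices in $V(G)$ (cycles or not) and invoke the argument of \cite[Lemma 2.1]{RT1} to get $\d_v'(G)=\d_v(G)$ before the hexagon decomposition can be exploited. This reduction is absent from your proposal and is not cosmetic: the chain of inequalities breaks at the first sub-triangle that fails to be a cycle. (A minor further point: the paper also treats the non-hyperbolic case separately to conclude $\d_v(G)=\infty$ there; your route via bounding $\d(T)$ for every $T\in\mathbb{T}_1(G)$ and using Theorem \ref{t:main} would cover this, but only once the upper bound itself is actually proved.)
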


\begin{proof}
The inequality $\d_v(G) \le \d(G)$ is direct.

Consider the set $\mathbb{T}_v'(G)$ of geodesic triangles $T$ in $G$
such that the three vertices of the triangle $T$ belong to $V(G)$, and
denote by $\delta_v'(G)$ the infimum of the constants
$\lambda$ such that every triangle in $\mathbb{T}_v'(G)$ is
$\lambda$-thin.
The argument in the proof of \cite[Lemma 2.1]{RT1} gives that $\delta_v'(G)=\delta_v(G)$.

In order to prove the upper bound of $\d(G)$, assume first that $G$ is hyperbolic.
We can assume $\delta_v'(G) < \infty$, since otherwise the inequality is direct.
By Theorem \ref{t:3}, there exists a geodesic triangle $T=\{x,y,z\}$ that is a cycle with $x,y,z \in J(G)$ and $p\in [xy]$ such that $d(p,[xz] \cup [zy])=\delta(T)=\delta(G)$.
Assume that $x,y,z \in J(G) \setminus V(G)$ (otherwise, the argument is simpler).
Let $x_1,x_2,y_1,y_2,z_1,z_2 \in T \cap V(G)$ such that $x \in [x_1,x_2], y \in [y_1,y_2], z \in [z_1,z_2]$ and $x_2,y_1 \in [xy], y_2,z_1 \in [yz], z_2,x_1 \in [xz]$.
Since $H:=\{x_2,y_1, y_2,z_1, z_2,x_1\}$ is a geodesic hexagon with vertices in $V(G)$, it is $4\d_v'(G)$-thin and every point $w\in [y_1,y_2] \cup [y_2z_1] \cup [z_1,z_2] \cup [z_2x_1] \cup [x_1,x_2]$ verifies
$d(w,[xz] \cup [zy]) \le 1/2$, we have
$$
\delta(G)=d(p,[xz] \cup [zy]) \le d(p,[y_1,y_2] \cup [y_2z_1] \cup [z_1,z_2] \cup [z_2x_1] \cup [x_1,x_2]) + 1/2 \le 4\d_v'(G) + 1/2 = 4\d_v(G) + 1/2.
$$
Assume now that $G$ is not hyperbolic.
Therefore, for each $M>0$ there exists a geodesic triangle $T=\{x,y,z\}$ that is a cycle with $x,y,z \in J(G)$ and $p\in [xy]$ such that $d(p,[xz] \cup [zy]) \ge M$.
The previous argument gives $M \le 4\d_v(G) + 1/2$ and, since $M$ is arbitrary, we deduce $\d_v(G) = \infty =\d(G)$.

\smallskip

Finally, consider any geodesic triangle $T=\{x,y,z\}$ in $\mathbb{T}_v(G)$.
Since $d(p,[xz] \cup [zy]) = d(p,([xz] \cup [zy])\cap V(G))$,
$d(p,[xz] \cup [zy])$ attains its maximum value when $p\in J(G)$.
Hence, $\delta(T)$ is a multiple of $1/2$ for every geodesic triangle $T \in \mathbb{T}_v(G)$.
Since the set of non-negative numbers that are multiple of $1/2$ is a discrete set,
if $G$ is hyperbolic, then $\delta(G)$ is a multiple of $1/2$ and there exist a geodesic triangle $T=\{x,y,z\}\in \mathbb{T}_v(G)$ and $p\in [xy] \cap J(G)$ such that
$d(p,[xz] \cup [zy])=\delta(T)=\delta_v(G)$.
This finishes the proof.
\end{proof}

\begin{theorem}\label{t:non-hyperbolic} If $G_1$ is a non-hyperbolic graph, then $G_1 \times P_2$ is not hyperbolic.
\end{theorem}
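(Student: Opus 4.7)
The plan is to split into two cases according to whether $G_1$ has an odd cycle. If $G_1$ has no odd cycle, then $G_1$ is bipartite, and since $P_2$ is itself a non-trivial bounded bipartite graph, Theorem \ref{t:no-odd} applies and yields that $G_1\times P_2$ is hyperbolic if and only if $G_1$ is hyperbolic; by hypothesis $G_1$ is non-hyperbolic, hence so is $G_1\times P_2$.

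Assume now that $G_1$ has some odd cycle, so that by Theorem \ref{t:conect} the direct product $G_1\times P_2$ is connected. My goal is to produce in $G_1\times P_2$ geodesic triangles with arbitrarily large thin constant. Given $M>0$, applying Theorem \ref{t:main2} to the non-hyperbolic graph $G_1$ gives a geodesic triangle $T=\{x,y,z\}\in\mathbb{T}_v(G_1)$ and a vertex $p\in [xy]\cap V(G_1)$ with $d_{G_1}(p,[xz]\cup [yz])\ge M$. Write $a=d_{G_1}(x,y)$, $b=d_{G_1}(y,z)$, $c=d_{G_1}(x,z)$, and split according to the parity of $a+b+c$. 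When $a+b+c$ is even, set $x':=(x,v_1)$ and construct $T'=\{x',y',z'\}$ in $G_1\times P_2$ by lifting each side of $T$ in turn via $\Gamma_1$ or $\Gamma_2$, chosen according to the second coordinate reached by the previous lift. By Remark \ref{r:geodesic} every lifted side is a geodesic in $G_1\times P_2$, and the even-perimeter hypothesis is exactly the condition that the three lifts close up into a geodesic triangle $T'$ whose sides project under $\pi_1$ onto $[xy]$, $[yz]$, $[xz]$. If $p'\in [x'y']$ denotes the lift of $p$, then Corollary \ref{c:lower} yields
$$
d_{G_1\times P_2}(p',q')\ \ge\ d_{G_1}(p,\pi_1(q'))\ \ge\ M
$$
for every $q'\in [x'z']\cup [y'z']$, so $\delta(G_1\times P_2)\ge M$, as desired.

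When $a+b+c$ is odd, the previous lifts do not close into a triangle, and the plan is to replace $T$ by a neighbouring triangle of even perimeter at the cost of only a bounded loss in the fatness witness. Exploiting that $G_1$ is non-bipartite, the idea is to use a short excursion through an odd cycle of $G_1$ near one vertex of $T$ to shift that vertex so that exactly one of the three pairwise distances changes by an odd amount of size at most $g_I(G_1)$. This produces an even-perimeter geodesic triangle $T^{*}$ whose fatness witness survives at distance at least $M-O(g_I(G_1))$ from its two other sides; the previous subcase then applies to $T^{*}$ with the parameter $M$ replaced by $M-O(g_I(G_1))$, and letting $M\to\infty$ concludes the proof. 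The hard part of the plan is precisely this parity-fixing step: one has to guarantee that the odd-cycle excursion can always be inserted in a way compatible with the original triangle, and that the fatness witness $p$ can be transferred to a nearby vertex of $T^{*}$ with only a loss depending on $g_I(G_1)$; this is where the interplay between the geometry of $T$ in $G_1$ and the distribution of odd cycles of $G_1$ becomes delicate.
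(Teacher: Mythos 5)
Your first two reductions are fine: the bipartite case via Theorem \ref{t:no-odd} is a valid shortcut, and your even-perimeter case is essentially the paper's first step (a fat geodesic triangle of $G_1$ that is an even cycle lifts via $\Gamma_1$ to a geodesic triangle of $G_1\times P_2$, and Corollary \ref{c:lower} transfers the fatness). The gap is in the odd-perimeter case, and it is not a technicality you can wave at: the parity-fixing step as you describe it is false. You claim the excursion costs $O(g_I(G_1))$, i.e.\ is controlled by the odd girth. But the shortest odd cycle of $G_1$ realizing $g_I(G_1)$ may lie arbitrarily far from the fat triangle $T$; there is no reason for any short odd cycle to pass near a vertex of $T$. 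The true cost of changing the parity of a closed walk through a vertex $w$ is governed by the length of the shortest odd closed walk through $w$, which (see Lemma \ref{l:dc}) grows with $d_{G_1}(w,\mathfrak{C}(G_1))$ and is unbounded in general. Equivalently, in $G_1\times P_2$ the two lifts $(w,v_1)$ and $(w,v_2)$ can be arbitrarily far apart, so the ``neighbouring triangle of even perimeter'' you want to produce may only exist at a cost comparable to, or exceeding, the fatness $M$ you are trying to preserve. (Note also that the one odd cycle you do have for free near $T$ is $T$ itself, but inserting it destroys geodesicity of the sides, so it does not rescue the construction.)

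The paper resolves exactly this difficulty by a different device: instead of repairing the parity of a single fat odd triangle, it takes \emph{two} fat odd triangles $T_1$, $T_2$ (one of controlled diameter, one extremely fat), joins them by a shortest geodesic $g$, and observes that the closed walk $T_1\cup g\cup T_2\cup g^{-1}$ is an \emph{even} cycle, so $\Gamma_1$ lifts it to a geodesic decagon in $G_1\times P_2$; a case analysis (according to whether the fat witness of $T_2$ is near $T_1\cup g$, and whether $g$ is short or long) then locates a point of the decagon at distance greater than $8\delta$ from the union of the other nine sides, contradicting $\delta$-hyperbolicity. If you want to complete your argument you either need to import this two-triangle trick or find some other way to produce even cycles carrying the fatness; the single-triangle parity repair cannot work with a loss bounded by $g_I(G_1)$.
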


\begin{proof} Since $G_1$ is not hyperbolic, by Theorem \ref{t:main2}, given any $R>0$ there is a geodesic triangle $T=\{x,y,z\}$ that is a cycle, with $x,y,z\in V(G_1)$ and such that $T$ is not $R$-thin. Therefore, there exists some point $m\in T$, let us assume that $m\in [xy]$, such that $d_{G_1}(m,[yz]\cup [zx])>R$.

Seeking for a contradiction let us assume that $G_1\times P_2$ is $\delta$-hyperbolic.

Suppose that for some $R>\delta$, there is a geodesic triangle $T=\{x,y,z\}$ that is an even cycle in $G_1$, with $x,y,z\in V(G_1)$ and such that $T$ is not $R$-thin. Consider the (closed) path $\Lambda=[xy]\cup [yz]\cup [zx]$. Then, since $T$ has even length, the path $\Gamma_1\Lambda$ defines a cycle in $G_1\times P_2$. Let $\gamma_1$,
$\gamma_2$, $\gamma_3$ be the  paths in $\Gamma_1\Lambda$ corresponding to $[xy], [yz], [zx]$, respectively. By Corollary \ref{c:lower}, the curves $\gamma_1$, $\gamma_2$ and $\gamma_3$ are geodesics, and
$d_{G_1\times P_2}\big((\Gamma_1\Lambda)'(m),\gamma_2\cup \gamma_3\big)>\delta$, leading to contradiction.

Suppose that for every $R>0$, there is a geodesic triangle $T=\{x,y,z\}$ which is an odd cycle, with $x,y,z\in V(G_1)$ and such that $T$ is not $R$-thin.

Let $T_1=\{x,y,z\}$ be a geodesic triangle as above  and let us assume that $\diam(T_1)=D>8\delta$.

Let $T_2=\{x',y',z'\}$ be another geodesic triangle as above such that $T_2$ is not $3(D+8\delta)$-thin, this is, there is a point $m$ in one of the sides, let us call it $\sigma$, of $T_2$ such that $d_{G_1}(m,T_2\backslash \sigma)>3(D+8\delta)$.

Let $g=[w_0w_k]$ with $w_0\in T_1$ and $w_k\in T_2$ be a shortest geodesic in $G_1$ joining $T_1$ and $T_2$ (if $T_1$ and $T_2$ intersect, just assume that $g$ is a single vertex, $w_0=w_k$, in the intersection).

Let us assume that $w_0\in [xz]$ and $w_k\in[x'z']$. Then, let us consider the cycle $C$ in $G_1$ given by the union of the geodesics in $T_1$, $g$, the geodesics in $T_2$ and the inverse of $g$ from $w_k$ to $w_0$, this is,
$$C:=[w_0x]\cup [xy]\cup [yz]\cup[zw_0]\cup [w_0w_k] \cup [w_kx']\cup [x'y']\cup [y'z']\cup [z'w_k]\cup [w_kw_0].$$

Since $T_1,T_2$ are odd cycles,  $C$ is an even cycle. Therefore, $\Gamma_1C$ defines a cycle in $G_1\times P_2$. Moreover, by
Remark \ref{r:geodesic}, $\Gamma_1C$ is a geodesic decagon in $G_1\times P_2$ with sides $\gamma_1=(\Gamma_1C)'([w_0x])$, $\gamma_2=(\Gamma_1C)'([xy])$, $\gamma_3=(\Gamma_1C)'([yz])$, $\gamma_4=(\Gamma_1C)'([zw_0])$, $\gamma_5=(\Gamma_1C)'([w_0w_k])$, $\gamma_6=(\Gamma_1C)'([w_kx'])$, $\gamma_7=(\Gamma_1C)'([x'y'])$, $\gamma_8=(\Gamma_1C)'([y'z'])$, $\gamma_9=(\Gamma_1C)'([z'w_k])$ and $\gamma_{10}=(\Gamma_1C)'([w_kw_0])$.

Since we are assuming that $G_1\times P_2$ is $\delta$-hyperbolic, then for every $1\leq i \leq 10$ and every point $p \in \gamma_i$,  $d_{G_1\times P_2}(p,C\backslash \gamma_i)\le 8\delta$.

Let $p:=(\Gamma_1C)'(m)$.

Case 1. Suppose that $d_{G_1}(m,T_1\cup g)>8\delta$.

By assumption, $d_{G_1}(m,T_2\backslash \sigma)>8\delta$. If $\sigma= [x'y']$ (resp. $\sigma= [y'z']$), then $p\in \gamma_7$ (resp. $p\in \gamma_8$) and, by Corollary \ref{c:lower}, $d_{G_1\times P_2}(p,C\backslash \gamma_7)>8\delta$ (resp. $d_{G_1\times P_2}(p,C\backslash \gamma_8)>8\delta$) leading to contradiction. If $\sigma=[x'z']$, since $[x'z']=[x'w_k]\cup [w_kz']$, let us assume $m\in [x'w_k]$. Then,  since $d_{G_1}(m,w_k)>8\delta$, it follows that $d_{G_1}(m,[w_kz'])>8\delta$. Thus,  $p\in \gamma_6$ and, by Corollary \ref{c:lower}, $d_{G_1\times P_2}(p,C\backslash \gamma_6)>8\delta$ leading to contradiction.

Case 2. Suppose that $d_{G_1}(m,T_1\cup g)\le 8\delta$ and $L(g)\leq 8\delta$. Then, for every point $q$ in $T_1\cup g$, $d_{G_1}(m,q)\leq 8\delta +D +8\delta$. In particular, $d_{G_1}(m,w_k)\leq 8\delta +D +8\delta$. Therefore, $m\in [x'z']$ and let us assume that $m\in [x'w_k]$. Since $d_{G_1}(m,x')\ge d_{G_1}(m,[x'y']\cup[y'z'])>3(D+8\delta)$, there is a point $m'\in [x'm]\subset [x'w_k]$ such that $d_{G_1}(m,m')=2(D+8\delta)$. Then, $d_{G_1}(m',T_1\cup g)\ge 2(D+8\delta)-D-8\delta-8\delta=D>8\delta$. Also, it is trivial to check that $d_{G_1}(m',[x'y']\cup[y'z'])>3(D+8\delta)-2(D+8\delta)>8\delta$ and since $[x'z']$ is a geodesic, $d_{G_1}(m',[z'w_k])>8\delta$. Thus,  if $p':=(\Gamma_1C)'(m')$, then $p'\in \gamma_6$ and, by Corollary \ref{c:lower},  $d_{G_1\times P_2}(p',C\backslash \gamma_6)>8\delta$ leading to contradiction.

Case 3. Suppose that $d_{G_1}(m,T_1\cup g)\le 8\delta$ and $L(g)> 8\delta$. Since $g$ is a shortest geodesic in $G_1$ joining $T_1$ and $T_2$, this implies that $d_{G_1}(T_1,T_2)>8\delta$ and $d_{G_1}(m,[w_0w_k])\leq 8\delta$. Moreover, $d_{G_1}(m,w_k)\leq 16\delta$. Otherwise, there is a point $q\in [w_0w_k]$ such that $d_{G_1}(m,q)\le 8\delta$ and $d_{G_1}(q,w_k)>8 \delta$ which means that $d_{G_1}(q,w_0)< d_{G_1}(w_0,w_k) - 8 \delta$ and $d_{G_1}(m,w_0)< d_{G_1}(w_0,w_k)$ leading to contradiction.

Since $d_{G_1}(m,w_k)\leq 16\delta$, $m\in [x'z']$. Let us assume that $m\in [x'w_k]$. Since $d_{G_1}(m,[x'y']\cup[y'z'])>3(D+8\delta)$, there is a point $m'\in [x'm]\subset [x'w_k]$ such that $d_{G_1}(m,m')=2(D+8\delta)$. Let us see that $d_{G_1}(m', [w_0w_k])>8\delta$. Suppose there is some $q\in [w_0w_k]$ such that $d_{G_1}(m',q)\leq 8\delta$. Since $m'\in T_2$ and $g$ is a shortest geodesic joining $T_1$ and $T_2$, $d_{G_1}(q,w_k)\leq 8\delta$. However,
$32\d < 2(D+8\delta)=d_{G_1}(m',m)\leq d_{G_1}(m',q)+d_{G_1}(q,w_k)+d_{G_1}(w_k,m)\le 8\delta + 8\delta + 16\delta$ which is a contradiction. Hence, $d_{G_1}(m', [w_0w_k])>8\delta$. Also, it is trivial to check that $d_{G_1}(m',[x'y']\cup[y'z'])>3(D+8\delta)-2(D+8\delta)>8\delta$ and since
$[x'z']$ is a geodesic, $d_{G_1}(m',[z'w_k])>8\delta$. Thus,  if $p':=(\Gamma_1C)'(m')$, then $p'\in \gamma_6$ and, by Corollary \ref{c:lower},
$d_{G_1\times P_2}(p',C\backslash \gamma_6)>8\delta$ leading to contradiction.
\end{proof}

Proposition \ref{p:unbounded}, Lemma \ref{l:P2} and Theorems \ref{t:G2odd}, \ref{t:no-odd} and \ref{t:non-hyperbolic} have the following consequence.

\begin{corollary}\label{c:non-hyperbolic} If $G_1$ is a non-hyperbolic graph and $G_2$ is some non-trivial graph, then $G_1 \times G_2$ is not hyperbolic.
\end{corollary}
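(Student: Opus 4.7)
The plan is a straightforward case analysis that uses each of the four cited results in exactly one case. Assume throughout that $G_1$ is non-hyperbolic. In particular $G_1$ must be unbounded, since every bounded graph is hyperbolic.

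First I would dispose of the case where $G_2$ is unbounded: since $G_1$ is also unbounded, Proposition \ref{p:unbounded} immediately gives that $G_1\times G_2$ is not hyperbolic. So from here on we may assume $G_2$ is bounded (and non-trivial by hypothesis).

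Next I would split on the parity structure of the two factors. If $G_2$ has some odd cycle, then Theorem \ref{t:G2odd} says $G_1\times G_2$ is hyperbolic if and only if $G_1$ is, so the non-hyperbolicity of $G_1$ transfers directly. If instead $G_2$ has no odd cycle and $G_1$ also has no odd cycle, then Theorem \ref{t:no-odd} applies and yields the same conclusion. The only remaining case is when $G_2$ has no odd cycle but $G_1$ does; here I would invoke Lemma \ref{l:P2}, which provides the inequality $\delta(G_1\times P_2)\le \delta(G_1\times G_2)$, and then use Theorem \ref{t:non-hyperbolic} to conclude that $\delta(G_1\times P_2)=\infty$, forcing $\delta(G_1\times G_2)=\infty$.

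There is no real obstacle in this argument, since the four previous results have already been stated in the precisely matched hypotheses we need; the only point worth being careful about is the initial observation that a non-hyperbolic graph is automatically unbounded, which is what lets the $G_2$-unbounded case be reduced to Proposition \ref{p:unbounded} rather than requiring a separate argument. The structure is simply: Proposition \ref{p:unbounded} handles ``$G_2$ unbounded'', Theorem \ref{t:G2odd} handles ``$G_2$ bounded with an odd cycle'', Theorem \ref{t:no-odd} handles ``both factors bipartite with $G_2$ bounded'', and Lemma \ref{l:P2} combined with Theorem \ref{t:non-hyperbolic} handles the last mixed case ``$G_1$ has an odd cycle, $G_2$ is bounded bipartite''. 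Together they exhaust all possibilities, which is exactly the logical content of the statement ``this corollary follows from Proposition \ref{p:unbounded}, Lemma \ref{l:P2}, and Theorems \ref{t:G2odd}, \ref{t:no-odd}, \ref{t:non-hyperbolic}''.
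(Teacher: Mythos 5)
Your proposal is correct and is exactly the argument the paper intends: the paper gives no written proof but states that the corollary is a consequence of Proposition \ref{p:unbounded}, Lemma \ref{l:P2} and Theorems \ref{t:G2odd}, \ref{t:no-odd} and \ref{t:non-hyperbolic}, and your four-way case analysis (keyed on boundedness of $G_2$ and the presence of odd cycles in each factor, after noting that non-hyperbolic implies unbounded via $\d(G)\le \diam(G)/2$) is precisely how those five results combine to exhaust all possibilities.
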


Proposition \ref{p:unbounded} and Corollary \ref{c:non-hyperbolic} provide a necessary condition for the hyperbolicity of $G_1\times G_2$.

\begin{theorem}\label{t:neccon}
Let $G_1, G_2$ be non-trivial graphs. If $G_1\times G_2$ is hyperbolic, then one factor graph is hyperbolic and the other one is bounded.
\end{theorem}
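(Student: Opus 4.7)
The plan is to proceed via the contrapositive, assembling the two non-hyperbolicity results already established in the excerpt into the required necessary condition. Suppose $G_1\times G_2$ is hyperbolic, where by hypothesis both $G_1$ and $G_2$ are non-trivial; I need to show that (up to swapping the roles of the factors) one of $G_1, G_2$ is hyperbolic and the other is bounded.

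First, I would invoke the contrapositive of Proposition \ref{p:unbounded}: were both $G_1$ and $G_2$ unbounded, the product $G_1\times G_2$ would fail to be hyperbolic, contradicting the hypothesis. Hence at least one factor is bounded. Using the commutativity of the direct product, which was recorded right after Definition \ref{def:DP}, I can relabel so that, without loss of generality, $G_2$ is bounded. A bounded graph has finite diameter and is therefore automatically hyperbolic.

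Second, I would invoke the contrapositive of Corollary \ref{c:non-hyperbolic}: if $G_1$ were non-hyperbolic, then since $G_2$ is a non-trivial graph by assumption, Corollary \ref{c:non-hyperbolic} would again force $G_1\times G_2$ to be non-hyperbolic, another contradiction. Thus $G_1$ must be hyperbolic. Combining the two conclusions yields that one factor ($G_1$) is hyperbolic while the other ($G_2$) is bounded, which is the stated assertion.

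The main point of this theorem is organizational rather than technical: the real work has already been done in Proposition \ref{p:unbounded} (handling the case when both factors are unbounded via the explicit zig-zag triangle construction) and in Corollary \ref{c:non-hyperbolic} (which chained Lemma \ref{l:P2} with Theorems \ref{t:G2odd}, \ref{t:no-odd} and \ref{t:non-hyperbolic}). Consequently there is no genuine obstacle to overcome here; the proof is a short two-line deduction whose only subtlety is making sure that the symmetry argument, legitimised by commutativity of $\times$, is invoked before fixing which factor plays which role.
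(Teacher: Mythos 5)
Your proof is correct and is essentially identical to the paper's: the authors state Theorem \ref{t:neccon} as an immediate consequence of Proposition \ref{p:unbounded} (ruling out two unbounded factors) and Corollary \ref{c:non-hyperbolic} (ruling out a non-hyperbolic factor), which is exactly the two-step contrapositive deduction you give. The only cosmetic difference is your added remark that the bounded factor is itself hyperbolic, which is true but not needed for the stated conclusion.
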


Theorems \ref{t:G2odd} and \ref{t:no-odd} show that this necessary condition is also sufficient if either $G_2$ has some odd cycle or $G_1$ and $G_2$ do not have odd cycles
(when $G_1$ is a hyperbolic graph and $G_2$ is a bounded graph).
We deal now with the other case, when $G_1$ has some odd cycle and $G_2$ does not have odd cycles.

\begin{theorem} \label{t:P2}
Let $G_1$ be a graph with some odd cycle and $G_2$ a non-trivial bounded graph without odd cycles.
Assume that $G_1$ satisfies the following property:
for each $M>0$ there exist a geodesic $g$ joining two minimal cycles of $G_1$ and a vertex $u\in g \cap V(G_1)$ with
$d_{G_1}\big( u, \mathfrak{C}(G_1) \big) \ge M$.
Then $G_1\times G_2$ is not hyperbolic.
\end{theorem}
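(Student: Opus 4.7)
The plan is to reduce to non-hyperbolicity of $G_1\times P_2$, using Lemma \ref{l:P2} (which yields $\d(G_1\times P_2)\le\d(G_1\times G_2)$); if $G_1$ itself is non-hyperbolic, Theorem \ref{t:non-hyperbolic} finishes the argument, so I assume $G_1$ is hyperbolic, and Lemma \ref{l:oddcycle} bounds every minimal cycle length by $4\d(G_1)$. The strategy is to show that, for each $M>0$, some geodesic triangle in $G_1\times P_2$ fails to be $\sqrt{M}$-thin.

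Fix $M>0$ and let $g=[w_0w_k]$, $C_1\ni w_0$, $C_2\ni w_k$, and $u=w_j$ be as in the hypothesis, so $j,k-j\ge M$. After exchanging $C_1\leftrightarrow C_2$ if necessary, assume $L(C_2)\le L(C_1)$. Write $P_2=[v_1,v_2]$ and set $a:=(w_0,v_1)$, $b:=(w_k,v_\b)$, $c:=(u,v_\g)$, with $v_\b,v_\g\in\{v_1,v_2\}$ forced so that the $P_2$-parity of an $a,b$-walk (resp.\ $a,c$-walk) is opposite to the parity of $k$ (resp.\ $j$). Lemma \ref{l:dc} applied to $g$ and to $g|_{[w_0,u]}$ gives $d(a,b),d(a,c)>\sqrt{M}$. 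Explicit walks, namely $\Gamma_1 g$ followed by the $C_2$-lift at $w_k$, respectively the $C_1$-lift at $w_0$ followed by the appropriate lift of $g|_{[w_0,u]}$, give $d(a,b)\le k+L(C_2)$ and $d(a,c)\le j+L(C_1)$; because $L(C_2)=\min(L(C_1),L(C_2))$ and any interior-of-$g$ detour would cost at least $2M+g_I(G_1)\gg 4\d(G_1)$ (the nearest odd cycle to any interior vertex is at distance at least $M-O(1)$, and Lemma \ref{l:oddcycle} controls the resulting cycle lengths), these walks are actual geodesics. Finally, the lift of $g|_{[u,w_k]}$ is a geodesic $[b,c]$ of length $k-j$, whose parity matches that forced by $v_\b,v_\g$.

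Consider the triangle $T=\{a,b,c\}$ with the three sides being these concrete geodesics. The vertex $p\in[a,b]$ at distance $j$ from $a$ along $\Gamma_1 g$ has the form $(u,v_{\b'})$, and crucially $v_{\b'}\ne v_\g$: the $C_1$-detour on $[a,c]$ flips the $P_2$-coordinate (since $L(C_1)$ is odd) before the lift of $g|_{[w_0,u]}$ reaches $u$, whereas $\Gamma_1 g$ performs no such flip. I claim $d(p,q)>\sqrt{M}$ for every vertex $q=(x,y)\in[a,c]\cup[b,c]$, which gives $\d(T)>\sqrt{M}$. If $x\in C_1$ (on the $C_1$-lift of $[a,c]$), Corollary \ref{c:lower} yields $d(p,q)\ge d_{G_1}(u,x)\ge d_{G_1}(u,\mathfrak{C}(G_1))\ge M$. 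Otherwise $x=w_i$ for some $i\in\{0,\ldots,k\}$, and tracing the alternations of $P_2$-coordinates along $[a,c]$ and $[b,c]$ shows that any walk in $G_1$ realizing $d_{G_1\times P_2}(p,q)$ projects to a $u,w_i$-walk whose length has parity opposite to $d_{G_1}(u,w_i)=|j-i|$; Lemma \ref{l:dc}, applied to the subgeodesic of $g$ joining $u$ and $w_i$ with $u$ itself witnessing the $\sqrt{M}$-bound, yields $d(p,q)>\sqrt{M}$.

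Since $M$ was arbitrary, $\d(G_1\times P_2)=\infty$, and Lemma \ref{l:P2} then gives $\d(G_1\times G_2)=\infty$. The principal obstacle I anticipate is the parity bookkeeping in the construction: the values of $v_\b,v_\g,v_{\b'}$ depend on the parities of $k$ and $j$, and in each parity subcase one must verify both that the chosen walks are genuinely geodesics (using $L(C_2)\le L(C_1)\le 4\d(G_1)$ together with the cost of any interior-of-$g$ detour) and that for each vertex $q$ on $[a,c]\cup[b,c]$ the $P_2$-coordinates do mismatch with $p$'s in the manner required by Lemma \ref{l:dc}. The most delicate subcases are those in which $q$ is the last vertex of $[a,c]$ or $[b,c]$ before $c$, where $|j-i|\le 1$ renders Corollary \ref{c:lower} uninformative and one must rely entirely on the parity-mismatch bound.
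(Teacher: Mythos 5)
Your overall strategy (reduce to $G_1\times P_2$ via Lemma \ref{l:P2}, assume $G_1$ hyperbolic, lift $g$ to $G_1\times P_2$, and use Lemma \ref{l:dc} to show some point is far from the opposite sides) is the right one and matches the paper in spirit, but your construction has a genuine gap: the composite paths you use as the sides $[a,b]$ and $[a,c]$ need not be geodesics. You justify geodesicity of, say, $\Gamma_1 g$ followed by the $C_2$-lift by asserting that ``the nearest odd cycle to any interior vertex is at distance at least $M-O(1)$,'' but the hypothesis only controls $d_{G_1}(u,\mathfrak{C}(G_1))$ for the single vertex $u=w_j$; vertices of $g$ near the endpoints $w_0,w_k$ may lie at distance $O(1)$ from minimal cycles other than $C_1,C_2$. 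Concretely, if a $3$-cycle passes through $w_1$ while $L(C_2)=9$, then there is a $w_0,w_k$-walk of the required parity of length $k+3<k+L(C_2)$, so your path of length $k+L(C_2)$ is not a geodesic, yet the theorem's hypothesis can perfectly well hold in such a graph (it only requires $w_j$ itself to be far from \emph{all} minimal cycles). Once $[a,b]$ is replaced by an actual geodesic, the vertex at distance $j$ from $a$ along it need no longer be of the form $(u,\cdot)$, and the parity/distance analysis that follows collapses. The same objection applies to $[a,c]$.

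The paper sidesteps exactly this difficulty by not using a triangle at all: it forms the geodesic \emph{quadrilateral} $\{\Gamma_1 g,\Gamma_2 g,g_3,g_4\}$, where the two long sides are the two parallel lifts of $g$ (each of length $k=d_{G_1}(w_0,w_k)$, hence automatically geodesics by Corollary \ref{c:lower} — no cycle detour is incorporated into them), and $g_3,g_4$ are arbitrary geodesics joining corresponding endpoints, which are short ($\le 4\d(G_1)$ by Lemma \ref{l:oddcycle}) precisely because $w_0,w_k$ lie on minimal cycles. Then $2\d(G_1\times P_2)$-thinness of the quadrilateral forces $g_1(w_r)$ to be within $2\d(G_1\times P_2)+4\d(G_1)$ of some $g_2(w_{j'})$, while Lemma \ref{l:dc} bounds that distance below by $\sqrt{M}$. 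If you want to rescue your triangle, the cleanest fix is essentially to adopt this quadrilateral instead (or, equivalently, to let $b$ be the terminal vertex of $\Gamma_1 g$ itself and connect the two lifts only at the ends by short unspecified geodesics, never claiming geodesicity for any path that incorporates a cycle traversal).
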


\begin{proof}
If $G_1$ is not hyperbolic, then Corollary \ref{c:non-hyperbolic} gives that $G_1\times G_2$ is not hyperbolic.
Assume now that $G_1$ is hyperbolic.
By Theorem \ref{invarianza} and Lemma \ref{l:P2}, we can assume that $G_2 = P_2$ and $V(P_2)= \{v_1,v_2\}$.

Fix $M>0$ and choose a geodesic $g= [w_{0}w_{k}]= [w_{0},w_{1}] \cup [w_{1},w_{2}] \cup  \cdots  \cup [w_{k-1},w_{k}]$ joining two minimal cycles in $G_1$ and $0 < r < k$ with
$d_{G_1}\big( w_r, \mathfrak{C}(G_1) \big) \ge M$.

Define the paths $g_1$ and $g_2$ in $G_1\times P_2$ as $g_1:= \Gamma_1 g$ and $g_2:= \Gamma_2 g$.
Since $L(g_1)=L(g_2)=L(g) = d_{G_1}(w_0,w_k)$, we have
$$
d_{G_1\times P_2}\big(g_1(w_0),g_1(w_k)\big) \le L(g_1) = d_{G_1}(w_0,w_k) ,
\qquad
d_{G_1\times P_2}\big(g_2(w_0),g_2(w_k)\big) \le L(g_2) = d_{G_1}(w_0,w_k).
$$
Corollary \ref{c:lower} gives that
$$
d_{G_1\times P_2}\big(g_1(w_0),g_1(w_k)\big) \ge d_{G_1}(w_0,w_k),
\qquad
d_{G_1\times P_2}\big(g_2(w_0),g_2(w_k)\big) \ge d_{G_1}(w_0,w_k).
$$
Hence, $g_1$ and $g_2$ are geodesics in $G_1\times P_2$.
Choose geodesics $g_3=[g_1(w_0) g_2(w_0)]$ and $g_4=[g_1(w_k) g_2(w_k)]$ in $G_1\times P_2$.
Since $d_{P_2}(v_1,v_2)=1$ is odd, Proposition \ref{p:distvert1} gives
$$
\begin{aligned}
d_{G_1\times P_2}\big(g_1(w_0),g_2(w_0)\big)
& = \min\big\{L(\s)\,| \; \s \text{ is a } w_0, w_0 \text{-walk }\big\}
\\
& = \min\big\{L(\s)\,| \; \s \text{ cycle of odd length containing } w_0 \big\}.
\end{aligned}
$$
Since $w_0$ belongs to a minimal cycle,
$L(g_3) \le 4\d(G_1)$ by Lemma \ref{l:oddcycle}.
In a similar way, we obtain $L(g_4) \le 4\d(G_1)$.

Consider the geodesic quadrilateral $Q := \{g_1, g_2, g_3, g_4\}$ in $G_1\times P_2$.
Thus
$d_{G_1\times P_2}\big(g_1(w_r),g_2 \cup g_3 \cup g_4\big) \le 2\d(G_1\times P_2)$.
Since $\max \big\{L(g_3), L(g_4)\big\} \le 4\d(G_1)$, we deduce
$d_{G_1\times P_2}\big(g_1(w_r),g_2 \big) \le 2\d(G_1\times P_2) + 4\d(G_1)$.

Let $0 \le j \le k$ with $d_{G_1\times P_2}\big(g_1(w_r),g_2 \big) = d_{G_1\times P_2}\big(g_1(w_r),g_2(w_j)\big)$.
Let us define $w_r':=g_1(w_r)$ and $w_j':=g_2(w_j)$.
Thus Lemma \ref{l:dc} gives
$$
\sqrt{M}
\le \sqrt{d_{G_1}\big( w_r, \mathfrak{C}(G_1) \big)}
\le d_{G_1\times P_2}( w_r', w_j')
= d_{G_1\times P_2}( w_r', g_2 )
\le 2\d(G_1\times P_2) + 4\d(G_1),
$$
and since $M$ is arbitrarily large, we deduce that $G_1\times P_2$ is not hyperbolic.
\end{proof}

\begin{lemma}\label{l:dense} Let $G_1$ be a hyperbolic graph and suppose there is some constant $K>0$ such that for every vertex $w\in G_1$, $d_{G_1}(w, \mathfrak{C}(G_1))\le K$. Then, $G_1\times P_2$ is hyperbolic.
\end{lemma}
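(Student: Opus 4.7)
The plan is to exhibit the projection onto the first factor as a quasi-isometry from $G_1\times P_2$ to $G_1$, after which Theorem \ref{invarianza} immediately transfers hyperbolicity from $G_1$ to $G_1\times P_2$. Writing $V(P_2) = \{v_1,v_2\}$, I would define the map $f: V(G_1\times P_2) \to V(G_1)$ by $f(u,v_j) := u$. This map is surjective (hence $0$-full on vertices), and Corollary \ref{c:lower} immediately gives the lower bound $d_{G_1}(u,u') \le d_{G_1\times P_2}((u,v_j),(u',v_{j'}))$.

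The substantive step is the matching upper bound. By Proposition \ref{p:distvert1}, $d_{G_1\times P_2}((u,v_j),(u',v_{j'}))$ equals the length of the shortest $u,u'$-walk in $G_1$ whose length has the same parity as $|j-j'|$, since $P_2$ admits $v_j,v_{j'}$-walks of every length $\ge |j-j'|$ of the correct parity. If the parities of $d_{G_1}(u,u')$ and $|j-j'|$ agree, a geodesic in $G_1$ already works and the product distance equals $d_{G_1}(u,u')$. Otherwise, the hypothesis supplies a minimal cycle $C\in\mathfrak{C}(G_1)$ with $d_{G_1}(u,C)\le K$; concatenating a geodesic from $u$ to $C$, the cycle $C$, the reverse of that geodesic, and then a geodesic from $u$ to $u'$ produces a $u,u'$-walk of length at most $d_{G_1}(u,u') + 2K + L(C)$, and since $L(C)$ is odd by the definition of $\mathfrak{C}(G_1)$, this new walk has the required parity. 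Lemma \ref{l:oddcycle} bounds $L(C)\le 4\delta(G_1)$, yielding
$$ d_{G_1}(u,u') \le d_{G_1\times P_2}\bigl((u,v_j),(u',v_{j'})\bigr) \le d_{G_1}(u,u') + 2K + 4\delta(G_1). $$
Hence $f$ is a $0$-full $(1,\, 2K+4\delta(G_1))$-quasi-isometric embedding on vertices.

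To conclude, I would invoke Lemma \ref{l:vertices} to extend $f$ to a $(1/2)$-full $(1,\, 2K+4\delta(G_1)+1)$-quasi-isometry $g: G_1\times P_2 \to G_1$, and Theorem \ref{invarianza} then transfers hyperbolicity of $G_1$ to $G_1\times P_2$. The main obstacle is the parity argument in the upper bound: one must simultaneously change the parity of the walk (exploiting that cycles in $\mathfrak{C}(G_1)$ have odd length by definition) and keep the extra length uniformly bounded (combining the hypothesis $d_{G_1}(w,\mathfrak{C}(G_1))\le K$ with the bound $L(C)\le 4\delta(G_1)$ from Lemma \ref{l:oddcycle}). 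Everything else is bookkeeping drawn from the cited results.
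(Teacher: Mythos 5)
Your proposal is correct and follows essentially the same route as the paper: the paper constructs the quasi-isometry in the opposite direction, via the inclusion $w\mapsto (w,v_1)$ of $V(G_1)$ into $V(G_1\times P_2)$, and cites Corollary \ref{c:dc} for exactly the estimate $d_{G_1\times P_2}\le d_{G_1}+2K+4\d(G_1)$ that you re-derive inline from Proposition \ref{p:distvert1}, Lemma \ref{l:oddcycle} and the detour through a nearby minimal cycle. The reversal of direction is immaterial (your version even lets you skip the fullness check, since a quasi-isometric embedding into a hyperbolic space already has hyperbolic domain by Theorem \ref{invarianza}), so the two arguments are the same in substance.
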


\begin{proof} Denote by $v_1$ and $v_2$ the vertices of $P_2$. Let $i: V(G_1)\to V(G_1\times P_2)$ defined as $i(w):=(w,v_1)$ for every $w\in G_1$.

For every pair of vertices $x,y\in V(G_1)$, by Corollary \ref{c:lower}, $d_{G_1}(x,y)\leq d_{G_1\times P_2}(i(x),i(y))$.
By Corollary \ref{c:dc},
$$d_{G_1\times P_2}(i(x),i(y))\leq d_{G_1}(x,y)+2d_{G_1}\big(x,\mathfrak{C}(G_1)\big)+4\delta(G_1)\leq d_{G_1}(x,y)+2K+4\delta(G_1).$$
Therefore, $i: V(G_1)\to V(G_1\times P_2)$ is a $\big(1,2K+4\delta(G_1)\big)$-quasi-isometric embedding.

Notice that for every  $(w,v_1)\in V(G_1\times P_2)$, $(w,v_1)=i(w)$. Also, for any $(w,v_2)\in V(G_1\times P_2)$, since $G_1$ is connected, there is some edge $[w,w']\in E(G_1)$ and we have  $[(w,v_2),(w',v_1)]\in E(G_1\times P_2)$. Therefore, $i:V(G_1) \to V(G_1\times P_2)$ is 1-full.

Thus, by Lemma \ref{l:vertices}, $G_1$ and $G_1\times P_2$ are quasi-isometric and, by Theorem \ref{invarianza}, $G_1\times P_2$ is hyperbolic.
\end{proof}

Theorem \ref{t:G2odd} and Lemmas \ref{l:P2} and \ref{l:dense} have the following consequence.

\begin{theorem}\label{t:dense} Let $G_1$ be a hyperbolic graph and $G_2$ some non-trivial bounded graph.
If there is some constant $K>0$ such that for every vertex $w\in G_1$, $d_{G_1}(w, \mathfrak{C}(G_1))\le K$, then $G_1\times G_2$ is hyperbolic.
\end{theorem}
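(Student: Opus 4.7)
The plan is to split the argument according to whether $G_2$ contains an odd cycle, reducing each case to one of the three preceding results cited in the hint. The hypothesis on $G_1$ is essentially a ``density of odd cycles'' condition, and the strategy is to combine it with the existing dichotomy on $G_2$ (odd cycle vs.\ bipartite) already exploited in Theorems \ref{t:G2odd} and \ref{t:no-odd}.

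First I would handle the case in which $G_2$ has some odd cycle. Here the distance condition on $\mathfrak{C}(G_1)$ is actually irrelevant: since $G_1$ is hyperbolic and $G_2$ is non-trivial and bounded, Theorem \ref{t:G2odd} applies directly and gives that $G_1\times G_2$ is hyperbolic if and only if $G_1$ is hyperbolic, which yields the conclusion at once.

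Next I would treat the case in which $G_2$ has no odd cycle. I would first observe that the hypothesis $d_{G_1}(w,\mathfrak{C}(G_1))\le K$ for every $w\in V(G_1)$ forces $\mathfrak{C}(G_1)\neq\emptyset$ (otherwise this distance is infinite), so in particular $G_1$ has some odd cycle. Now Lemma \ref{l:P2} applies and tells me that $G_1\times G_2$ and $G_1\times P_2$ are quasi-isometric. Lemma \ref{l:dense}, whose hypothesis is exactly the distance condition assumed here together with the hyperbolicity of $G_1$, gives that $G_1\times P_2$ is hyperbolic. Finally, the invariance of hyperbolicity under quasi-isometries (Theorem \ref{invarianza}) transfers hyperbolicity from $G_1\times P_2$ to $G_1\times G_2$.

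There is no real obstacle in this argument: the theorem is stated as a consequence of Theorem \ref{t:G2odd} and Lemmas \ref{l:P2} and \ref{l:dense}, and the proof is merely the assembly of these three ingredients along the bipartite/non-bipartite case split on $G_2$. The only point requiring a brief remark is the implicit content of the hypothesis, namely that $d_{G_1}(\cdot,\mathfrak{C}(G_1))\le K<\infty$ presupposes the existence of minimal (hence odd) cycles in $G_1$, which is precisely what is needed to invoke Lemma \ref{l:P2} in the second case.
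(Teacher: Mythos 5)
Your proposal is correct and is exactly the assembly the paper intends: the paper states Theorem \ref{t:dense} as an immediate consequence of Theorem \ref{t:G2odd} and Lemmas \ref{l:P2} and \ref{l:dense}, and your case split on whether $G_2$ has an odd cycle, together with the observation that the hypothesis forces $\mathfrak{C}(G_1)\neq\emptyset$ so that Lemma \ref{l:P2} applies, fills in the same argument. Nothing is missing.
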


We will finish this section with a characterization of the hyperbolicity of $G_1\times G_2$, under an additional hypothesis.
Since the proof of this result is long and technical, in order to make the arguments more transparent, we
collect some results we need along the proof in technical lemmas.

Let $J$ be a finite or infinite index set.
Now, given a graph $G_1$, we define some graphs related to $G_1$ which will be useful in the following results.
Let $B_j:=B_{G_1}(w_j,K_j)$ with $w_j\in V(G_1)$ and $K_j\in \mathbb{Z}^+$, for any $j\in J$, such that $\sup_j K_j=K<\infty, \, \overline B_{j_1}\cap \overline B_{j_2}=\emptyset$ if $j_1\neq j_2$, and every odd cycle $C$ in $G_1$ satisfies $C \cap B_j\neq \emptyset$ for some $j\in J$.
Denote by $G_1'$ the subgraph of $G_1$ induced by $V(G_1)\setminus(\cup_j B_{j})$.
Let $N_j:=\p B_j=\{w\in V(G_1): d_{G_1}(w,w_j)=K_j\}$.
Denote by $G_1^{*}$ the graph with $V(G_1^{*})=V(G_1')\cup (\cup_j \{w_j^{*}\})$, where $w_j^{*}$ are additional vertices, and $E(G_1^{*})=E(G_1')\cup (\cup_j \{[w,w_j^{*}]: w\in N_j\})$. We have $G_1'=G_1\cap G_1^{*}$.

\begin{lemma}\label{l:1}
Let $G_1$ be a graph as above. Then, there exists a quasi-isometry $g: G_1\to G_1^{*}$ with $g(w_j)=w_j^{*}$ for every $j\in J$.
\end{lemma}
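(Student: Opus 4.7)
The plan is to apply Lemma \ref{l:vertices}: I will define a surjective (hence $0$-full) quasi-isometric embedding $g_0 : V(G_1) \to V(G_1^*)$ and then extend it.

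Set $g_0(w) := w$ for $w\in V(G_1')$ and $g_0(w) := w_j^*$ for $w\in V(B_j)$ (this is unambiguous since the closures $\overline{B_j}$ are pairwise disjoint). In particular $g_0(w_j)=w_j^*$, as required, and every vertex of $G_1^*$ lies in the image of $g_0$, so $g_0$ is $0$-full on vertex sets.

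For the upper bound on $d_{G_1^*}$, take a geodesic $\gamma = v_0 v_1 \cdots v_n$ in $G_1$ from $u$ to $v$ and push it forward by $g_0$. The key observation is that consecutive vertices along $\gamma$ can only fall into one of three configurations: both in $V(G_1')$ (so the edge survives in $G_1'\subset G_1^*$); one in $V(G_1')$ and the other in some $B_j$ (in which case the former satisfies $d_{G_1}(v_i,w_j)=K_j$ and hence lies in $N_j$, so it is connected to $w_j^*$ in $G_1^*$); or both in the same $B_j$ (the image collapses to the single vertex $w_j^*$). A jump between different $B_j$'s is impossible because $\overline{B_{j_1}}\cap\overline{B_{j_2}}=\emptyset$. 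Thus $g_0(\gamma)$ is a walk in $G_1^*$ of length at most $L(\gamma)=d_{G_1}(u,v)$, so $d_{G_1^*}(g_0(u),g_0(v))\le d_{G_1}(u,v)$.

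For the lower bound, take a geodesic $\sigma = y_0 y_1\cdots y_m$ in $G_1^*$ from $g_0(u)$ to $g_0(v)$ and lift it to a walk in $G_1$: replace each $y_i\in V(G_1')$ by itself and each $y_i=w_j^*$ by $w_j$. Any edge of $\sigma$ becomes a segment in $G_1$ of length at most $K$ (either a single edge of $G_1'$, or a geodesic of length $K_j\le K$ from some $n\in N_j$ to $w_j$). Prepending a path from $u$ to $\tilde y_0$ (of length $<K$ if $u\in V(B_j)$, zero otherwise) and appending one from $\tilde y_m$ to $v$, we obtain a walk in $G_1$ of length at most $K\,d_{G_1^*}(g_0(u),g_0(v))+2K$. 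Hence $d_{G_1}(u,v)\le K\,d_{G_1^*}(g_0(u),g_0(v))+2K$, which combined with the previous inequality shows that $g_0$ is a $(K,2K)$-quasi-isometric embedding. Lemma \ref{l:vertices} then produces the desired quasi-isometry $g : G_1 \to G_1^*$ with $g(w_j)=w_j^*$.

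The main obstacle I expect is the structural bookkeeping in the two directions above: one must rule out ``forbidden'' adjacencies (edges skipping between distinct balls, or vertices of $V(G_1')$ not in $N_j$ being adjacent to vertices of $B_j$) so that the push-forward is actually a walk in $G_1^*$ and the lift is actually a walk in $G_1$. Once these are nailed down, the quantitative estimates are immediate from $\sup_j K_j = K$.
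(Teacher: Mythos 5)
Your proposal is correct and follows essentially the same route as the paper: the same vertex map (collapsing each $B_j$ to $w_j^{*}$ and fixing $V(G_1')$), shown to be a $0$-full $(K,2K)$-quasi-isometric embedding and then extended via Lemma \ref{l:vertices}. The only cosmetic difference is in the lower bound, where you lift a $G_1^{*}$-geodesic edge by edge to a walk in $G_1$ of length at most $K$ per edge, whereas the paper decomposes the geodesic according to its passages through the vertices $w_j^{*}$; the two bookkeepings yield the same estimate.
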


\begin{proof}
Let $f: V(G_1)\to V(G_1^{*})$ defined as $f(u)=u$ for every $u\in V(G_1')$, and $f(u)=w_i^{*}$ for every $u\in V(B_i)$.
It is clear  that $f: V(G_1)\to V(G_1^{*})$  is 0-full.

Now, we focus on proving that $f: V(G_1)\to V(G_1^{*})$ is a $(K,2K)$-quasi-isometric embedding.
For every $u,v\in V(G_1)$, it is clear that $d_{G_1^{*}}(f(u),f(v))\le d_{G_1}(u,v)$.

In order to prove the other inequality, let us fix $u,v\in V(G_1)$ and let us consider a geodesic $\g$ in $G_1^{*}$ joining $f(u)$ and $f(v)$.

Assume that $u,v\in V(G_1')$.
If $L(\g)= d_{G_1}(u,v)$, then $d_{G_1}(u,v)=d_{G_1^{*}}(f(u),f(v))$. If $L(\g)< d_{G_1}(u,v)$, then $\g$ meets some $w_j^{*}$.
Since $\g$ is a compact set, it intersects just a finite number of $w_j^{*}$'s, which we denote by $w_{j_1}^{*}, \ldots w_{j_r}^{*}$.
We consider $\g$ as an oriented curve from $f(u)$ to $f(v)$; thus we can assume that $\g$ meets $w_{j_1}^{*}, \ldots w_{j_r}^{*}$ in this order.

Let us define the following vertices in $\g$
$$w_i^1=[f(u)w_{j_i}^{*}]\cap N_{j_i},\quad \quad w_i^2=[w_{j_i}^{*}f(v)]\cap N_{j_i},$$
for every $1\le i\le r$.
Note that $[w_i^2w_{i+1}^1]\subset G_1'$ for every $1\le i< r$ (it is possible to have $w_i^2=w_{i+1}^1$).

Since $d_{G_1^{*}}(w_i^1,w_i^2)=2$ and $d_{G_1}(w_i^1,w_i^2)\le 2K$, we have $d_{G_1^{*}}(w_i^1,w_i^2) \ge \frac{1}{K} \, d_{G_1}(w_i^1,w_i^2)$ for every $1\le i\le r$.
Thus,
$$
\begin{aligned}
d_{G_1^{*}}(f(u),f(v)) & = d_{G_1^{*}}(f(u),w_1^1) + \sum_{i=1}^r d_{G_1^{*}}(w_i^1,w_i^2) + \sum_{i=1}^{r-1} d_{G_1^{*}}(w_i^2,w_{i+1}^1) + d_{G_1^{*}}(w_r^2, f(v))
\\
& \ge d_{G_1}(u,w_1^1) + \frac{1}{K} \sum_{i=1}^r d_{G_1}(w_i^1,w_i^2) + \sum_{i=1}^{r-1} d_{G_1}(w_i^2,w_{i+1}^1) + d_{G_1}(w_r^2, v)
\\
& \ge \frac{1}{K}\, \Big(d_{G_1}(u,w_1^1) +\sum_{i=1}^r d_{G_1}(w_i^1,w_i^2) + \sum_{i=1}^{r-1} d_{G_1}(w_i^2,w_{i+1}^1)+d_{G_1}(w_r^2, v)\Big)
\\
& \ge \frac{1}{K} \,d_{G_1}(u,v).
\end{aligned}
$$

Assume that $f(u)=f(v)$.
Therefore, there exists $j$ with $u,v\in B_j$ and
$$d_{G_1^{*}}(f(u),f(v))=0 > d_{G_1}(u,v)-2K.$$

Assume now that $u$ and/or $v$ does not belong to $V(G_1')$ and $f(u)\neq f(v)$.
Let $u_0,v_0$ be the closest vertices in $V(G_1')\cap \g$ to $f(u), f(v)$, respectively (it is possible to have $u_0=f(u)$ or $v_0=f(v)$).
Since $u_0,v_0\in V(G_1')$, $u_0=f(u_0), v_0=f(v_0)$, we have $d_{G_1}(u,u_0)< 2K$ and $d_{G_1}(v,v_0)< 2K$. Hence,
$$
\begin{aligned}
d_{G_1^{*}}(f(u),f(v))
& = d_{G_1^{*}}(f(u),u_0)+d_{G_1^{*}}(u_0,v_0)+d_{G_1^{*}}(v_0,f(v))
\\
& \ge d_{G_1^{*}}(f(u_0),f(v_0))
\\
& \ge \frac{1}{K}\, d_{G_1}(u_0,v_0)
\\
& \ge \frac{1}{K}\, \Big(d_{G_1}(u,v)- d_{G_1}(u,u_0)-d_{G_1}(v,v_0) \Big)
\\
& > \frac{1}{K}\, d_{G_1}(u,v)-4.
\end{aligned}
$$

If $K\ge 2$, then $d_{G_1^{*}}(f(u),f(v))>\frac{1}{K}\, d_{G_1}(u,v)-2K$.
If $K=1$, then $d_{G_1}(u,u_0)\le 1, d_{G_1}(v,v_0)\le 1$, and $d_{G_1^{*}}(f(u),f(v))\ge d_{G_1}(u,v)- 2$.

Finally, we conclude that $f: V(G_1)\to V(G_1^{*})$ is a $(K,2K)$-quasi-isometric embedding. Thus, Lemma \ref{l:vertices} provides a quasi-isometry $g: G_1\to G_1^{*}$ with the required property.
\end{proof}

\begin{definition} Given a graph $G_1$ and some index set $J$ let $\mathcal{B}_J=\{B_j\}_{j\in J}$
be a family of balls where $B_j:=B_{G_1}(w_j,K_j)$ with $w_j\in V(G_1)$, $K_j\in \mathbb{Z}^+$ for
any $j\in J$, $\sup_j K_j=K<\infty$ and $\overline B_{j_1}\cap \overline B_{j_2}=\emptyset$
if $j_1\neq j_2$. Suppose that every odd cycle $C$ in $G_1$ satisfies that $C \cap B_j\neq \emptyset$
for some $j\in J$.
If there is some constant $M>0$ such that for every $j\in J$, there is an odd cycle $C_j$ such that $C_j \cap B_j \neq \emptyset$ with $L(C_j) <M$, then
we say that $\mathcal{B}_J$ is $M$-\emph{regular}.
\end{definition}

\begin{remark}\label{r:M-regular}
If $J$ is finite, then there exists $M>0$ such that $\{B_j\}_{j\in J}$ is $M$-regular.
\end{remark}

Denote by $G^{*}$ the graph with $V(G^{*})=V(G_1'\times P_2)\cup (\cup_j \{w_j^{*}\})$, where $G_1'$ is a graph as above and $w_j^{*}$ are additional vertices, and $E(G^{*})= E(G_1'\times P_2)\cup (\cup_j \{[w,w_j^{*}]: \pi_1(w)\in N_j\})$.

\begin{lemma}\label{l:2}
Let $G_1$ be a graph as above and $P_2$ with $V(P_2)=\{v_1,v_2\}$.
If $G_1$ is hyperbolic and $\mathcal{B}_J$ as above is $M$-regular,
then there exists a quasi-isometry $f: G_1\times P_2\to G^{*}$ with $f(w_j,v_i)=w_j^{*}$ for every $j\in J$ and $i\in \{1,2\}$.
\end{lemma}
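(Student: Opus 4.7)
My plan is to construct $f$ by direct analogy with Lemma \ref{l:1}, collapsing each ``ball times $P_2$'' to a single vertex. Define
$$
f(u,v_i):=\begin{cases}(u,v_i) & \text{if } u\in V(G_1'),\\ w_j^{*} & \text{if } u\in B_j.\end{cases}
$$
In particular $f(w_j,v_i)=w_j^{*}$, and since every vertex of $G^{*}$ is either of the form $(u,v_i)\in V(G_1'\times P_2)$ or some $w_j^{*}=f(w_j,v_1)$, the map $f$ is $0$-full on vertex sets. By Lemma \ref{l:vertices} it then suffices to verify that $f$ is an $(\alpha,\beta)$-quasi-isometric embedding on vertex sets with constants depending only on $K$ and $M$.

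For the upper bound $d_{G^{*}}(f(x),f(y))\le d_{G_1\times P_2}(x,y)$ I would check edge by edge that $d_{G^{*}}(f(\widetilde x),f(\widetilde y))\le 1$ for every edge $[\widetilde x,\widetilde y]\in E(G_1\times P_2)$. The disjointness $\overline B_{j_1}\cap\overline B_{j_2}=\emptyset$ for $j_1\ne j_2$ forbids an edge with endpoints in two distinct balls (such an edge would force $d_{G_1}(w_{j_1},w_{j_2})<K_{j_1}+K_{j_2}$). Hence every edge of $G_1\times P_2$ either lies in $G_1'\times P_2\subset G^{*}$, or has exactly one endpoint $(a,v_p)$ with $a\in N_j$ and the other in $B_j\times V(P_2)$ (its $f$-image is then the $G^{*}$-edge $[(a,v_p),w_j^{*}]$), or has both endpoints in $B_j\times V(P_2)$ (and therefore collapses).

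For the lower bound I would lift a $G^{*}$-geodesic $\gamma$ of length $L$ from $f(x)$ to $f(y)$ to a walk in $G_1\times P_2$ from $x$ to $y$. Edges of $\gamma$ in $E(G_1'\times P_2)$ lift to themselves. Each visit to an interior vertex $w_j^{*}$ corresponds to two consecutive $\gamma$-edges entering at $(a,v_p)$ and leaving at $(b,v_q)$ with $a,b\in N_j$; I replace this pair by a walk in $G_1\times P_2$ from $(a,v_p)$ to $(b,v_q)$ of length at most $6K+M$, built as follows. Take a geodesic from $a$ to $b$ in $G_1$, which has length at most $2K$ since $a,b\in N_j$; if its parity disagrees with $|p-q|$, concatenate a detour from $b$ to a vertex $c\in C_j\cap B_j$ (at distance at most $2K$), around the odd cycle $C_j$ (of length $<M$), and back to $b$. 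By Proposition \ref{p:distvert1} this realizes an $(a,v_p),(b,v_q)$-walk of the required parity in $G_1\times P_2$. Endpoint cases in which $f(x)$ or $f(y)$ equals some $w_j^{*}$ are handled by the same construction applied to the single boundary edge of $\gamma$, again at cost at most $6K+M$. Summing these contributions gives a lifted walk of length at most $L+(6K+M)(L/2+2)$, which is the desired linear upper bound on $d_{G_1\times P_2}(x,y)$.

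The crux of the argument is the parity-matching step, which is precisely where the $M$-regularity of $\mathcal{B}_J$ is essential: it supplies, uniformly in $j\in J$, a short odd cycle meeting $B_j$ that permits a bounded-cost flip between the two parities of $a,b$-walks in $G_1$. Once this is in place, Lemma \ref{l:vertices} upgrades the quasi-isometric embedding on vertex sets to a quasi-isometry $f\colon G_1\times P_2\to G^{*}$, and the normalization $f(w_j,v_i)=w_j^{*}$ is built into the definition of $f$.
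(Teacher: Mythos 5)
Your proposal is correct and follows essentially the same route as the paper: the same collapsing map sending $(u,v_i)$ to $w_j^{*}$ for $u\in B_j$ and fixing $G_1'\times P_2$, the same $0$-fullness observation, and the same use of $M$-regularity to supply a short odd cycle near each $B_j$ that repairs the parity mismatch when a $G^{*}$-geodesic crosses a collapsed ball, before invoking Lemma \ref{l:vertices}. The only differences are bookkeeping: you lift the $G^{*}$-geodesic to a walk in $G_1\times P_2$ and charge $6K+M$ per ball visit, whereas the paper bounds the entry/exit distances $d_{G_1\times P_2}(w_i^1,w_i^2)$ by $4K+M$ and telescopes; nothing essential changes.
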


\begin{proof}
Let $F: V(G_1\times P_2)\to V(G^{*})$ defined as $F(v,v_i)=(v,v_i)$ for every $v\in V(G_1')$, and $F(v,v_i)=w_j^{*}$ for every $v\in V(B_j)$.
It is clear that $F: V(G_1\times P_2)\to V(G^{*})$ is 0-full.
Recall that we denote by $\pi_1: G_1\times P_2 \to G_1$ the projection map.
Define $\pi^{*}: G^{*}\to G_1$ as $\pi^{*}=\pi_1$ on $G_1'\times P_2$ and
$\pi^{*}(x)=w_j$ for every $x$ with $d_{G^*}(x,w_j^*)<1$ for some $j\in J$.

Now, we focus on proving that $F: V(G_1\times P_2)\to V(G^{*})$ is a quasi-isometric embedding.
For every $(w,v_i), (w',v_{i'})\in V(G_1\times P_2)$, one can check
$$d_{G^{*}}(F(w,v_i), F(w',v_{i'}))\le d_{G_1\times P_2}((w,v_i), (w',v_{i'})).$$
In order to prove the other inequality, let us fix $(w,v_i), (w',v_{i'})\in V(G_1'\times P_2)$ (the inequalities in other cases can be obtained from the one in this case, as in the proof of Lemma \ref{l:1}).
Consider a geodesic $\g:=[F(w,v_i) F(w',v_{i'})]$ in $G^{*}$.
If $L(\g)= d_{G_1\times P_2}((w,v_i),(w',v_{i'}))$, then
$$d_{G^{*}}(F(w,v_i), F(w',v_{i'}))= d_{G_1\times P_2}((w,v_i), (w',v_{i'})).$$
If $L(\g) < d_{G_1\times P_2}((w,v_i),(w',v_{i'}))$, then $\pi^{*}(\g)$ meets some $B_j$.
Since $\g$ is a compact set, $\pi^{*}(\g)$ intersects just a finite number of $B_j$'s, which we denote by $B_{j_1}, \ldots B_{j_r}$.
We consider $\g$ as an oriented curve from $F(w,v_i)$ to $F(w',v_{i'})$; thus we can assume that $\pi^{*}(\g)$ meets $B_{j_1}, \ldots B_{j_r}$ in this order.

Let us define the following set of vertices in $\g$
$$\{w_i^1, w_i^2\}:= \g\cap (N_{j_i}\times P_2),$$
for every $1\le i\le r$, such that $d_{G_1\times P_2}((w,v_i),w_i^1)< d_{G_1\times P_2}((w,v_i),w_i^2)$.
Note that $[w_i^2w_{i+1}^1]\subset G_1'\times P_2$ for every $1\le i< r$ and $d_{G_1\times P_2}(w_i^2,w_{i+1}^1)\ge 1$ since $\overline B_{j_i}\cap \overline B_{j_{i+1}}=\emptyset$.

If $d_{G_1}(\pi(w_i^1),\pi(w_i^2))=d_{G_1\times P_2}(w_i^1,w_i^2)$  for some $1\le i\le r$, then $d_{G_1\times P_2}(w_i^1,w_i^2)\le 2K$.
Since \\
$d_{G_1\times P_2}(w_i^2,w_{i+1}^1)\ge 1$  for $1\le i< r$,
we have that $d_{G_1\times P_2}(w_i^1,w_i^2)\le 2K\, d_{G_1\times P_2}(w_i^2,w_{i+1}^1)$ in this case.

If $d_{G_1}(\pi_1(w_i^1),\pi_1(w_i^2)) < d_{G_1\times P_2}(w_i^1,w_i^2)$ for some $1\le i\le r$, then $d_{G_1}(\pi_1(w_i^1),\pi_1(w_i^2)) + d_{G_1\times P_2}(w_i^1,w_i^2)$ is odd.

Since $\mathcal{B}_J$ is $M$-regular, consider an odd cycle $C$ with $C\cap B_{j_i}\neq \emptyset$ and $L(C)<M$, and let  	 			
$b_i\in C\cap B_{j_i}$ and $[\pi_1(w_i^1)b_i], [b_i\pi_1(w_i^2)]$ geodesics in $G_1$.				
Thus, $[\pi_1(w_i^1)b_i]\cup [b_i\pi_1(w_i^2)]$ and $[\pi_1(w_i^1)b_i]\cup C \cup [b_i\pi_1(w_i^2)]$
have different parity which means that one of them has different parity from $[\pi_1(w_i^1)\pi_1(w_i^2)]$.
Then, $d_{G_1\times P_2}(w_i^1,w_i^2)\le L([\pi_1(w_i^1)b_i]\cup C \cup [b_i\pi_1(w_i^2)]) \le 4K+ M$.
Since $d_{G_1\times P_2}(w_i^2,w_{i+1}^1)\ge 1$ for $1\le i< r$, we have that
$d_{G_1\times P_2}(w_i^1,w_i^2)\le \Big(4K+M\Big)\,d_{G_1\times P_2}(w_i^2,w_{i+1}^1)$ in this case.

Thus, we have that $d_{G_1\times P_2}(w_i^1,w_i^2)\le 4K+M$ for every $1\le i\le r$
and $d_{G_1\times P_2}(w_i^1,w_i^2)\le \Big(4K+M\Big)\,d_{G_1\times P_2}(w_i^2,w_{i+1}^1)$ for every $1\le i< r$.
Therefore,
\begin{eqnarray*}
\hspace{-0.5cm}&&d_{G_1\times P_2}((w,v_i),(w',v_{i'}))\le d_{G_1\times P_2}((w,v_i), w_1^1)+\sum_{i=1}^r d_{G_1\times P_2}(w_i^1, w_i^2)+\sum_{i=1}^{r-1} d_{G_1\times P_2}(w_i^2, w_{i+1}^1)\\
\hspace{-0.5cm}&&\hspace{4cm}+ \,d_{G_1\times P_2}(w_r^2,(w',v_{i'}))\\
\hspace{-0.5cm}&&\le  d_{G_1\times P_2}((w,v_i),w_1^1)+d_{G_1\times P_2}(w_r^2,(w',v_{i'}))+\Big(4K+M+1\Big)\sum_{i=1}^{r-1} d_{G_1\times P_2}(w_i^2,w_{i+1}^1)\\
\hspace{-0.5cm}&& \hspace{4cm} + \, d_{G_1\times P_2}(w_r^1,w_r^2)\\
\hspace{-0.5cm}&&=  d_{G^{*}}(F(w,v_i),F(w_1^1))+d_{G^{*}}(F(w_r^2),F(w',v_{i'}))+\Big(4K+M+1\Big)\sum_{i=1}^{r-1} d_{G^{*}}(F(w_i^2),F(w_{i+1}^1)) + d_{G_1\times P_2}(w_r^1,w_r^2)\\
\hspace{-0.5cm}&&\le \Big(4K+M+1\Big)\Big( d_{G^{*}}(F(w,v_i),F(w_1^1))+d_{G^{*}}(F(w_r^2),F(w',v_{i'}))+\sum_{i=1}^{r-1} d_{G^{*}}(F(w_i^2),F(w_{i+1}^1))\Big) + 4K+M\\
\hspace{-0.5cm}&&\le \Big(4K+M+1\Big)\, d_{G^{*}}(F(w,v_i),F(w',v_{i'}))+4K+M.
\end{eqnarray*}
We conclude that $F: V(G_1\times P_2)\to V(G^{*})$ is a quasi-isometric embedding.
Thus, Lemma \ref{l:vertices} provides a quasi-isometry $f: G_1\times P_2\to G^{*}$ with the required property.
\end{proof}

\begin{definition}
\label{def:3.1}
Given a geodesic metric space $X$ and closed connected pairwise disjoint
subsets $\{\eta_j\}_{j\in J}$ of $X$, we consider another copy
$X'$ of $X$. The \emph{double} $DX$ of $X$ is the union of $X$ and
$X'$ obtained by identifying the corresponding points in each
$\eta_j$ and $\eta_j'$.
\end{definition}

\begin{definition} \label{def:Hausdorff}
Let us consider $H>0$, a metric space $X$, and subsets
$Y,Z\subseteq X$. The set $V_H(Y):=\{x\in X:\,d(x,Y)\le H\}$ is
called the $H$-\emph{neighborhood} of $Y$ in $X$. The
\emph{Hausdorff distance} of $Y$ to $Z$ is defined by
$\mathcal{H}(Y,Z):=\inf\{H>0:\,Y\subseteq V_H(Z),\,\,Z\subseteq
V_H(Y)\}$.
\end{definition}

The following results in \cite{APRT}  and \cite{GH} will be useful.

\begin{theorem}\cite[Theorem 3.2]{APRT}
\label{th:3.1}
Let us consider a geodesic metric space $X$ and closed connected pairwise
disjoint subsets $\{\eta_j\}_{j\in J}$ of $X$, such that the
double $DX$ is a geodesic metric space. Then the following
conditions are equivalent:

\begin{enumerate}
\item[$(1)$] $DX$ is hyperbolic.

\item[$(2)$] $X$ is hyperbolic and there exists a constant $c_1$
such that for every $k,l \in J$ and $a\in \eta_k, b\in \eta_l$ we
have $d_X(x,\cup_{j\in J}\eta_j) \le c_1$ for every $x\in
[ab]\subset X$.

\item[$(3)$] $X$ is hyperbolic and there exist constants
$c_2,\a,\b$ such that for every $k,l \in J$ and $a\in \eta_k,
b\in \eta_l$ we have $d_X(x,\cup_{j\in J}\eta_j) \le c_2$ for
every $x$ in some $(\a,\b)$-quasi-geodesic joining $a$ with $b$ in
$X$.
\end{enumerate}
\end{theorem}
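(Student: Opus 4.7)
The plan is to establish $(1)\Rightarrow(2)$ directly, derive $(2)\Leftrightarrow(3)$ from the stability of quasi-geodesics in $X$, and then devote the bulk of the work to $(2)\Rightarrow(1)$. First I would record two structural features of $DX$: the involution $\iota\colon DX\to DX$ that swaps the two copies while fixing each $\eta_j$ pointwise is an isometry, so $X$ and $X'$ embed isometrically in $DX$, and every $DX$-geodesic $\g$ decomposes as an alternating concatenation $\g=\a_0\cup\b_0\cup\a_1\cup\b_1\cup\cdots$ with $\a_i\subset X$ and $\b_i\subset X'$ (or vice versa), transitions occurring only at points of $\bigcup_j\eta_j$. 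In particular, each interior arc of such a decomposition is a geodesic of $X$ or $X'$ joining two points of $\bigcup_j\eta_j$.

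For $(1)\Rightarrow(2)$ the isometric embedding yields $\d(X)\le\d(DX)$. Given $a\in\eta_k, b\in\eta_l$ and a geodesic $[ab]\subset X$, its $\iota$-image $[ab]'\subset X'$ is another $DX$-geodesic between the same endpoints (as $a,b$ are $\iota$-fixed), so $\{[ab],[ab]'\}$ is a geodesic bigon in $DX$. Its $\d(DX)$-thinness forces every $x\in[ab]$ to satisfy $d_{DX}(x,[ab]')\le\d(DX)$; since any $DX$-path from $X$ to $X'$ must cross $\bigcup_j\eta_j$, some $z\in\bigcup_j\eta_j$ has $d_{DX}(x,z)\le\d(DX)$, and as $x,z\in X$ this equals $d_X(x,z)$, giving $c_1:=\d(DX)$. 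The equivalence $(2)\Leftrightarrow(3)$ then follows from the fact that in the hyperbolic space $X$, every $(\a,\b)$-quasi-geodesic sharing endpoints with a geodesic lies at Hausdorff distance at most $H=H(\d(X),\a,\b)$ from it, so a uniform $c_2$-bound on distance to $\bigcup_j\eta_j$ translates between quasi-geodesics and geodesics with an additive loss.

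For $(2)\Rightarrow(1)$ the strategy is to compare a geodesic triangle $T=\{p,q,r\}$ in $DX$ with its $\iota$-folded image in $X$. Decomposing each side of $T$ as above, every interior $X'$-arc $\b_i$ has endpoints in $\bigcup_j\eta_j$ and, by applying $(2)$ inside the isometric copy $X'$, lies in the $c_1$-neighborhood of $\bigcup_j\eta_j$; hence every point of such an arc is at $DX$-distance at most $2c_1$ from its $\iota$-image in $X$ (route through a nearby $\eta_j$). After folding every $X'$-arc onto $X$ one obtains a closed curve in $X$ whose pieces are genuine $X$-geodesics between consecutive crossings of $\bigcup_j\eta_j$; using $\d(X)$-thinness on a sub-triangulation of this folded curve, combined with the $2c_1$-per-fold comparison, should bound the distance from any point of $T$ to the union of the other two sides by a constant depending only on $\d(X)$ and $c_1$.

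The main obstacle is precisely this transfer step: the fold $F\colon DX\to X$ defined to be the identity on $X$ and $\iota$ on $X'$ is $1$-Lipschitz but not a quasi-isometry in general (a $DX$-geodesic from $p\in X$ to $q\in X'$ with $F(q)$ near $p$ in $X$ can nevertheless be long in $DX$), so one cannot simply transport $X$-thinness back to $DX$. The honest argument must instead track, for a given point $x$ on one side of $T$, how the adjacent side arcs align with $\bigcup_j\eta_j$ and apply $(2)$ repeatedly to produce a nearby witness on another side by detouring through an $\eta_j$ close to both arcs. Carrying out this case analysis (interior versus terminal arcs, and every combination of sheets for $x$ and its witness) is where the work concentrates, but the outcome is a uniform bound $\d(DX)\le C(\d(X),c_1)$, closing the cycle of implications.
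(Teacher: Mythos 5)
A point of order first: the paper does not prove this statement at all --- it is quoted verbatim from \cite[Theorem 3.2]{APRT} and used as a black box in the proof of Theorem \ref{t:char} --- so there is no internal proof to measure yours against. Judged on its own terms, your treatment of $(1)\Rightarrow(2)$ is correct and complete: $X$ sits isometrically in $DX$ (any excursion of a path into $X'$ between two crossings $z_1,z_2$ of $\bigcup_j\eta_j$ can be replaced by an $X$-path of no greater length, since $X'$ is an isometric copy of $X$), the bigon $\{[ab],\iota([ab])\}$ is $\delta(DX)$-thin, and any path from $x\in X$ to a point of $X'\setminus\bigcup_j\eta_j$ must cross $\bigcup_j\eta_j$, which yields $c_1=\delta(DX)$. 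The equivalence $(2)\Leftrightarrow(3)$ via geodesic stability (Theorem \ref{t:Geodesic-Stability}) is also fine.

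The gap is in $(2)\Rightarrow(1)$, which is the substance of the theorem. Your folded curve is a geodesic polygon in $X$ whose number of sides equals the number of alternations of the sides of $T$ between $X$ and $X'$, and nothing in the hypotheses bounds that number (a priori it need not even be finite: the preimage under a side of $T$ of the open set $DX\setminus\bigcup_j\eta_j$ is only a countable union of intervals). Thinness of a geodesic $n$-gon in a $\delta$-hyperbolic space degrades with $n$, so ``$\delta(X)$-thinness on a sub-triangulation of this folded curve'' cannot by itself produce a constant depending only on $\delta(X)$ and $c_1$; and, as you yourself note, the fold $F\colon DX\to X$ is $1$-Lipschitz but not a quasi-isometry, so thinness cannot be transported back to $DX$ even if the folded polygon were controlled. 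The paragraph meant to overcome this ends by asserting ``the outcome is a uniform bound $\delta(DX)\le C(\delta(X),c_1)$'' without an argument, which is exactly the statement to be proved. In the source, this implication is not obtained by folding triangles but by the decomposition machinery of \cite{RT1} (hyperbolicity of a space glued from hyperbolic pieces along a family of subsets, under a uniform bound on how geodesics between the gluing sets approach them); that machinery is what converts the local condition $(2)$ into a global thinness bound for $DX$, and it is absent from the proposal. In short: the easy implications are proved, the hard one is restated.
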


\begin{theorem}
\label{t:Geodesic-Stability}
\cite[p.87]{GH}
For each $\d\ge 0$, $a\ge 1$ and $b\ge 0$,
there exists a constant $H=H(\d,a,b)$ with the following property:

Let us consider a $\d$-hyperbolic geodesic metric space $X$ and
an $(a,b)$-quasigeodesic $g$ starting in $x$ and finishing in $y$.
If $\g$ is a geodesic joining $x$ and $y$, then
$\mathcal{H}(g,\g)\le H$.
\end{theorem}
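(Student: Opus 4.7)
The plan is to follow the classical Morse lemma argument. Let $g:[s,t]\to X$ be the given $(a,b)$-quasi-geodesic and let $\g$ be a geodesic with the same endpoints $x,y$. First I would reduce to the continuous case by sampling $g$ at the integers of $[s,t]$ and joining consecutive samples by geodesic segments of $X$; the resulting path $\tilde g$ has the same endpoints, is at Hausdorff distance $O(a+b)$ from the image of $g$, and is itself an $(a',b')$-quasi-geodesic with $a',b'$ depending only on $a,b$. It therefore suffices to produce a bound $\mathcal{H}(\tilde g,\g)\le H$ with $H=H(\d,a,b)$.

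The core step is the inclusion $\g\subset V_H(\tilde g)$, which I would prove by an ``exponential versus linear'' contradiction. Set $D:=\sup_{p\in\g}d_X(p,\tilde g)$ and pick $p_0\in\g$ essentially attaining this supremum. Walking along $\g$ in both directions from $p_0$ until $d_X(\cdot,\tilde g)$ first drops to $D/2$ yields a subarc $\g_0\subset\g$ with endpoints $u,v$; choose $u',v'\in\tilde g$ with $d_X(u,u'),d_X(v,v')\le D/2$. Applying $\d$-thinness twice to the geodesic quadrilateral with sides $\g_0$, $[uu']$, $[u'v']$ and $[v'v]$ places $p_0$ within $2\d$ of one of the four sides; since $p_0$ is at distance at least $D/2-O(\d)$ from the two ``vertical'' sides by construction of $\g_0$, it must be $2\d$-close to $[u'v']$, and hence the subarc $g_{u'v'}\subset\tilde g$ between $u'$ and $v'$ passes within $O(\d)$ of $p_0$.

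The contradiction arises by estimating $L(g_{u'v'})$ in two competing ways. The quasi-geodesic inequality gives $L(g_{u'v'})\ge (a')^{-1}d_X(u',v')-b'$, while iterating the halving construction $k$ times, each step applying $\d$-thinness to a new quadrilateral sitting inside the previous one, yields an exponential lower bound of the form $d_X(u',v')\ge c\cdot 2^{D/K\d}$ for numerical constants $c,K$. Combined with the quasi-geodesic inequality and the upper bound $L(g_{u'v'})\le 2D+2\d$ coming from the quadrilateral, these estimates force $D$ to be bounded by some explicit $H=H(\d,a,b)$, closing the inclusion $\g\subset V_H(\tilde g)$. The reverse inclusion $\tilde g\subset V_{H'}(\g)$ is then short: any maximal excursion of $\tilde g$ outside $V_H(\g)$ has both endpoints inside $V_H(\g)$ at $X$-distance at most $2H$, so by the quasi-geodesic bound its length is at most $a(2H)+b$, giving $H':=H+2aH+b$ as the desired constant. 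The main obstacle is precisely the exponential step in the previous paragraph: turning the iterated halving into a quantitative bound with explicit dependence on $\d,a,b$ requires careful bookkeeping of the constants at each iteration, which is the bulk of the classical proof in GH and the reason this paper quotes the result rather than reproving it.
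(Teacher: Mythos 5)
The paper offers no proof of this statement: it is quoted from Ghys--de la Harpe \cite[p.87]{GH} as a known classical result (the Morse lemma / geodesic stability), so there is nothing internal to compare against. Your high-level strategy --- tame the quasi-geodesic, prove $\g\subset V_H(\tilde g)$ by an exponential-versus-linear contradiction, then get the reverse inclusion cheaply --- is indeed the standard argument, but the core step as you have written it contains several concrete gaps.

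First, choosing $u,v$ as the first points along $\g$ where $d_X(\cdot,\tilde g)$ drops to $D/2$ gives no control on $d_X(u,v)$: this function is only $1$-Lipschitz along $\g$, so the subarc $\g_0$ can be arbitrarily long compared with $D$ (imagine $\tilde g$ running ``parallel'' to $\g$ at distance $D$ over a long stretch), and then every later bound that needs to be linear in $D$ fails; the standard fix is to take $u,v$ at arc-length exactly $2D$ from $p_0$ along $\g$ (or the endpoints if nearer), so that $d_X(u,v)\le 4D$ and the connecting segments $[uu'],[v'v]$ stay outside $B_X(p_0,D)$. Second, the two competing estimates are both wrong in form. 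The iterated halving (bisection) lemma gives an exponential lower bound on the \emph{length} of any rectifiable path joining $u$ to $v$ that avoids the ball $B_X(p_0,R)$, namely $L\ge 2^{(R-1)/\d}$; it gives no lower bound on the distance $d_X(u',v')$, which can be small or even zero. Dually, the linear upper bound must come from the taming property you set up in your first paragraph but never invoke --- for the tamed path, $L(\tilde g_{u'v'})\le k_1\, d_X(u',v')+k_2$ with $k_1,k_2$ depending only on $a,b$, which combined with $d_X(u',v')\le 5D$ closes the contradiction; your claimed bound $L(g_{u'v'})\le 2D+2\d$ does not follow from the quadrilateral, and the inequality $L(g_{u'v'})\ge (a')^{-1}d_X(u',v')-b'$ is weaker than the trivial $L\ge d_X(u',v')$ and does no work. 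Finally, the inference ``$p_0$ is $2\d$-close to the geodesic $[u'v']$, hence the subarc of $\tilde g$ between $u'$ and $v'$ passes within $O(\d)$ of $p_0$'' is circular: closeness of a quasi-geodesic arc to the geodesic with the same endpoints is precisely the theorem being proved. The correct route applies the exponential lemma directly to the concatenation $[uu']\cup\tilde g_{u'v'}\cup[v'v]$, which avoids $B_X(p_0,D)$ by construction, and no thinness of the quadrilateral is needed at that point.
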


This property is known as geodesic stability.
Mario Bonk proved in 1996 that geodesic stability was, in fact, equivalent to Gromov hyperbolicity (see \cite{Bo}).

\begin{theorem} \label{t:char}
Let $G_1$ be a graph and $B_j:=B_{G_1}(w_j,K_j)$ with $w_j\in V(G_1)$ and $K_j\in \mathbb{Z}^+$, for any $j\in J$, such that $\sup_j K_j=K<\infty$, $\overline B_{j_1}\cap \overline B_{j_2}=\emptyset$ if $j_1\neq j_2$, and every odd cycle $C$ in $G_1$ satisfies $C \cap B_j\neq \emptyset$ for some $j\in J$.
Suppose $\{B_j\}_{j\in J}$ is $M$-regular for some $M>0$.
Let $G_2$ be a non-trivial bounded graph without odd cycles. Then, the following statements are equivalent:

\begin{enumerate}
\item[$(1)$] $G_1\times G_2$ is hyperbolic.

\item[$(2)$]  $G_1$ is hyperbolic and there exists a constant $c_1$, such that for every $k,l\in J$ and $w_k\in B_k$, $w_l\in B_l$ there exists a geodesic $[w_kw_l]$ in $G_1$ with $d_{G_1}(x, \cup_{j\in J}w_j)\le c_1$ for every $x\in [w_kw_l]$.

\item[$(3)$]  $G_1$ is hyperbolic and there exist constants $c_2, \a, \b$, such that for every $k,l\in J$ we have $d_{G_1}(x, \cup_{j\in J}w_j)\le c_2$ for every $x$ in some ($\a,\b$)-quasi-geodesic joining $w_k$ with $w_l$ in $G_1$.
\end{enumerate}
\end{theorem}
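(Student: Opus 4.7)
The plan is to reduce the theorem to Theorem \ref{th:3.1} applied to a suitable auxiliary space, transporting the equivalence back to $G_1 \times G_2$ via the three successive quasi-isometries already established. First, I would prove the key structural identification $G^{*} \cong DG_{1}^{*}$, where the double is taken along the singleton subsets $\eta_j = \{w_j^{*}\}_{j\in J}$. Because every odd cycle of $G_1$ meets some $B_j$, the subgraph $G_1'$ has no odd cycles, hence is bipartite, so (as in the proof of Lemma~\ref{l:P2}) $G_1' \times P_2$ decomposes into two ``copies'' of $G_1'$; adding the extra vertices $w_j^{*}$, each adjacent to $N_j \times V(P_2)$, then identifies the two copies precisely at the $w_j^{*}$'s, which is the definition of $DG_1^{*}$.

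Second, I combine Lemma~\ref{l:P2}, Lemma~\ref{l:2} and Theorem~\ref{invarianza} to conclude that $G_1\times G_2$ is hyperbolic if and only if $G^{*}=DG_1^{*}$ is hyperbolic; similarly, Lemma~\ref{l:1} and Theorem~\ref{invarianza} give $G_1^{*}$ hyperbolic iff $G_1$ hyperbolic. Now Theorem~\ref{th:3.1} applied to $X=G_1^{*}$ with $\eta_j=\{w_j^{*}\}$ yields the equivalence: $DG_1^{*}$ is hyperbolic iff $G_1^{*}$ is hyperbolic and there is a constant $c$ such that every geodesic (equivalently, every quasi-geodesic with fixed constants) from $w_k^{*}$ to $w_l^{*}$ in $G_1^{*}$ stays in the $c$-neighborhood of $\bigcup_{j}\{w_j^{*}\}$.

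Third, I would transfer the geometric condition from $G_1^{*}$ back to $G_1$ using the quasi-isometry $g:G_1\to G_1^{*}$ of Lemma~\ref{l:1}, which satisfies $g(w_j)=w_j^{*}$. The image under $g$ of any geodesic $[w_kw_l] \subset G_1$ is a quasi-geodesic from $w_k^{*}$ to $w_l^{*}$ in $G_1^{*}$, and conversely, any geodesic $[w_k^{*}w_l^{*}]\subset G_1^{*}$ pulls back via a quasi-inverse to a quasi-geodesic from $w_k$ to $w_l$ in $G_1$. The quasi-isometry inequalities then convert bounds of the form $d_{G_1^{*}}(x,\bigcup_j w_j^{*})\le c$ into bounds of the form $d_{G_1}(x,\bigcup_j w_j)\le c'$ (with constants depending only on $K$, $\alpha$, $\beta$), and vice versa. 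This delivers the equivalence (1) $\Leftrightarrow$ (3) of the theorem.

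Finally, (2) $\Leftrightarrow$ (3) is obtained by applying geodesic stability (Theorem~\ref{t:Geodesic-Stability}) inside the hyperbolic graph $G_1$: any $(\alpha,\beta)$-quasi-geodesic joining $w_k$ and $w_l$ is within bounded Hausdorff distance $H=H(\delta(G_1),\alpha,\beta)$ of the geodesic $[w_kw_l]$, so ``points close to $\bigcup_j w_j$'' on a quasi-geodesic is equivalent to the same condition on the geodesic, up to an additive constant. I expect the main obstacle to be the bookkeeping in the third step: carefully propagating the quasi-isometry constants across the three transfers $G_1\times G_2 \to G_1\times P_2 \to G^{*}=DG_1^{*}$ and $G_1 \to G_1^{*}$, and verifying that the condition ``$d(x,\bigcup_j w_j^{*})\le c$ on geodesics of $G_1^{*}$'' really translates to a condition on \emph{every} geodesic of $G_1$ between two $w_k$'s (rather than just quasi-geodesics), which is why condition (3) is invoked as an intermediate step and the stability argument of step four is needed to close the loop.
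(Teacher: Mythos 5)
Your proposal is correct and follows essentially the same route as the paper: the chain of quasi-isometries from Lemmas \ref{l:P2}, \ref{l:1} and \ref{l:2}, the application of Theorem \ref{th:3.1} with $X=G_1^{*}$, $DX=G^{*}$ and $\eta_j=\{w_j^{*}\}$, and geodesic stability (Theorem \ref{t:Geodesic-Stability}) for the equivalence of $(2)$ and $(3)$. If anything, you make explicit two points the paper leaves implicit, namely the identification $G^{*}\cong DG_1^{*}$ via the bipartiteness of $G_1'$ and the careful transfer of the boundedness condition between geodesics of $G_1^{*}$ and geodesics of $G_1$.
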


\begin{proof}
Items $(2)$ and $(3)$ are equivalent by geodesic stability in $G_1$ (see Theorem \ref{t:Geodesic-Stability}).

Assume that $(2)$ holds. By Lemma \ref{l:1}, there exists an $(\a, \b)$-quasi-isometry $f: G_1 \to G_1^{*}$ with $f(w_j)=w_j^{*}$ for every $j\in J$.
Given $k,l\in J, f([w_kw_l])$ is an $(\a,\b)$-quasi-geodesic with endpoints $w_k^{*}$ and $w_l^{*}$ in $G_1^{*}$.
Given $x\in f([w_kw_l])$, we have $x=f(x_0)$ with $x_0\in [w_kw_l]$ and $d_{G_1^{*}}(x,\cup_{j\in J} w_j^{*})\le \a d_{G_1}(x_0, \cup_{j\in J} w_j) + \b\le \a c_1+\b$.
Taking $X=G_1^{*}, DX=G^{*}$ and $\eta_j=w_j^{*}$ for every $j\in J$, Theorem \ref{th:3.1} gives that $G^{*}$ is hyperbolic.
Now, Lemma \ref{l:2} gives that $G_1\times P_2$ is hyperbolic and we conclude that $G_1\times G_2$ is hyperbolic by Lemma \ref{l:P2}.


Now suppose $(1)$ holds. By Lemma \ref{l:P2}, $G_1\times P_2$ is hyperbolic and, by Theorem \ref{t:non-hyperbolic}, $G_1$ is hyperbolic. Then, Lemma \ref{l:2} gives that $G^{*}$ is hyperbolic and taking $X=G_1^{*}, DX=G^{*}$ and $\eta_j=w_j^{*}$ for every $j\in J$, by Theorem \ref{th:3.1}, $(2)$ holds.
\end{proof}

Theorem \ref{t:char} and Remark \ref{r:M-regular} have the following consequence.

\begin{corollary}\label{c:char}
Let $G_1$ be a graph and suppose that there are a positive integer $K$ and a vertex $w\in G_1$, such that every odd cycle in $G_1$ intersects the open ball $B:=B_{G_1}(w,K)$.
Let $G_2$ be a non-trivial bounded graph without odd cycles. Then, $G_1\times G_2$ is hyperbolic if and only if $G_1$ is hyperbolic.
\end{corollary}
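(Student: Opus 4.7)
The plan is to apply Theorem \ref{t:char} with the singleton index set $J=\{1\}$, taking the single ball $B_1:=B=B_{G_1}(w,K)$ and center $w_1:=w$. Under this specialization the geometric condition (2) of Theorem \ref{t:char} becomes trivial, so the corollary will fall out at once.

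First I would dispose of the degenerate case in which $G_1$ has no odd cycles at all: under that hypothesis the conclusion follows directly from Theorem \ref{t:no-odd}. So from here on we may assume that $G_1$ contains at least one odd cycle, which by hypothesis meets $B$.

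Next I would verify that the structural hypotheses of Theorem \ref{t:char} hold for our choice of $J$, $B_1$ and $w_1$. The disjointness condition $\overline{B}_{j_1}\cap\overline{B}_{j_2}=\emptyset$ for $j_1\neq j_2$ is vacuous because $|J|=1$; the covering condition that every odd cycle of $G_1$ meets some $B_j$ is exactly our hypothesis; and since $J$ is finite, Remark \ref{r:M-regular} supplies a constant $M>0$ with $\{B\}$ being $M$-regular.

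Finally, I would check condition (2) of Theorem \ref{t:char}. Since there is a single center, $\bigcup_{j\in J}w_j=\{w\}$, and any two points $w_k,w_l\in B$ satisfy $d_{G_1}(w_k,w),d_{G_1}(w_l,w)\le K$, so any geodesic $[w_kw_l]$ has length at most $2K$. Consequently, for every $x\in [w_kw_l]$, the triangle inequality gives $d_{G_1}(x,\{w\})\le d_{G_1}(x,w_k)+d_{G_1}(w_k,w)\le 3K$. Hence condition (2) holds automatically with $c_1=3K$ whenever $G_1$ is hyperbolic, and the equivalence $(1)\Leftrightarrow(2)$ of Theorem \ref{t:char} yields the desired characterization. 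There is essentially no obstacle beyond this specialization; the corollary is the one-ball case of Theorem \ref{t:char}, where the hard geometric content of (2) collapses because all candidate geodesics are automatically short.
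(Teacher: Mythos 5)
Your proposal is correct and follows exactly the paper's (one-line) argument: the corollary is obtained by specializing Theorem \ref{t:char} to the singleton family $\{B\}$, invoking Remark \ref{r:M-regular} for $M$-regularity, and observing that condition $(2)$ holds trivially since any geodesic between points of $B$ has length at most $2K$ and so stays within $3K$ of $w$. Your explicit treatment of the case where $G_1$ has no odd cycles (via Theorem \ref{t:no-odd}) is a careful addition that the paper leaves implicit.
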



\
  \section{Bounds for the hyperbolicity constant of some direct products}

The following well-known result will be useful (see a proof, e.g., in \cite[Theorem 8]{RSVV}).

\begin{theorem}
\label{t:diameter1}
In any graph $G$ the inequality $\d(G)\le (\diam G )/ 2$ holds.
\end{theorem}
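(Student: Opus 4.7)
The plan is to prove the bound directly from the definition of $\delta(G)$ by estimating the thinness constant of an arbitrary geodesic triangle in terms of $\diam G$. The key observation is that on any geodesic side $[x_1x_2]$, the distance from an interior point $p$ to the nearer endpoint is at most $d(x_1,x_2)/2$, and each endpoint already lies on both of the other two sides.

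First I would fix an arbitrary geodesic triangle $T=\{x_1,x_2,x_3\}$ in $G$ and an arbitrary point $p\in [x_1x_2]$. Since $[x_1x_2]$ is a geodesic parametrized by arc length, the identity $d(p,x_1)+d(p,x_2)=d(x_1,x_2)$ forces
$$
\min\bigl\{d(p,x_1),\,d(p,x_2)\bigr\}\le \tfrac{1}{2}d(x_1,x_2)\le \tfrac{1}{2}\diam G.
$$
Next, I would use the trivial containments $x_1\in [x_3x_1]$ and $x_2\in [x_2x_3]$ to deduce
$$
d\bigl(p,\,[x_2x_3]\cup [x_3x_1]\bigr)\le \min\bigl\{d(p,x_1),\,d(p,x_2)\bigr\}\le \tfrac{1}{2}\diam G.
$$
By symmetry the same bound holds for points on the other two sides, so $\delta(T)\le \tfrac{1}{2}\diam G$.

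Taking the supremum over all geodesic triangles in $G$ yields $\delta(G)\le \tfrac{1}{2}\diam G$. There is essentially no obstacle here; the argument is a one-line consequence of the geodesic property together with the definition of the hyperbolicity constant. The only minor point worth noting is that the statement is vacuous when $G$ is unbounded, in which case $\diam G=\infty$ and the inequality is automatic, so the proof really only needs to address the bounded case, where the above computation applies without change.
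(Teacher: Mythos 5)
Your argument is correct and is exactly the standard one: for $p$ on a side $[x_1x_2]$ of a geodesic triangle, $d(p,[x_2x_3]\cup[x_3x_1])\le d(p,\{x_1,x_2\})\le \tfrac12 d(x_1,x_2)\le \tfrac12\diam G$. The paper does not reproduce a proof but cites \cite[Theorem 8]{RSVV}, whose proof is this same computation, so your proposal matches the intended argument.
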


\begin{remark}\label{r:bipartite}
Note that if $G_1$ is a bipartite graph, then $\diam G_1 = \diam V(G_1)$.
Furthermore, if $G_2$ is a bipartite graph, then the product $G_1\times G_2$ has exactly two connected components, which will be denoted by $(G_1\times G_2)^1$ and $(G_1\times G_2)^2$, where each one is a bipartite graph and, consequently, $\diam (G_1\times G_2)^i = \diam V((G_1\times G_2)^i)$ for $i\in \{1,2\}$.
\end{remark}

\begin{remark}\label{r:path}
Let $P_m, P_n$ be two path graphs with $m\ge n\ge 2$. The product $P_m\times P_n$ has exactly two connected components, which will be denoted by $(P_m\times P_n)^1$ and $(P_m\times P_n)^2$.
If $u, v\in V((P_m\times P_n)^i)$ for $i\in \{1,2\}$, then $d_{(P_m\times P_n)^i}(u,v)= \max\big\{d_{P_m}(\pi_1(u),\pi_1(v)),d_{P_n}(\pi_2(u),\pi_2(v)) \big\}$ and $\diam (P_m\times P_n)^i= \diam V((P_m\times P_n)^i)= m-1$.

Furthermore, if $m_1\le m$ and $n_1\le n$ then $\d(P_m\times P_n)\ge \d(P_{m_1}\times P_{n_1})$.
\end{remark}

\begin{lemma}\label{l:path}
Let $P_m, P_n$ be two path graphs with $m\ge n\ge 3$, and let $\g$ be a geodesic in $P_m\times P_n$ such that there are two different vertices $u,v$ in $\g$, with $\pi_1(u)=\pi_1(v)$. Then, $L(\g)\le n-1$.
\end{lemma}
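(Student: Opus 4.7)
My plan is to exploit the projection $\pi_1\colon P_m\times P_n\to P_m$, together with the explicit distance formula of Remark~\ref{r:path}. Let $a,b$ denote the endpoints of $\g$, so that $L(\g)=d_{P_m\times P_n}(a,b)$. Every edge of $P_m\times P_n$ joins a pair $(i,j)$ to $(i',j')$ with $|i-i'|=|j-j'|=1$, so applying $\pi_1$ edge by edge turns $\g$ into a walk in $P_m$ from $\pi_1(a)$ to $\pi_1(b)$ of length exactly $L(\g)$.

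The hypothesis that two distinct vertices $u\ne v$ of $\g$ share the same first coordinate, $\pi_1(u)=\pi_1(v)$, says precisely that this projected walk revisits the vertex $\pi_1(u)\in V(P_m)$ at two different times; in particular $\pi_1\circ\g$ is \emph{not} a simple path in $P_m$. Since $P_m$ is a path graph (in particular a tree), the unique geodesic between any two of its vertices is the simple path joining them, so any walk between $\pi_1(a)$ and $\pi_1(b)$ of length $d_{P_m}(\pi_1(a),\pi_1(b))$ would be forced to be that simple geodesic. We conclude the strict inequality
\[
L(\g) \;>\; d_{P_m}\bigl(\pi_1(a),\pi_1(b)\bigr).
\]

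To conclude, I would reduce to the case $a,b\in V(P_m\times P_n)$: if an endpoint of $\g$ lies in the interior of an edge, trim at most a half-edge at each end of $\g$; the resulting sub-geodesic still contains $u$ and $v$ and differs in length from $\g$ by at most $1$, so an upper bound of the form $n-1$ on it will propagate. For vertex endpoints, Remark~\ref{r:path} gives
\[
L(\g)=\max\bigl\{d_{P_m}(\pi_1(a),\pi_1(b)),\; d_{P_n}(\pi_2(a),\pi_2(b))\bigr\},
\]
and the strict inequality just established forces the maximum to be attained by the second term, yielding $L(\g)=d_{P_n}(\pi_2(a),\pi_2(b))\le n-1$. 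The only real obstacle I foresee is the bookkeeping needed to reduce to vertex endpoints in order to invoke Remark~\ref{r:path} cleanly; the substance of the proof is the one-line pigeonhole observation that a non-simple walk in a tree cannot be a geodesic.
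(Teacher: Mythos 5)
Your core argument for the case where both endpoints of $\g$ are vertices is correct, and it is a genuinely different and much shorter route than the paper's: the paper argues by contradiction, constructing an auxiliary diagonal geodesic $\s$ through the product and exhibiting a shortcut, whereas you simply observe that $\pi_1\circ\g$ is a non-simple walk in $P_m$, hence of length strictly greater than $d_{P_m}(\pi_1(a),\pi_1(b))$, so the maximum in Remark \ref{r:path} must be realized by the second coordinate and $L(\g)=d_{P_n}(\pi_2(a),\pi_2(b))\le n-1$. That part is clean and complete.

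The gap is in the reduction to vertex endpoints, which you dismiss as bookkeeping but which is where the remaining content of the lemma lives (and it is needed: in Theorem \ref{t:path} the lemma is applied to geodesics whose endpoints may be midpoints of edges). First, the quantitative claim is wrong: the first vertex $V_a$ that $\g$ meets after leaving an endpoint $a$ need not be the vertex of $P_m\times P_n$ nearest to $a$, since $\g$ may exit the edge containing $a$ through its far endpoint; so trimming can remove up to (but not including) a full edge at each end, i.e.\ up to $2$ in total, not $1$. Second, and decisively, no additive loss is affordable: if $[V_aV_b]$ is the subgeodesic between the extreme vertices of $\g$, the bound $L([V_aV_b])\le n-1$ propagates only to $L(\g)< n+1$ (or $L(\g)\le n$ for midpoint endpoints), which is strictly weaker than the asserted $L(\g)\le n-1$. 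To close the gap you must control the two partial edges directly; for instance, writing $\e_a:=d(a,V_a)>0$ and letting $[W_a,V_a]$ be the edge containing $a$, one checks via Remark \ref{r:path} and your vertex-case conclusion $d_{P_m}(\pi_1(V_a),\pi_1(V_b))\le L([V_aV_b])-2$ that if $\pi_2(W_a)$ lay on the near side of $\pi_2(V_a)$ relative to $\pi_2(V_b)$, then $d(W_a,V_b)\le L([V_aV_b])-1$ and exiting through $W_a$ would contradict $d(a,V_b)=\e_a+L([V_aV_b])$; hence $\pi_2$ is injective on all of $\g$ and $L(\g)=d_{P_n}(\pi_2(a),\pi_2(b))\le\diam P_n=n-1$. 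This missing step is precisely the kind of shortcut analysis that the paper's proof carries out, so as written your proposal does not yet prove the stated lemma.
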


\begin{proof}
Let $\g:=[xy]$, and let $V(P_m)=\{v_1, \ldots, v_m\}, V(P_n)=\{w_1, \ldots, w_n\}$ be the sets of vertices in $P_m, P_n$, respectively, such that $[v_j,v_{j+1}]\in E(P_m)$ and $[w_i,w_{i+1}]\in E(P_n)$ for $1\le j<m, 1\le i<n$.
Seeking for a contradiction, assume that $L(\g) > n-1$.
Notice that if $[uv]$ denotes the geodesic contained in $\g$ joining $u$ and $v$, then $\pi_2$ restricted to $[uv]$ is injective.
Consider two vertices $u',v'\in \g$ such that $[uv]\subseteq [u'v']\subseteq \g$, $\pi_2$ is injective in $[u'v']$ and $\pi_2(u')=w_{i_{1}}$, $\pi_2(v')=w_{i_{2}}$ with $i_2-i_1$ maximal under these conditions.
Since $L(\g) > n-1\geq i_2-i_1$,  either there is an edge $[v',w]$ in $G_1\times G_2$ such that $[v',w]\cap (\g \setminus [u'v'])\neq \emptyset$ or there is an edge $[u',w']$ in $G_1\times G_2$ such that $[u',w']\cap (\g \setminus [u'v'])\neq \emptyset$. Also, since $L(\g) > n-1$, notice that $\pi_2$ is not injective in $\g$. Moreover, since $i_2-i_1$ is maximal, if $\pi_2(w)=w_{i_2+1}$, then $w\notin \g$, and since $L(\g) > n-1$, $u'\notin \{x,y\}$ and $\pi_2(w')=w_{i_1+1}$. Thus, either $\pi_2(w)=w_{i_2-1}$ or $\pi_2(w')=w_{i_1+1}$.

Hence, let us assume that there is an edge $[v',w]$ in $G_1\times G_2$ such that $[v',w]\cap (\g \setminus [u'v'])\neq \emptyset$ with $\pi_2(w)=w_{i_2-1}$ (otherwise, if there is an edge $[u',w']$ in $G_1\times G_2$ such that $[u',w']\cap (\g \setminus [u'v'])\neq \emptyset$ with $\pi_2(w')=w_{i_1+1}$, the proof is similar).

Suppose $\pi_1(v')=v_j$. Let $v''$ be the vertex in $[u'v']$ such that $\pi_2(v'')=w_{i_2-1}$.  Then, by construction of $G_1\times G_2$, since $v''\neq w$, it follows that $\{\pi_1(v''),\pi_1(w)\}=\{v_{j-1},v_{j+1}\}$. Therefore, in particular, $1<j<m$.

Assume that $v''=(v_{j-1},w_{i_2-1})$ (if $v''=(v_{j+1},w_{i_2-1})$, then the argument is similar). Therefore, $w=(v_{j+1},w_{i_2-1})$.


Consider the geodesic
$$\s= [(v_{j+1},w_{i_2-1}),(v_j,w_{i_2-2})]\cup [(v_j,w_{i_2-2}),(v_{j-1},w_{i_2-3})]\cup [(v_{j-1},w_{i_2-3}),(v_{j-2},w_{i_2-4})]\cup \ldots$$

Since $\pi_1(u)=\pi_1(v)$, there is a vertex $\xi$ of $V(P_m\times P_n)$ in $[u'v']\cap \s$.
Let $s\in [v',w]\cap \gamma$ with $s\neq v'$. Let $\s_0$ be the geodesic contained in $\s$ joining $\xi$ and $w$. Let $\g_0$ be the geodesic contained in $\g$ joining $\xi$ and $s$. Hence, $L(\s_0\cup [ws])<L(\s_0)+1 < L(\g_0)$ leading to contradiction.
\end{proof}

\begin{theorem}\label{t:path}
Let $P_m, P_n$ be two path graphs with $m\ge n\ge 2$. If $n=2$, then $\d(P_m\times P_2)=0$. If $n\ge 3$, then
\[\min\Big\{\frac{m}{2}\,,\,n -1\Big\} -1\le\d(P_m\times P_n)\le \min\Big\{\frac{m}{2}\,,\,n \Big\} - \frac{1}{2}\,.\]
Furthermore, if $m\le 2n-3$ and $m$ is odd, then $\d(P_m\times P_n)=(m-1)/2$.
\end{theorem}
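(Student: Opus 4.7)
The plan is to address the four assertions separately, in increasing order of difficulty.

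First, when $n=2$, each connected component of $P_m\times P_2$ is a path: every vertex $(v_i,w_j)$ has at most two neighbors (the candidates $(v_{i\pm 1},w_{3-j})$), and by Corollary~\ref{c:conect} there are exactly two components, each with $m$ vertices and exactly two vertices of degree one. Hence each component is a path of length $m-1$, i.e., a tree, so $\d(P_m\times P_2)=0$. For the upper bound $\d\le m/2-1/2$ when $n\ge 3$, I would invoke Remark~\ref{r:bipartite} (each component of $P_m\times P_n$ is bipartite) together with Remark~\ref{r:path} to conclude that the diameter of each component equals $m-1$; Theorem~\ref{t:diameter1} then yields $\d\le (m-1)/2$.

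The harder upper bound is $\d\le n-1/2$. By Theorem~\ref{t:main2}, it suffices to bound $d(p,[x_1x_3]\cup[x_2x_3])$ for an arbitrary geodesic triangle $T=\{x_1,x_2,x_3\}\in\mathbb{T}_v$ with vertices in $V(P_m\times P_n)$ and a point $p\in[x_1x_2]\cap J(P_m\times P_n)$. If $\min\{d(p,x_1),d(p,x_2)\}\le n-1/2$ the bound is immediate, so assume both exceed $n-1/2$; then $L([x_1x_2])>2n-1>n-1$, and Lemma~\ref{l:path} forces $\pi_1$ to be injective on $[x_1x_2]$. Since $P_m$ is itself a path, after relabeling one may assume that $\pi_1(x_3)$ lies between $\pi_1(x_1)$ and $\pi_1(x_2)$, so the connected image $\pi_1([x_1x_3]\cup[x_2x_3])$ covers the whole interval $[\pi_1(x_1),\pi_1(x_2)]$ and in particular contains $\pi_1(p)$. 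Pick a vertex $p'\in[x_1x_2]\cap V(P_m\times P_n)$ with $d(p,p')\le 1/2$, and a vertex $q\in([x_1x_3]\cup[x_2x_3])\cap V(P_m\times P_n)$ with $\pi_1(q)=\pi_1(p')$; since $p'$ and $q$ lie in the same (bipartite) component, $d_{P_n}(\pi_2(p'),\pi_2(q))$ is even, and Proposition~\ref{p:distvert1} combined with the bipartiteness of $P_m$ gives $d(p',q)=d_{P_n}(\pi_2(p'),\pi_2(q))\le n-1$. Therefore $d(p,q)\le 1/2+(n-1)=n-1/2$.

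For the lower bound $\d\ge\min\{m/2,n-1\}-1$, I would build an explicit geodesic triangle in the spirit of Proposition~\ref{p:unbounded}: writing $k=\min\{m,2n-1\}$, the three vertices $(v_1,w_2),(v_k,w_2),(v_2,w_\ell)$ (with $\ell$ chosen so the parities match) bound a triangle whose ``diagonal'' side has its midpoint at distance at least $k/2-1\ge\min\{m/2,n-1\}-1$ from the union of the two zigzag sides, checked via Corollary~\ref{c:lower} exactly as in the proof of Proposition~\ref{p:unbounded}. For the ``furthermore'' clause, the hypotheses $m$ odd and $m\le 2n-3$ give $m/2<n$, so the upper bound reduces to $\d\le (m-1)/2$; a refinement of the previous construction---where the parities of $m-1$ and of the chosen factor walks match up to yield genuine geodesic segments of lengths $m-1,(m-1)/2,(m-1)/2$---produces $\d\ge (m-1)/2$, forcing the equality $\d(P_m\times P_n)=(m-1)/2$.

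The main obstacle is the upper bound $\d\le n-1/2$: combining Lemma~\ref{l:path} with the tree structure of $P_m$ and the bipartite parity of walks, while also correctly handling the fractional $J$-points (midpoints of edges) via the vertex approximation $p'$, is the subtle step. The secondary difficulty is the parity bookkeeping in the lower-bound construction, where one must ensure that the proposed paths really are geodesics of the claimed length rather than walks detoured by the boundary of $P_n$.
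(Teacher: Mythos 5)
Your overall strategy mirrors the paper's (diameter bound for the $m/2-1/2$ part, Lemma~\ref{l:path} for the $n-1/2$ part, explicit geodesic configurations for the lower bound), but there is a genuine gap in the reduction you use for the upper bound $n-1/2$. You invoke Theorem~\ref{t:main2} and bound $d(p,[x_1x_3]\cup[x_2x_3])$ only for triangles $T\in\mathbb{T}_v$. That theorem gives $\d_v(G)\le\d(G)\le 4\d_v(G)+1/2$, so controlling all triangles with vertices in $V(G)$ only yields $\d\le 4(n-1/2)+1/2$, not $\d\le n-1/2$. The correct tool is Theorem~\ref{t:3}: there is a triangle $T\in\mathbb{T}_1(G)$ (vertices in $J(G)$, i.e.\ vertices \emph{or midpoints of edges}) with $\d(T)=\d(G)$, and this is what the paper uses. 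Your subsequent geometric argument --- $L([x_1x_2])>n-1$ forces $\pi_1$ to be injective on the vertices of $[x_1x_2]$ by Lemma~\ref{l:path}, the connected set $\pi_1([x_1x_3]\cup[x_2x_3])$ contains every vertex of $P_m$ between $\pi_1(x_1)$ and $\pi_1(x_2)$, and Remark~\ref{r:path} gives $d(p',q)=d_{P_n}(\pi_2(p'),\pi_2(q))\le n-1$ when $\pi_1(p')=\pi_1(q)$ --- is sound and in fact a cleaner route than the paper's two-case analysis (which splits on whether some vertex of $T$ other than $u$ shares its $\pi_1$-image with $u$); it survives replacing $\mathbb{T}_v$ by $\mathbb{T}_1$ with only cosmetic changes, but as written the conclusion does not follow.

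The lower bound is also too sketchy to accept. The triangle with vertices $(v_1,w_2),(v_k,w_2),(v_2,w_\ell)$ is not verified: a point on the side joining $(v_k,w_2)$ to $(v_2,w_\ell)$ sweeps monotonically through the $P_m$-coordinates between $v_2$ and $v_k$, so it passes close (in the first coordinate) to the midpoint of the long bottom side, and it is not clear where the claimed separation $k/2-1$ comes from. The paper instead uses a geodesic \emph{bigon} whose witness point is the bottom of a diagonal detour $(w_{n-1},v_1)$, with the separation read off from the \emph{second} coordinate via Corollary~\ref{c:lower}; it also needs a separate step for even $m\le 2n-2$ (monotonicity $\d(P_m\times P_n)\ge\d(P_{m-1}\times P_n)$ from Remark~\ref{r:path}), and the ``furthermore'' clause rests on an explicit construction giving exactly $(m-1)/2$ for odd $m\le 2n-3$ --- both of which you defer to an unspecified ``refinement'' and parity bookkeeping. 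These pieces need to be written out before the lower bound and the equality case can be considered proved.
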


\begin{proof}
If $m\ge 2$, then $P_m\times P_2$ has two connected components isomorphic to $P_m$, and $\d(P_m\times P_2)=0$.

Assume that $n\ge 3$.
By symmetry, it suffices to prove the inequalities for $\d((P_m\times P_n)^1)$.
Hence, Theorem \ref{t:diameter1} and Remark \ref{r:path} give $\d((P_m\times P_n)^1)\le \frac{m-1}{2}$.
By Theorem \ref{t:3}, there exists a geodesic triangle $T=\{x,y,z\}\in \mathbb{T}_1(P_m\times P_n)$ with $p\in \g_1:=[xy], \g_2:=[xz], \g_3:=[yz]$, and $\d((P_m\times P_n)^1)=\d(T)=d_{(P_m\times P_n)^1}(p,\g_2\cup \g_3)$. Let $u\in V(\g_1)$ such that $d_{(P_m\times P_n)^1}(p,u)\le 1/2$.

In order to prove $\d((P_m\times P_n)^1)\le n-1/2$, we consider two cases.

Assume first that there is at least a vertex $v\in V((P_m\times P_n)^1)\cap T\setminus \{u\}$ such that $\pi_1(u)=\pi_1(v)$.
If $v\notin \g_1$, then $v\in \g_2\cup \g_3$ and
$$\d(T)= d_{(P_m\times P_n)^1}(p,\g_2\cup \g_3)\le 1/2 + d_{(P_m\times P_n)^1}(u,v)\le n - 1/2.$$
If $v\in \g_1$, then $L(\g_1)\le n-1$ by Lemma \ref{l:path}, and
 $$\d(T)= d_{(P_m\times P_n)^1}(p,\g_2\cup \g_3)\le  d_{(P_m\times P_n)^1}(p,\{x,y\})\le (n-1)/2 < n-1/2.$$

Assume now that there is not a vertex $v\in V((P_m\times P_n)^1)\cap T\setminus \{u\}$ such that $\pi_1(u)=\pi_1(v)$. Then, there exist two different vertices $v_1, v_2$ in $T\setminus \{u\}$ such that $d_{(P_m\times P_n)^1}(u,v_1)= d_{(P_m\times P_n)^1}(u,v_2)=1$, and $\pi_1(v_1)=\pi_1(v_2)$.
If $v_1$ or $v_2$ belongs to $\g_2\cup \g_3$, then $\d(T)=d_{(P_m\times P_n)^1}(p,\g_2\cup \g_3)\le 3/2\le n- 1/2$. Otherwise, $v_1, v_2\in \g_1\setminus \{u\}$.
Lemma \ref{l:path} gives $L(\g_1)\le n-1$, and we have that
$$\d(T)= d_{(P_m\times P_n)^1}(p,\g_2\cup \g_3)\le  d_{(P_m\times P_n)^1}(p,\{x,y\})\le (n-1)/2 < n-1/2.$$

In order to prove the lower bound, denote the vertices of $P_m$ and $P_n$ by
$V(P_m) = \{w_1,w_2,w_3,\ldots,w_m\}$ and $V(P_n) = \{v_1,v_2,v_3,\ldots,v_n\}$, with $[w_i,w_{i+1}]\in E(P_m)$ for $1\le i < m$ and $[v_i,v_{i+1}]\in E(P_n)$ for $1\le i < n$.

Let $(P_m\times P_n)^1$ be the connected component of $P_m\times P_n$ containing $(w_1,v_{n-1})$.

Assume first that $m\ge 2n-3$.
Consider the following curves in $(P_m\times P_n)^1$:
$$
\begin{aligned}
\gamma_1 & :=  [(w_1,v_{n-1}),(w_2,v_n)] \cup [(w_2,v_n),(w_3,v_{n-1})] \cup  [(w_3,v_{n-1}),(w_4,v_n)] \cup  \cdots \cup [(w_{2n-4},v_n),(w_{2n-3},v_{n-1})],
\\
\gamma_2 & :=  [(w_1,v_{n-1}),(w_2,v_{n-2})] \cup [(w_2,v_{n-2}),(w_3,v_{n-3})] \cup \cdots \cup [(w_{n-2},v_2),(w_{n-1},v_1)] \cup [(w_{n-1},v_1),(w_n,v_2)]\\
&\cup  \cdots \cup [(w_{2n-4},v_{n-2}),(w_{2n-3},v_{n-1})].
\end{aligned}
$$
Corollary \ref{c:lower} gives that $\gamma_1, \gamma_2$ are geodesics.
If $B$ is the geodesic bigon $B=\{\g_1,\g_2\}$, then Remark \ref{r:path} gives that
$$
\d(P_m\times P_n)\ge \d(B)\ge d_{(P_m\times P_n)^1}((w_{n-1},v_1),\g_1)=n-2.
$$
\indent
If $m$ is odd with $m\le 2n-3$, then $n-(m+1)/2 \ge 1$ and
we can consider the curves in $(P_m\times P_n)^1$:
$$
\begin{aligned}
\gamma_1 & :=  [(w_1,v_{n-1}),(w_2,v_n)] \cup [(w_2,v_n),(w_3,v_{n-1})] \cup  [(w_3,v_{n-1}),(w_4,v_n)] \cup  \cdots \cup [(w_{m-1},v_n),(w_m,v_{n-1})],
\\
\gamma_2 & :=  [(w_1,v_{n-1}),(w_2,v_{n-2})] \cup [(w_2,v_{n-2}),(w_3,v_{n-3})] \cup \cdots \cup [(w_{(m+1)/2-1},v_{n-(m+1)/2+1}),(w_{(m+1)/2},v_{n-(m+1)/2})] \\
&\cup [(w_{(m+1)/2},v_{n-(m+1)/2}),(w_{(m+1)/2+1},v_{n-(m+1)/2+1})]\cup  \cdots \cup [(w_{m-1},v_{n-2}),(w_m,v_{n-1})].
\end{aligned}
$$
Corollary \ref{c:lower} gives that $\gamma_1, \gamma_2$ are geodesics.
If $B=\{\g_1,\g_2\}$, then
Remark \ref{r:path} gives that
$$\d(P_m\times P_n)\ge \d(B)\ge d_{(P_m\times P_n)^1}((w_{(m+1)/2},v_{n-(m+1)/2}),\g_1)=(m-1)/2.$$

By Remark \ref{r:path}, if $m$ is even with $m-1\le 2n -3$, then we have that
$$\d(P_m\times P_n)\ge \d(P_{m-1}\times P_n)\ge (m-2)/2.$$

Hence,
\[\d(P_m\times P_n)\ge \left\{
\begin{array}{ll}
n-2,\quad &\mbox{if }\ m\ge 2n-3\\
(m-2)/2,\quad &\mbox{if }\ m\le 2n-2
\end{array}
\right\}= \min\Big\{n-2,\frac{m-2}{2}\Big\}=\min\Big\{\frac{m}{2},n-1\Big\}-1.
\]

Furthermore, if $m\le 2n-3$ and $m$ is odd, then we have proved $(m-1)/2\le \d(P_m\times P_n)\le (m-1)/2$.
\end{proof}

\begin{theorem} \label{t:bipartite}
If $G_1$ and $G_2$ are bipartite graphs with $k_1:=\diam V(G_1)$ and $k_2:=\diam V(G_2)$ such that $k_1\ge k_2\ge 1$, then
$$\max\Big\{\min\Big\{\frac{k_1-1}{2}, k_2-1\Big\},\d(G_1),\d(G_2)\Big\}\le \d(G_1\times G_2)\le \frac{k_1}{2}.$$
Furthermore, if $k_1\le 2k_2-2$ and $k_1$ is even, then $\d(G_1\times G_2)=k_1/2$.
\end{theorem}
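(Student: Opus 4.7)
The plan is to deduce the upper bound $\delta(G_1\times G_2)\le k_1/2$ from a diameter estimate, and to obtain the two lower bounds and the equality statement from isometric-subgraph embeddings combined with Theorem \ref{t:path}.

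For the upper bound, bipartiteness of both factors makes walks in each $G_i$ share a common parity, so two vertices $(u,v),(u',v')$ lie in the same component of $G_1\times G_2$ if and only if $d_{G_1}(u,u')$ and $d_{G_2}(v,v')$ have equal parity; the ``furthermore'' part of Corollary \ref{c:lower} then yields $d_{G_1\times G_2}((u,v),(u',v'))=\max\{d_{G_1}(u,u'),d_{G_2}(v,v')\}\le k_1$. Remark \ref{r:bipartite} promotes this vertex-diameter bound to a bound on the full diameter of each component, and Theorem \ref{t:diameter1} concludes that $\delta((G_1\times G_2)^i)\le k_1/2$.

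For $\delta(G_1)\le \delta(G_1\times G_2)$ (and symmetrically for $G_2$), I would fix an edge $[v_0,v_0']\in E(G_2)$ and a base vertex $w_\ast\in V(G_1)$ and define $i\colon G_1\to G_1\times G_2$ by $i(w)=(w,v_0)$ or $(w,v_0')$ according to the parity of $d_{G_1}(w_\ast,w)$, extending over each edge $[w,w']\in E(G_1)$ via the product edge $[i(w),i(w')]$, which is available because the endpoints land in opposite layers by bipartiteness of $G_1$ and $[v_0,v_0']\in E(G_2)$. Proposition \ref{p:distvert1} together with the matching walk-length sets in bipartite graphs give $d_{G_1\times G_2}(i(w),i(w'))=d_{G_1}(w,w')$ on vertices, so $i$ realizes $G_1$ as an isometric subgraph of one connected component of $G_1\times G_2$, and Lemma \ref{l:subgraph} supplies the inequality.

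For the remaining lower bound and the ``furthermore'' statement, I would pick realizing geodesics $P\subseteq G_1$ and $P'\subseteq G_2$ of lengths $k_1$ and $k_2$; as geodesics each is automatically an isometric subgraph and is isomorphic to $P_{k_1+1},P_{k_2+1}$ respectively. The crucial step is to verify that $P\times P'$ is an isometric subgraph of $G_1\times G_2$: by Proposition \ref{p:distvert1} both distances equal the smallest $n$ simultaneously appearing in the two factor walk-length sets, and in any bipartite graph such a set takes the form $\{d(u,u')+2k:k\ge 0\}$, so these sets coincide between $P$ and $G_1$ (resp.\ $P'$ and $G_2$) precisely because $P,P'$ are isometric. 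Lemma \ref{l:subgraph} then transports the estimates of Theorem \ref{t:path} with $m=k_1+1$ and $n=k_2+1$ up to $G_1\times G_2$, yielding the bound $\min\{(k_1-1)/2,k_2-1\}$ and, when $k_1$ is even and $k_1\le 2k_2-2$ (equivalently $m$ odd and $m\le 2n-3$), the equality $\delta(P_{k_1+1}\times P_{k_2+1})=k_1/2$, which matches the upper bound and forces $\delta(G_1\times G_2)=k_1/2$. I expect the main obstacle to be precisely this isometric-subgraph verification for $P\times P'$, since it is where both bipartiteness hypotheses enter in an essential way through the parity of walk lengths.
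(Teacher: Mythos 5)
Your proposal is correct and follows essentially the same route as the paper: the upper bound via the parity/diameter argument (Corollary \ref{c:lower}, Remark \ref{r:bipartite}, Theorem \ref{t:diameter1}), the lower bounds via Lemma \ref{l:subgraph} applied to the isometric subgraph $P_{k_1+1}\times P_{k_2+1}$ together with Theorem \ref{t:path}, and, for $\d(G_i)$, an isometric copy of $G_i$ inside one component (which the paper realizes as a component of $G_1\times P_2\subseteq G_1\times G_2$ rather than by your explicit parity-indexed embedding, but this is the same map). Your walk-length verification that $P_{k_1+1}\times P_{k_2+1}$ is isometric in $G_1\times G_2$ correctly supplies the detail the paper dismisses as ``easy to check.''
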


\begin{proof}
Corollary \ref{c:lower}, Theorem \ref{t:diameter1} and Remark \ref{r:bipartite} give us the upper bound.

In order to prove the lower bound, we can see that there exist two path graphs $P_{k_1+1}, P_{k_2+1}$ which are isometric subgraphs of $G_1$ and $G_2$, respectively.
It is easy to check that $P_{k_1+1}\times P_{k_2+1}$ is an isometric subgraph of $G_1\times G_2$.
By Lemma \ref{l:subgraph} and Theorem \ref{t:path}, we have
$$
\min\Big\{\frac{k_1-1}{2}, k_2-1\Big\}\le \d(P_{k_1+1}\times P_{k_2+1})\le \d(G_1\times G_2).
$$
\indent
Using a similar argument as above, we have $\d(P_2\times G_2)\le \d(G_1\times G_2)$ and $\d(G_1\times P_2)\le \d(G_1\times G_2)$.
Thus, since $(G_1\times P_2)^i \simeq G_1$ and $(P_2\times G_2)^i \simeq G_2$ for $i\in \{1,2\}$, we obtain the first statement.

Furthermore, if $k_1+1\le 2(k_2+1)-3$ and $k_1+1$ is odd, then Theorem \ref{t:path} gives $\d(P_{k_1+1}\times P_{k_2+1})=k_1/2$,
and we conclude $\d(G_1\times G_2)=k_1/2$.
\end{proof}

The following result deals just with odd cycles since otherwise we can apply Theorem \ref{t:bipartite}.

\begin{theorem}\label{t:CmxPn}
For every odd number $m\ge3$ and every $n\ge2$,
\[
\d(C_m\times P_n)=\left\{
\begin{array}{ll}
m/2 ,\quad &\mbox{if }\ n-1\le m,\\
(n-1)/2,\quad &\mbox{if }\ m < n-1 < 2m,\\
m-1/2,\quad &\mbox{if }\ n-1 \ge 2m.
\end{array}
\right.
\]
\end{theorem}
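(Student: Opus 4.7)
The plan is to prove Theorem \ref{t:CmxPn} by case analysis on $n-1$ versus $m$, after two preliminary observations. First, $C_m\times P_n$ is hyperbolic by Theorem \ref{t:G2odd} applied with the roles of $G_1,G_2$ exchanged (since $P_n$ is a tree and thus hyperbolic, and $C_m$ is a non-trivial bounded graph with an odd cycle). Second, using Proposition \ref{p:distvert1} together with the fact that $C_m$ (with $m$ odd) admits walks between any two vertices $w_i,w_{i'}$ of every sufficiently large length of each parity, one derives the distance formula
\[
d_{C_m\times P_n}\bigl((w_i,v_j),(w_{i'},v_{j'})\bigr)=
\begin{cases}
\max\{d_{C_m}(w_i,w_{i'}),|j-j'|\}&\text{if parities agree},\\
\max\{m-d_{C_m}(w_i,w_{i'}),|j-j'|\}&\text{otherwise,}
\end{cases}
\]
where parities refer to $d_{C_m}(w_i,w_{i'})$ and $|j-j'|$. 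Since every closed walk in $C_m\times P_n$ projects to an even closed walk in $P_n$, the graph $C_m\times P_n$ is bipartite, so by Remark \ref{r:bipartite} $\diam(C_m\times P_n)=\diam V(C_m\times P_n)$; a direct maximization of the distance formula gives that this diameter equals $m$ when $n-1\le m$ and $n-1$ otherwise.

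In Case 1 ($n-1\le m$) the upper bound $\d\le m/2$ is immediate from Theorem \ref{t:diameter1}. For the lower bound, embedding $V(P_2)$ into $V(P_n)$ as the endpoints of any edge yields $C_m\times P_2$ as an isometric subgraph of $C_m\times P_n$ (distances coincide by matching available walk parities), and $C_m\times P_2\simeq C_{2m}$ since $m$ is odd; Lemma \ref{l:subgraph} together with $\d(C_{2m})=m/2$ then gives $\d\ge m/2$. In Case 2 ($m<n-1<2m$) Theorem \ref{t:diameter1} again gives $\d\le(n-1)/2$, while for the lower bound one constructs a geodesic bigon between $x=(w_0,v_1)$ and $y=(w_0,v_n)$ whose two length-$(n-1)$ geodesics $\gamma_1,\gamma_2$ have monotonically increasing $\pi_2$-coordinate and zigzag in $\pi_1$ peaking at $w_{(n-1)/2}$ and $w_{-(n-1)/2}$ respectively (they remain geodesic because $(n-1)/2<m$). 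The midpoint $(w_{(n-1)/2},v_{(n+1)/2})$ of $\gamma_1$ then lies at distance $(n-1)/2$ from $\gamma_2$ by the distance formula.

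For Case 3 ($n-1\ge 2m$), the lower bound is achieved by a geodesic bigon between $x=(w_0,v_1)$ and $y=(w_0,v_{2m})$; by parity $d(x,y)=2m-1$, and one exhibits two length-$(2m-1)$ geodesics $\gamma_1,\gamma_2$ whose $\pi_2$-coordinate increases monotonically while their $\pi_1$-walks of length $2m-1$ in $C_m$ are chosen to share no interior vertex of $C_m\times P_n$. Since $2m-1$ is odd, the midpoint of $\gamma_1$ is the midpoint of an edge, and the distance formula gives that it lies at distance exactly $m-1/2$ from $\gamma_2$, yielding $\d\ge m-1/2$.

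The main obstacle is the upper bound in Case 3, because the diameter-based bound gives only $(n-1)/2$, which exceeds $m-1/2$ as soon as $n-1>2m-1$. The plan is to apply Theorem \ref{t:main} to reduce to triangles $T\in\mathbb{T}_1(C_m\times P_n)$ that are cycles with vertices in $J(C_m\times P_n)$, and for $p\in[xy]\cap J$ realizing $\d(T)$, exploit that $\pi_2:C_m\times P_n\to P_n$ lands in a tree so that the image tripod $\pi_2(T)$ is degenerate: in particular there exists $q\in[xz]\cup[yz]$ with $\pi_2(q)=\pi_2(p)$. One then bounds $d(p,q)$ by splitting into cases according as $p$ and $q$ are vertices or midpoints of edges, using the distance formula in each same-fiber configuration. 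The technical heart is to rule out, via the hypothesis that $[xz]$ and $[yz]$ are actually geodesics, the extremal same-fiber midpoint configurations that would otherwise permit $d(p,q)=m$; the remaining configurations all satisfy $d(p,q)\le m-1/2$, which together with $\d$ being a multiple of $1/4$ (Theorem \ref{l:cuartos}) pins $\d(T)$ at $m-1/2$.
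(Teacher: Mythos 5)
Your setup (the distance formula from Proposition \ref{p:distvert1}, the diameter computation, the upper bounds via Theorem \ref{t:diameter1}), your Case 1 via the isometric copy of $C_m\times P_2\simeq C_{2m}$, and your outline of the Case 3 upper bound all match the paper's proof in substance. The genuine problem is with both of your lower-bound constructions, which use bigons with the same two endpoints; the paper instead uses a geodesic \emph{triangle} whose two long sides end at \emph{different} vertices, and this difference is not cosmetic. For Case 2, take $m=5$, $n=9$: your $\gamma_1$ peaks at $(w_4,v_5)$ and your $\gamma_2$ at $(w_{-4},v_5)=(w_1,v_5)$. These two points sit at the same height with $d_{C_5}(w_4,w_1)=2$, which is \emph{even}, so by your own distance formula their product distance is $\max\{2,0\}=2$, not $(n-1)/2=4$. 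The large distance $m-d_{C_m}$ between same-height points is only available when the $C_m$-distance is odd, and two symmetric tents reuniting at the same endpoints generically produce an even $C_m$-distance at matching heights. The paper's $\g_2,\g_3$ avoid this by oscillating out of phase between the two adjacent vertices $w_1,w_m$, so that same-height points are at $C_m$-distance $1$ and hence product distance $m-1$.

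The Case 3 bigon fails for a structural reason once $m\ge 5$. Any two distinct vertices at the same height are at product distance at most $m-1$, so to get $d(M,\gamma_2)=m-1/2$ at the midpoint $M$ of an edge $[A,B]$ of $\gamma_1$ you need \emph{every} vertex $q$ of $\gamma_2$ to satisfy $d(A,q)\ge m-1$ and $d(B,q)\ge m-1$. Working through the distance formula, this forces $\pi_1(\gamma_2)$ to alternate between $\pi_1(A)$ and $\pi_1(B)$ at all of the $2m-2$ heights within $m-2$ of the edge, i.e.\ $2m-4$ of the $2m-1$ steps of $\gamma_2$ are pinned to a single edge of $C_m$. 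But $\pi_1(\gamma_2)$ must be a closed walk of odd length $2m-1$ in $C_m$, hence must wind around the cycle, which needs at least $m$ free steps; only $3$ remain, so the required $\gamma_2$ exists only for $m=3$. (No other point of $\gamma_1$ can save you: every interior vertex of $\gamma_1$ is within $m-1$ of the same-height vertex of $\gamma_2$.) Since the paper's two long sides terminate at the distinct vertices $(w_1,v_n)$ and $(w_m,v_n)$, they never have to close up in $C_m$ and can sustain the out-of-phase oscillation indefinitely, which is what delivers $d(N,\g_2)=m-1/2$. You should replace both bigons with configurations of this type (or with the paper's explicit triangle).
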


\begin{proof}
Let $V(C_m)=\{w_1,\ldots,w_m\}$ and $V(P_n)=\{v_1,\ldots,v_n\}$ be the sets of vertices in $C_m$ and $P_n$, respectively, such that $[w_1,w_{m}],[w_j,w_{j+1}]\in E(C_m)$ and $[v_i, v_{i+1}]\in E(P_n)$ for $j\in\{1,\ldots,m-1\}$, $i\in\{1,\ldots,n-1\}$.
Note that for $1\le j,r\le m$ and $1\le i,s\le n,$ we have $d_{C_m\times P_n}\big( (w_j,v_i), (w_r,v_s)\big)=\max\{ |i-s|,|j-r| \}$, if $|i-s|\equiv |j-r| (\text{mod } 2)$, or $d_{C_m\times P_n}\big( (w_j,v_i), (w_r,v_s)\big)=\max\{ |i-s|,m-|j-r| \}$, if $|i-s|\not\equiv |j-r| (\text{mod } 2)$.
Besides, we have $\diam(C_m\times P_n)=\diam\big(V(C_m\times P_n)\big)$, i.e., $\diam(C_m\times P_n)=m$ if $n-1\le m$, and $\diam(C_m\times P_n)=n-1$ if $n-1> m$.
Thus, by Theorem \ref{t:diameter1} we have
\[
\d(C_m\times P_n)\le \left\{
\begin{array}{ll}
m/2 ,\quad &\mbox{if }\ n-1\le m,\\
(n-1)/2,\quad &\mbox{if }\ n-1 > m.
\end{array}
\right.
\]

Assume first that $n-1\le m$. Note that $C_m\times P_2 \simeq C_{2m}$ and $C_m\times P_{n^\prime}$ is an isometric subgraph of $C_m\times P_n$, if $n^\prime\le n$.
By Lemma \ref{l:subgraph}, we have $\d(C_m\times P_n)\ge \d(C_{2m})=m/2$, and we obtain the result in this case.

\smallskip

Assume now that $n-1>m$. Consider the geodesic triangle $T$ in $C_m\times P_n$ defined by the following geodesics
\[
\begin{aligned}
\g_1 := &[(w_{1},v_n),(w_{2},v_{n-1})]\cup[(w_{2},v_{n-1}),(w_{3},v_{n})]\cup[(w_{3},v_{n}),(w_{4},v_{n-1})]\cup\ldots\cup[(w_{m-1},v_{n-1}),(w_{m},v_{n})], \\
\g_2 := & [(w_{(m+1)/2},v_1),(w_{(m-1)/2},v_{2})]\cup[(w_{(m-1)/2},v_{2}),(w_{(m-3)/2},v_{3})]\cup\ldots\cup[(w_{2},v_{(m-1)/2}),(w_{1},v_{(m+1)/2})]\cup \\
& [(w_1,v_{(m+1)/2}),(w_m,v_{(m+3)/2})]\cup[(w_m,v_{(m+3)/2}),(w_1,v_{(m+5)/2})]\cup[(w_1,v_{(m+5)/2}),(w_m,v_{(m+7)/2})]\cup\ldots, \\
\g_3 := & [(w_{(m+1)/2},v_1),(w_{(m+3)/2},v_{2})]\cup[(w_{(m+3)/2},v_{2}),(w_{(m+5)/2},v_{3})]\cup\ldots\cup[(w_{m-1},v_{(m-1)/2}),(w_{m},v_{(m+1)/2})]\cup \\
& [(w_m,v_{(m+1)/2}),(w_1,v_{(m+3)/2})]\cup[(w_1,v_{(m+3)/2}),(w_m,v_{(m+5)/2})]\cup[(w_m,v_{(m+5)/2}),(w_1,v_{(m+7)/2})]\cup\ldots ,
\end{aligned}
\]
where $(w_1,v_n)$ \big(respectively, $(w_m,v_n)$\big) is an endpoint of either $\g_2$ or $\g_3$, depending of the parity of $n$.
Since $T$ is a geodesic triangle in $C_m\times P_n$, we have $\d(C_m\times P_n)\ge \d(T)$.
If $n-1 < 2m$ and $M$ is the midpoint of the geodesic $\g_3$,
then $\d(C_m\times P_n)\ge \d(T)= d_{C_m\times P_n}(M,\g_1\cup\g_2)= L(\g_3)/2=(n-1)/2$. Therefore, the result for $m<n-1<2m$ follows.

\smallskip

Finally, assume that $n-1\ge 2m$. Let us consider $N\in\g_3$ such that $d_{C_m\times P_n}\big( N,(w_{(m+1)/2},v_1) \big)=m-1/2$.
Thus, $\d(C_m\times P_n)\ge \d(T) \ge d_{C_m\times P_n}(N, \g_1\cup\g_2)=d_{C_m\times P_n}\big( N,(w_{(m+1)/2},v_1) \big)=m-1/2$.
In order to finish the proof, it suffices to prove that $\d(C_m\times P_n)\le m-1/2$.
Seeking for a contradiction, assume that $\d(C_m\times P_n)>m-1/2$.
By Theorems \ref{l:cuartos} and \ref{t:3}, there is a geodesic triangle $\triangle=\{x,y,z\}\in \mathbb{T}_1(C_m\times P_n)$ and $p\in[xy]$ with 
$d_{C_m\times P_n}(p, [yz]\cup[zx]) = \d(C_m\times P_n)\geq m - 1/4$.
Then, $L([xy])= d_{C_m\times P_n}(x,p) + d_{C_m\times P_n}(p,y) \geq 2m-1/2$.
Let $V_x$ (respectively, $V_y$) be the closest vertex to $x$ (respectively, $y$) in $[xy]$, and consider a vertex $V_p$ in $[xy]$ such that $d_{C_m\times P_n}\big(p, V(C_m\times P_n)\big)=d_{C_m\times P_n}(p, V_p)$.
Note that $d_{C_m\times P_n}(p, [yz]\cup[zx]) \geq m - 1/4$ implies that $d_{C_m\times P_n}(p,V_p) \le 1/2$.
Since $x,y,z \in J(C_m\times P_n)$, we have $d_{C_m\times P_n}(V_x,V_y)\geq 2m-1 > m$ and, consequently, $\pi_{2}([xy])$ is a geodesic in $P_n$.
Since $\pi_{2}([yz]\cup [zx])$ is a path in $P_n$ joining $\pi_{2}(x)$ and $\pi_{2}(y)$, there exists a vertex $(u,v)\in [xz]\cup[zy]$ such that $\pi_{2}(V_p)=v$ and $u\neq\pi_{1}(V_p)$.
Therefore, $d_{C_m\times P_n}\big(V_p,(u,v)\big) \le m-1$ and, consequently, $d_{C_m\times P_n}(p,[xz]\cup[zy])\le d_{C_m\times P_n}(p,V_p) + d_{C_m\times P_n}(V_p,[xz]\cup[zy])\le 1/2 + m-1$, leading to contradiction.
\end{proof}


\end{document}